\numberwithin{equation}{section}
 \newtheorem{assumption}{Assumption}[section]
\newtheorem{lemma}{Lemma}[section]
\newtheorem{theorem}{Theorem}[section]
\newtheorem{coro}{Corollary}[section]
\newtheorem{propo}{Proposition}[section]
\xpatchcmd{\@thm}{\thm@headpunct{.}}{\thm@headpunct{}}{}{}
\def\E{\mathbb{E}}
\def\P{\mathbb{P}}
\def\R{\mathbb{R}}
\newcommand{\eeq}{\end{equation}}
\newcommand{\eeqa}{\end{eqnarray}}
\newcommand{\baa}{\begin{eqnarray*}}
\newcommand{\eaa}{\end{eqnarray*}}
\newcommand{\ttl}{\Large Spatial SIR epidemic model with varying infectivity  in an unbounded domain: Law of Large Numbers  }
\begin{document}

\title[]{\ttl}

\author[Armand \ Kanga]{Armand Kanga$^1$}
\address{$^1$Laboratory of Applied Mathematics and Computer Science, Universit{\'e} Felix Houphou{\"e}t-Boigny,  Abidjan, Ivory Coast and Aix--Marseille Universit{\'e}, CNRS, I2M, Marseille, France }
\email{kangayaoarmand@gmail.com}
\author[{\'E}tienne \ Pardoux]{{\'E}tienne Pardoux$^2$}
\address{$^2$Aix--Marseille Universit{\'e}, CNRS, I2M, Marseille, France}
\email{etienne.pardoux@univ-amu.fr}

\date{\today}

%
%

\maketitle


\allowdisplaybreaks

%
%

\begin{abstract} 
	We consider a spatial SIR epidemic model where the infectivity of infected individuals depends upon their age of infection, and infections are non local. The domain is an unbounded
	subset of $\R^d$,
	and the individuals do not move. We extend our earlier result in \cite{AK-EP}, where the domain was bounded, and prove a law of large numbers as the size of the population tends to $\infty$.
	\end{abstract}
\section{Introduction}
Epidemic models using ordinary differential equations have been the subject of much research in recent years. Anderson and Britton \cite{HA-TB}, Britton and Pardoux \cite{TB-EP} have shown that these models are limits, when the population size tends towards infinity, of a  stochastic Markovian models. In particular, the Markovian nature of these models implies  that the duration of infection is exponentially distributed, which is unrealistic for most epidemics.\\
As a result, models with non-exponential infection durations have attracted some interest, see  in particular \cite{Rienert} and \cite{Wang}. Kermack and McKendrick \cite{WO-AG} proposed that infectivity should vary with the time since infection. The duration of infection is the time taken by this function to vanish out definitively; its law is completely arbitrary. In \cite{FPP}, the authors have established the law of large numbers for the SIR model with variable infectivity, where the infectivity varies from one individual to another and depends upon the time elapsed since infection. They assume that the infectivity function has a finite number of jumps, and satisfy an assumption of uniform continuity between jumps. In \cite{RF-GP-EP}, the same law of large numbers is established under a much weaker assumption: infectivity functions have their trajectories in $\mathfrak{D}(\mathbb{R},\mathbb{R}_+)$, and are bounded by a constant. However, in the various models studied above, the authors ignore the fact that a population extends over a spatial region. Yet, spatial heterogeneity, habitat connectivity and movement rates play an important role in the evolution of infectious diseases. Both deterministic and stochastic models have been used to understand the importance the geographic dispersion and of the movement of individuals in a population on the spread of infectious diseases, on the persistence or extinction of an endemic disease, for example \cite{LJ-BM}, \cite{Kaj} and \cite{MN-EP-TY}. Some Markovian models in this framework have been studied in \cite{SB-AE-EP}. They studied a stochastic SIR compartmental epidemic model for a population which moves on a torus $(\mathbb{T}^2 = \mathbb{R}^2/\mathbb{Z}^2)$ according to Stochastic Differential Equations driven by independent Brownian motions. They define sequences of empirical measures that describe the evolution of the positions of susceptible, infected and recovered individuals.  They establish large-population approximations of these sequences of  measures. In \cite{YV-MH-EP}, the authors consider a population distributed in the space $\mathbb{R}^d$ whose individuals are characterized by: a position and an infection state, interact and move in $\mathbb{R}^d.$ An epidemic model combining spatial structure and variable infectivity would be more realistic. \\
In a previous work, we studied a no-movement model describing a population distributed within a bounded domain $D\subset \R^d$ \cite{AK-EP} . In this framework, individuals remain fixed at their spatial locations and interact locally according to their epidemiological states (susceptible, infected, or recovered). The assumption that $D$ is bounded simplifies the mathematical analysis and facilitates the derivation of convergence results toward a deterministic limit as the population size tends to infinity.\\
However, this boundedness assumption restricts the applicability of the model to spatially confined configurations and fails to account for systems where the population is distributed over a large or unbounded spatial domain.\\
In the present paper, we extend this framework to the case of an unbounded domain 
$D\subset \R^d$, while maintaining the no-movement assumption.
This extension raises several analytical and probabilistic challenges. We therefore develop refined tools to establish convergence results in this unbounded setting. We introduce sequences of empirical measures describing the spatial distribution and temporal evolution of the susceptible, infected, and recovered individuals.
We then prove a law of large numbers for these measures, characterizing the deterministic limiting system that governs the macroscopic evolution of the population as the number of individuals tends to infinity.
This result extends those obtained in the bounded setting and shows that the model remains well-posed in unbounded spatial domains.
The case where the individuals follow random movements will be considered in another publication
\subsection{Notations}
We note
\begin{itemize}
        \item[•] For $x\in\R^d$, $u>0$, $B(x,u)$ denotes the ball of radius $u$ centered at $x$;
	\item[•] $\mathcal{M}:$ The set of finite measures  on $D$ which we equip with the weak convergence topology;
	\item[•] $\mathfrak{D}:=\mathfrak{D}(\mathbb{R},\mathbb{R}_+):$ The space of càdlàg functions  defined on $\mathbb{R}_+$  with values in  $\mathbb{R}_+;$
	\item[•] $\mathfrak{D}_{\mathcal{M}}:$ The space of càdlàg functions  defined on $\mathbb{R}_+$  with values in $\mathcal{M}.$
	\item[•]$\forall \varphi \in C_b(D)$,  $\forall \mu \in \mathcal{M}, \mu(\varphi)=(\mu,\varphi)= \displaystyle \int_{D} \varphi(x) \mu(dx);$
	\item[•] $c$ and $C$ denote positive constants that can change from line to line.
\end{itemize}
\section{Model description}
\label{k}
The epidemic model studied here is the SIR model in a spatial framework with variable infectivity; the letters S, I and R represent the different states of an individual ("susceptible","infected" and "recovered" respectively). The SIR model states that a  susceptible individual can become infected, and finally  recovered when he/she recovers from the disease and is immune to the disease for ever. In our spatial model, an individual is characterized by its state $E\in \left\{ S, I, R \right\} $ and its position $X$, a continuous variable with values in $D$ which is a closed subset of $\R^d$. To simplify the mathematical description, we identify the $S$, $I$ and $R$ states as $0$, $1$ and $2$ respectively. The  space of individuals is therefore $D\times \left\{ 0, 1, 2 \right\} $. We consider a population of fixed size N; and we assume that at time t=0 the population is divided into three subsets: those susceptible , there are $S^N(0)$ of them, those infected, there are $I^N(0)$ of them, and those removered, there are $R^N(0)$ of them i.e $S^N(0)+I^N(0)+R^N(0)= N.$ At each time, the  individuals occup the positions $X^1, X^2, X^3, \cdots, X^N$ i.e the positions do not change over time.\\
 We denote by,  $\mathfrak{S}_N:=\left\{ i=1,\cdots,N; E^i_0=0\right\}$ the set of  initially susceptible individuals\\  $\left(Card(\mathfrak{S}_N)=S^N(0)\right)$; $\mathfrak{I}_N:=\left\{ i=1,\cdots,N; E^i_0=1\right\}$ the set of  initially infected individuals\\ $\left(Card(\mathfrak{I}_N)=I^N(0)\right)$; and $\mathfrak{R}_N:=\left\{ i=1,\cdots,N; E^i_0=2\right\}$ the set of  initially recovered individuals\\ $\left(Card(\mathfrak{R}_N)=R^N(0)\right)$; $(\mathfrak{S}_N,\mathfrak{I}_N,\mathfrak{R}_N)$ forms a partition of the set $\{1, 2, \cdots, N\}$.
Now let us consider $\{\lambda_{-j} , j\geq 1\}$ and $\{\lambda_{j} , j\geq 1\}$ two mutually independent sequences of  i.i.d random elements  of $\mathfrak{D}.$ $\lambda_{-j}(t)$ is the infectivity at time $t$ of the $j-th$  initially infected individual and $\lambda_{j}(t)$ is the infectivity at time $t$ after its infection of the $j-th$ infected individual among the initially susceptible individuals. We assume that there exists a deterministic constant $\lambda^*>0$ such that $0\leq\lambda_{j}(t)\leq \lambda^*$ a.s, for all $j\in \mathbb{Z}^* \text{ and } t\geq 0$, with the convention: $\forall j \in \mathbb{Z}^*$, $\lambda_{j}(t) = 0 \text{ if } t < 0$ and we shall use the notations $\overline{\lambda}^0(t)= \mathbb{E}(\lambda_{-1}(t))$ and $\overline{\lambda}(t)= \mathbb{E}(\lambda_{1}(t))$. It is natural that an infected individual is more likely to infect a close neighbor than a more distant one. While these different transmission behaviors are averaged in homogeneous SIR models, in our model we use an infection rate that depends on the relative positions of the two parties. The infection rate between two positions will be given by the  symmetric function $K$ defined on $D \times D$ with values in $ \R_+$. A susceptible individual $i$ becomes infected (in other words, his/her state changes from 0 to 1) at time t at rate
\begin{equation}
\label{par1eq1}
\begin{aligned}
\frac{1}{N^{1-\gamma}} \left[\displaystyle  \sum_{j\in\mathfrak{I}_N} \frac{K(X^{i},X^{j})}{\left[\displaystyle \sum_{\ell=1}^N K(X^{\ell},X^{j})\right]^{\gamma}} \lambda_{-j}(t)+ \sum_{j\in\mathfrak{S}_N} \frac{K(X^{i},X^{j})}{\left[\displaystyle \sum_{\ell=1}^N K(X^{\ell},X^{j})\right]^{\gamma}} \lambda_{j}(t- \tau^N(j))\right]
\end{aligned}
\end{equation}
where $\tau^N(j)$ is the infection time of the initially susceptible individual $j$ .\\
\begin{itemize}
\item If $\gamma=1$, the rate  becomes 
\begin{align*}
\displaystyle   \sum_{j\in\mathfrak{I}_N} \frac{K(X^{i},X^{j})}{\displaystyle \sum_{\ell=1}^N K(X^{\ell},X^{j})} \lambda_{-j}(t)+ \sum_{j\in\mathfrak{S}_N} \frac{K(X^{i},X^{j})}{\displaystyle \sum_{\ell=1}^N K(X^{\ell},X^{j})} \lambda_{j}(t- \tau^N(j))
\end{align*}
The interactions are normalized by the sum of the intensities of all contributions received by each individual. Thus, the total quantity affecting an individual $i$ corresponds to a local average rate, depending only on its neighbors and their relative influences. In this case, the rate is not directly influenced by  the total population size.
\item $\gamma=0$  the rate  becomes
\begin{align*}
\frac{1}{N} \left[\displaystyle  \sum_{j\in\mathfrak{I}_N} K(X^{i},X^{j}) \lambda_{-j}(t)+ \sum_{j\in\mathfrak{S}_N} K(X^{i},X^{j})\lambda_{j}(t- \tau^N(j))
\right]
\end{align*}
The interactions are not normalized by the sum of contributions. Each individual directly influences the others, and the total effect on an individual $i$ is averaged over the entire population through the factor $1/N$. Thus, the resulting quantity represents a global average rate, reflecting the collective influence of all individuals.
\end{itemize}
Note also that \eqref{par1eq1} can be rewritten as follows:
\begin{align*}
\displaystyle \sum_{j\in\mathfrak{J}_N}\!\!\left(\frac{\displaystyle\sum_{\ell=1}^NK(X^\ell,X^j)}{N}\right)^{1-\gamma}\!\!\!\frac{K(X^i,X^j)}{\displaystyle\sum_{\ell=1}^NK(X^\ell,X^j)}\lambda_{-j}(t)+\sum_{j\in\mathfrak{S}_N}\!\!\left(\frac{\displaystyle\sum_{\ell=1}^NK(X^\ell,X^j)}{N}\right)^{1-\gamma}\!\!\!\frac{K(X^i,X^j)}{\displaystyle \sum_{\ell=1}^NK(X^\ell,X^j)}\lambda_{j}(t-\tau^N(j)).
\end{align*}
This shows that $\gamma<1$ has the effect of increasing the rate of infections in densely populated regions, this effect being more important with smaller $\gamma$. Finally, due to the fact that our domain is unbounded, for technical reasons which will be transparent below, we will need to assume that $\gamma<1$. Therefore, in all what follows, we will assume that $\gamma\in[0,1)$.\\
At time $t\geq 0$, we define the following
measures:
\begin{equation}
\label{equa1}
\left\lbrace
\begin{aligned}
\mu_t^{S,N} &= \sum_{i\in\mathfrak{S}_N} \mathbf{1}_{E_{t}^{i,N}=0}\delta_{X^{i}}\\
&=\sum_{i\in\mathfrak{S}_N} \delta_{X^{i}}- \sum_{i\in\mathfrak{S}_N} \mathbf{1}_{t\geq \tau^N(i)}\delta_{X^{i}}\\
\mu_t^{I,N}&=\sum_{i\in\mathfrak{I}_N} \mathbf{1}_{E_{t}^{i}=1}\delta_{X^{i}}+  \sum_{i\in\mathfrak{S}_N} \mathbf{1}_{E_{t}^{i,N}=1}\delta_{X^{i}}\\
&=\sum_{i\in\mathfrak{I}_N} \delta_{X^{i}}- \sum_{i\in\mathfrak{I}_N} \mathbf{1}_{\eta_{-i}\leq  t}\delta_{X^{i}}+\sum_{i\in\mathfrak{S}_N} \mathbf{1}_{t\leq \tau^N(i)}\delta_{X^{i}}-
\sum_{i\in\mathfrak{S}_N} \mathbf{1}_{ t \geq \tau^N(i)+ \eta_i  }\delta_{X^{i}}\\
\mu_t^{R,N}&=\sum_{i\in\mathfrak{R}_N} \delta_{X^{i}}+\sum_{i\in\mathfrak{I}_N} \mathbf{1}_{E_{t}^{i}=2}\delta_{X^{i}}+  \sum_{i\in\mathfrak{S}_N} \mathbf{1}_{E_{t}^{i,N}=2}\delta_{X^{i}}\\
&=\sum_{i\in\mathfrak{R}_N} \delta_{X^{i}}+\sum_{i\in\mathfrak{I}_N} \mathbf{1}_{\eta_{-i}\leq t}\delta_{X^{i}}+\sum_{i\in\mathfrak{S}_N} \mathbf{1}_{ \tau^N(i)+ \eta_i \leq t }\delta_{X^{i}}\\
\mu_t^{N}&= \mu_t^{S,N}+\mu_t^{I,N}+\mu_t^{I,N}=\sum_{i\in\mathfrak{S}_N} \delta_{X^{i}}+ \sum_{i\in\mathfrak{I}_N}\delta_{X^{i}} + \sum_{i\in\mathfrak{R}_N}\delta_{X^{i}}\\
&:=\mu^{N}\\
\mu_t^{\mathfrak{F},N} &=  \sum_{i\in\mathfrak{I}_N} \lambda_{-i}(t)\delta_{X^{i}} +  \sum_{i\in\mathfrak{S}_N} \lambda_{i}(t- \tau^N(i))\delta_{X^{i}}.
	\end{aligned}
\right.
\end{equation}
\begin{itemize}
          \item $\eta_j := \sup\{t > 0,\;\lambda_{j}(t) > 0\}$ is the random duration of  the infectious period of individual $j$, $j \in  \mathbb{Z}^*$.
	\item $\mu_t^{S,N}$ is the empirical measure of susceptible individuals at time $t$;
	\item $\mu_t^{\mathfrak{F},N}$ is the empirical measure of the total force of infection at time $t$;
	\item $\mu_t^{I,N}$  is the empirical measure of infected individuals at time $t$ ;
	\item $\mu_t^{R,N}$ is the empirical measure of individuals recovered at time $t$ ;
	\item $\mu_t^{N}$ is the empirical measure of the total population, whihc is independent of $t$.
\end{itemize}
We now renormalize our measures as follows: $\overline{\mu}_t^{S,N}:=\dfrac{1}{N}\mu_t^{S,N}$; $\overline{\mu}_t^{I,N}:=\dfrac{1}{N}\mu_t^{I,N}$; $\overline{\mu}_t^{R,N}:=\dfrac{1}{N}\mu_t^{R,N}$; $\overline{\mu}^{N}:=\dfrac{1}{N}\mu^{N}$ and $\overline{\mu}_t^{\mathfrak{F},N}:=\dfrac{1}{N}\mu_t^{\mathfrak{F},N}$.
We can now rewrite \eqref{par1eq1} as
\begin{align*}
\mathbf{1}_{E_{t}^{i,N}=0}\int_{D} \dfrac{K(X^{i},y)}{\left[\displaystyle \int_{D}K(z,y)\overline{\mu}^{N}(dz) \right]^{\gamma}}\overline{\mu}_t^{\mathfrak{F},N}(dy).
\end{align*}
The two sequences of random variables $\{\eta_{-j} , j\geq 1\}$ and $\{\eta_{j} , j\geq 1\}$  are i.i.d and globally independent of each other. $F(t) := \mathbb{P}(\eta_1\leq t) \text{ and }F_0(t) := \mathbb{P}(\eta_{-1}\leq t)$ are the distribution functions of $\eta_j$ for $j\in \mathbb{Z}_+$ and for $j\in \mathbb{Z}_-$, respectively. For $ i \in \mathfrak{S}_N$, consider a counting process $A_i^N(t),$ which takes the value $0$ when individual $i$ is not yet infected at time t, and takes the value $1$ when the latter has been infected by time t. Thus, $\tau^N(i) := \inf\{t > 0,\; A_i^N(t) = 1\}$. We define $A_i^N$ as follows :
\begin{align*}
A_{i}^N(t)&=	 \int_{0}^{t}\int_{0}^{\infty}\mathbf{1}_{A^N_{i}(s^{-}) = 0}\mathbf{1}_{u\leq \overline{\Gamma}^{N}(s^-,X^i)}P^{i}(ds,du),\quad \text{where}\\
\overline{\Gamma}^{N}(t,x)&=\int_{D} \dfrac{K(x,y)}{\left[\displaystyle \int_{D}K(z,y)\overline{\mu}^{N}(dz) \right]^{\gamma}}\overline{\mu}_{t}^{\mathfrak{F},N}(dy).
\end{align*}
$\{P^{i},\;i \geq 1\}$ are standard Poisson random measures on $\mathbb{R}_{+}^2$ which are mutually independent.\\
The next proposition follows readily from our model
\begin{propo}
	For all $\varphi \in C_b(D)$, $\left\{\overline{\mu}_t^{N}, \overline{\mu}_t^{S,N},\overline{\mu}_t^{\mathfrak{F},N},\overline{\mu}_t^{I,N}, \overline{\mu}_t^{R,N}, t\geq 0 \right\}$ satisfies
	\begin{equation}
	\label{equa2}
	\left\lbrace
	\begin{aligned}
	(\overline{\mu}_t^{S,N},\varphi)&= (\overline{\mu}_0^{S,N},\varphi)
	-\frac{1}{N} \sum_{i\in\mathfrak{S}_N}\varphi(X^{i})A^N_i(t)\\
	(\overline{\mu}_t^{\mathfrak{F},N},\varphi) &= \frac{1}{N} \sum_{i\in \mathfrak{I}_N} \lambda_{-i}(t)\varphi({X^{i}}) +\frac{1}{N} \sum_{i\in \mathfrak{S}_N} \lambda_{i}(t- \tau^N(i))\varphi({X^{i}})\\
	(\overline{\mu}_t^{I,N},\varphi)&= (\overline{\mu}_0^{I,N},\varphi)- \frac{1}{N}\sum_{i\in\mathfrak{I}_N}\varphi(X^{-i})\mathbf{1}_{\eta_{-i}\leq t}\\
	&+\frac{1}{N} \sum_{i\in\mathfrak{S}_N}\varphi(X^{i})A^N_i(t)- \frac{1}{N} \sum_{i\in\mathfrak{S}_N}\varphi(X^{i})\int_0^t\mathbf{1}_{\eta_i\leq t-s}dA^N_i(s)\\
	&= \frac{1}{N}\sum_{i\in\mathfrak{I}_N}\varphi(X^{-i})\mathbf{1}_{\eta_{-i}> t}+
	\frac{1}{N} \sum_{i\in\mathfrak{S}_N}\varphi(X^{i})\int_0^t\mathbf{1}_{\eta_i> t-s}dA^N_i(s)\\
	(\overline{\mu}_t^{R,N},\varphi)&= (\overline{\mu}_0^{R,N},\varphi)+ \frac{1}{N}\sum_{i\in\mathfrak{I}_N}\varphi(X^{-i})\mathbf{1}_{\eta_{-i}\leq t}+\frac{1}{N} \sum_{i\in\mathfrak{S}_N}\varphi(X^{i})\int_0^t\mathbf{1}_{\eta_i\leq t-s}dA^N_i(s)\\
	(\overline{\mu}_t^{N},\varphi)&=(\overline{\mu}_t^{S,N},\varphi)+(\overline{\mu}_t^{I,N},\varphi)+(\overline{\mu}_t^{R,N},\varphi)\\
	&=\frac{1}{N}\sum_{i\in\mathfrak{S}_N} \varphi(X^{i})+ \frac{1}{N}\sum_{i\in\mathfrak{I}_N}\varphi(X^{i})+ \frac{1}{N}\sum_{i\in\mathfrak{R}_N}\varphi(X^{i})\\
	&=(\overline{\mu}_0^{S,N},\varphi)+(\overline{\mu}_0^{I,N},\varphi)+(\overline{\mu}_0^{R,N},\varphi):=(\overline{\mu}^{N},\varphi)
\end{aligned}
	\right.
	\end{equation}	
\end{propo}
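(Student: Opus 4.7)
The proposition is essentially a bookkeeping identity: each line is obtained by testing the measures defined in \eqref{equa1} against $\varphi\in C_b(D)$, dividing by $N$, and re-expressing the infection and recovery indicators in terms of the counting processes $A_i^N$. The plan is therefore to verify each equation in \eqref{equa2} separately, using two elementary identities that tie the definitions to the process $A_i^N$.

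First, for the susceptible measure, I would start from the second line of the definition of $\mu_t^{S,N}$ in \eqref{equa1}, apply $\varphi$, and divide by $N$. The key observation is that for $i\in\mathfrak{S}_N$, the counting process $A_i^N$ takes values in $\{0,1\}$, jumps exactly at $\tau^N(i):=\inf\{t>0:\,A_i^N(t)=1\}$, and therefore
\[
\mathbf{1}_{t\ge \tau^N(i)} \;=\; A_i^N(t),\qquad i\in\mathfrak{S}_N,\ t\ge 0.
\]
Substituting this identity into $(\mu_t^{S,N},\varphi)/N$ and recognising $N^{-1}\sum_{i\in\mathfrak{S}_N}\varphi(X^i)=(\overline{\mu}_0^{S,N},\varphi)$ yields the first line of \eqref{equa2}. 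The line for $(\overline{\mu}_t^{\mathfrak{F},N},\varphi)$ is just the definition of $\mu_t^{\mathfrak{F},N}$ in \eqref{equa1} tested against $\varphi$ and divided by $N$, so nothing needs to be done beyond writing it out.

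For the infected and recovered measures the extra point is the treatment of the recovery term for initially susceptible individuals. Since $A_i^N$ has a single jump, of size $1$, at $\tau^N(i)$ (and no jump at all on $\{\tau^N(i)=\infty\}$), for any measurable bounded $g$ one has the sample-path identity
\[
\int_0^t g(s)\,dA_i^N(s) \;=\; g(\tau^N(i))\,\mathbf{1}_{\tau^N(i)\le t}.
\]
Applied with $g(s)=\mathbf{1}_{\eta_i\le t-s}$, this gives
\[
\int_0^t \mathbf{1}_{\eta_i\le t-s}\,dA_i^N(s)
\;=\;\mathbf{1}_{\tau^N(i)+\eta_i\le t}\,\mathbf{1}_{\tau^N(i)\le t}
\;=\;\mathbf{1}_{E_t^{i,N}=2},
\]
the last equality because an initially susceptible individual is in state $R$ at time $t$ iff it has been infected by $t$ and its infectious period has already ended. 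Combined with $\mathbf{1}_{t\ge\tau^N(i)}=A_i^N(t)$, one also reads off $\mathbf{1}_{E_t^{i,N}=1}=A_i^N(t)-\int_0^t \mathbf{1}_{\eta_i\le t-s}dA_i^N(s)$. Plugging these two formulas, together with $\mathbf{1}_{E_t^{i}=2}=\mathbf{1}_{\eta_{-i}\le t}$ and $\mathbf{1}_{E_t^i=1}=\mathbf{1}_{\eta_{-i}>t}$ for $i\in\mathfrak{I}_N$, into the definitions of $\mu_t^{I,N}$ and $\mu_t^{R,N}$ in \eqref{equa1}, and then testing against $\varphi/N$, yields the third and fourth lines of \eqref{equa2}; the alternative forms (the ``$=$'' lines) follow by adding and subtracting $(\overline{\mu}_0^{I,N},\varphi)$ and using that $\{\eta_{-i}\le t\}^c=\{\eta_{-i}>t\}$.

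Finally, the last line is obtained by summing the three renormalised measures and using that each initial sum $N^{-1}\sum_{i\in\mathfrak{S}_N\cup\mathfrak{I}_N\cup\mathfrak{R}_N}\varphi(X^i)$ is independent of $t$, which immediately gives $(\overline{\mu}_t^N,\varphi)=(\overline{\mu}^N,\varphi)$. There is no genuine analytical difficulty in this proposition; the only step that requires a little care is the rewriting of the recovery term as an integral against $dA_i^N$, which is the one identity I would write out in full in the proof.
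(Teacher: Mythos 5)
Your verification is correct: the identities $\mathbf{1}_{t\ge\tau^N(i)}=A_i^N(t)$ and $\int_0^t g(s)\,dA_i^N(s)=g(\tau^N(i))\mathbf{1}_{\tau^N(i)\le t}$ are exactly what is needed to pass from the definitions in \eqref{equa1} to the formulas in \eqref{equa2}, and each line checks out. The paper itself gives no proof (it states that the proposition ``follows readily from our model''), and your argument is precisely the routine bookkeeping the authors omit.
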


\section{Law of large numbers of measures}
\label{kk}
In this section, we first state our assumptions, and then describe the limits of the empirical measures defined in section \ref{k} when the population size tends to infinity. We will then study the limit system, and finally prove the convergence result.
\begin{assumption}
	\label{ass1}
In the following
\begin{itemize}
\item $\overline{S}^N(0):=\frac{S^N(0)}{N}\rightarrow\overline{S}(0);$  $\overline{I}^N(0):=\frac{I^N(0)}{N}\rightarrow\overline{I}(0);$ and $\overline{R}^N(0):=\frac{R^N(0)}{N}\rightarrow\overline{R}(0);$
\item $\displaystyle \P(E^i_0=0)=\overline{S}(0),\quad \P(E^i_0=1)=\overline{I}(0), \quad \text{and}\quad \P(E^i_0=2)=\overline{R}(0)$;
\item The pairs $\left\{\left(E^{i}_0, X^i\right), i=1;\cdots;N\right\}$ are i.i.d;
\item For all $i=1;\cdots;N$, $X^i=\begin{cases}
X_S^i\quad \text{if}\quad E^i_0=0,\\
X_I^i\quad \text{if}\quad E^i_0=1,\\
X_R^i\quad \text{if}\quad E^i_0=2;
\end{cases}$
\item $\left(X_S^i, i=1;\cdots;N\right)$ are  i.i.d with the density function $\pi_S$, $\left(X_I^i, i=1;\cdots;N\right)$ are  i.i.d with the density function $\pi_I$ and $\left(X_R^i, i=1;\cdots;N\right)$  are i.i.d with the density function $\pi_R$. Those three collections of r.v are mutually independent
\end{itemize}
\end{assumption}
\begin{lemma}
\label{Lem1}
Under assumption \ref{ass1}, the sequence $(\overline{\mu}^{N})_{N \geq 1}$ converges a.s. in  $\mathcal{M}$ to $\overline{\mu}$ which verifies
\begin{align}
\overline{\mu}(dx)&=\overline{\mu}^S_0(dx)+\overline{\mu}^I_0(dx)+\overline{\mu}^R_0(dx),\nonumber\\
&= \left(\overline{S}(0)\pi_S(x)+\overline{I}(0)\pi_I(x)+\overline{R}(0)\pi_R(x)\right)dx
\end{align}
\end{lemma}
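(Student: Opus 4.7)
The plan is to recognize $\overline{\mu}^N$ as the empirical measure of an i.i.d.\ sample of size $N$ in $D$, and then deduce the result from the classical law of large numbers for empirical measures on a Polish space (Varadarajan's theorem).

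First I would observe that, since $(\mathfrak{S}_N,\mathfrak{I}_N,\mathfrak{R}_N)$ is a partition of $\{1,\dots,N\}$, the definition in \eqref{equa1} gives $\overline{\mu}^N = \frac{1}{N}\sum_{i=1}^{N} \delta_{X^{i}}$. Under Assumption~\ref{ass1} the pairs $(E_0^i,X^i)$ are i.i.d., so in particular the positions $X^{i}$ are i.i.d.; and by conditioning on the value of $E_0^1$,
\[
\P(X^{1}\in A) \;=\; \sum_{k=0}^{2}\P(E_0^{1}=k)\,\P(X^{1}\in A\mid E_0^{1}=k) \;=\; \int_{A}\pi(x)\,dx,
\]
where $\pi(x):=\overline{S}(0)\pi_S(x)+\overline{I}(0)\pi_I(x)+\overline{R}(0)\pi_R(x)$. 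Thus the $X^{i}$ are i.i.d.\ with density $\pi$.

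Second, I would invoke the empirical-measures LLN: for every $\varphi\in C_b(D)$ the strong law of large numbers yields $(\overline{\mu}^N,\varphi)=\frac{1}{N}\sum_{i=1}^{N}\varphi(X^{i})\to \E[\varphi(X^{1})]=\int_{D}\varphi(x)\pi(x)\,dx$ almost surely. Since $D\subset\R^d$ is Polish, one can select a countable convergence-determining family in $C_b(D)$ and intersect the corresponding full-probability events, which isolates a single $\P$-null set outside of which $\overline{\mu}^N \wconv \overline{\mu}$ with $\overline{\mu}(dx)=\pi(x)\,dx$; this is exactly the right-hand side in the lemma.

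There is essentially no obstacle here: the whole argument reduces to a pointwise SLLN for bounded i.i.d.\ real random variables, upgraded to weak convergence via the separability of a convergence-determining class. The only cosmetic subtlety is that $D$ is unbounded, but as $D$ is Polish and each $X^{i}$ lives almost surely in $D$, Varadarajan's theorem applies without modification. The genuine difficulties caused by the unboundedness of $D$ will surface only later, when handling $\overline{\mu}_t^{S,N}$, $\overline{\mu}_t^{I,N}$, $\overline{\mu}_t^{R,N}$ and $\overline{\mu}_t^{\mathfrak{F},N}$ together with the normalising denominator in $\overline{\Gamma}^{N}$.
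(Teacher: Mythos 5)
Your proof is correct and follows essentially the same route as the paper: both reduce the claim to the strong law of large numbers applied to $\frac{1}{N}\sum_i\varphi(X^i)$ for $\varphi\in C_b(D)$, the only cosmetic difference being that the paper splits the sum according to the initial compartment (applying the LLN to each of the three i.i.d.\ sums $\frac1N\sum_i\mathbf{1}_{E_0^i=k}\varphi(X^i_{\cdot})$) whereas you identify the common marginal density $\overline{S}(0)\pi_S+\overline{I}(0)\pi_I+\overline{R}(0)\pi_R$ of the $X^i$ directly. Your explicit appeal to a countable convergence-determining class (Varadarajan's theorem) to upgrade convergence for each fixed $\varphi$ to almost sure convergence in $\mathcal{M}$ is a detail the paper leaves implicit.
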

\begin{proof}
Let $\varphi \in C_b(D)$ and $T>0$. For all $t\in[0;T]$,
\begin{align*}
(\overline{\mu}^{N},\varphi)&=\frac{1}{N}\sum_{i\in\mathfrak{S}_N}\varphi(X^{i})+ \frac{1}{N}\sum_{i\in\mathfrak{I}_N}\varphi(X^{i})+ \frac{1}{N}\sum_{i\in\mathfrak{R}_N}\varphi(X^{i})\\
&=\frac{1}{N}\sum_{i=1}^N\mathbf{1}_{E_0^i=0}\varphi(X_S^{i})+ \frac{1}{N}\sum_{i=1}^N\mathbf{1}_{E_0^i=1}\varphi(X_I^{i})+ \frac{1}{N}\sum_{i=1}^N\mathbf{1}_{E_0^i=2}\varphi(X_R^{i})
\end{align*}
By applying the law of large numbers to each term on the right-hand side, we obtain  that $(\overline{\mu}^{N}, )_{N \geq 1}$ converges a.s. in  $\mathcal{M}$ to $\overline{\mu}$, such that
\begin{align*}
(\overline{\mu},\varphi)&=\overline{S}(0)\int_{D}\varphi(x)\pi_I(x)dx+ \overline{I}(0)\int_{D}\varphi(x)\pi_I(x)dx+ \overline{R}(0)\int_{D}\varphi(x)\pi_R(x)dx,\\
&=\int_{D}\varphi(x)\overline{\mu}^S_0(dx)+ \int_{D}\varphi(x)\overline{\mu}^I_0(dx)+ \int_{D}\varphi(x)\overline{\mu}^R_0(dx)
\end{align*}
We deduce that,
\begin{align*}
\overline{\mu}(dx)&=\overline{\mu}^S_0(dx)+\overline{\mu}^I_0(dx)+\overline{\mu}^R_0(dx).\nonumber\\
&= \left(\overline{S}(0)\pi_S(x)+\overline{I}(0)\pi_I(x)+\overline{R}(0)\pi_R(x)\right)dx\\
\end{align*}
$\overline{\mu}$ is a probability measure, with density $\overline{\mu}(x)= \overline{S}(0)\pi_S(x)+\overline{I}(0)\pi_I(x)+\overline{R}(0)\pi_R(x), \forall x\in D$.\\
We define $S(0,x)\overline{\mu}(x):=\overline{S}(0)\pi_S(x)$; $I(0,x)\overline{\mu}(x):=\overline{I}(0)\pi_I(x)$, and $R(0,x)\overline{\mu}(x):=\overline{R}(0)\pi_R(x)$
\end{proof}
Let $y \in \mathbb{R}^d$, $l_y \in \mathbb{R}^d \setminus \{0\}$, $\|l_y\|=1$ and $\alpha \in [0,\pi)$. We define $C(y,l_y,\alpha):= \left\{\, x \in \mathbb{R}^d : 
\frac{<x - y,l_y>}{\|x - y\|} \ge \cos \alpha \,\right\},$
the cone with vertex $y$, axis parallel to the vector \(l_y\), and (half-)opening angle \(\alpha\).
\begin{assumption}
	\label{ass2}
	We assume that :
	\begin{itemize}
		\item $ \exists r, \underline{c}, \underline{R}, C>0$ such that
		\begin{itemize}
		\item[$i)$]$\forall x,y \in D, \quad K(x,y)\leq C$
		\item[$ii)$]$\|x-y\|\leq r\implies K(x,y)\geq \underline{c},$
		\item[$iii)$] $\|x-y\|> \underline{R}\implies K(x,y)=0$,
		\end{itemize} 
	\item $\exists c_0, C_0, \delta , a > 0$,  $c_0e^{-a\|x\|^{\delta}}\leq\overline \mu(x) \leq C_0e^{-a\|x\|^{\delta}}$;
	\item $\exists \alpha, r > 0, \forall y\in D$ $\exists l_y$, $\|l_y\|=1$ such that $C(y, l_y,\alpha)\cap B(y,r)\subseteq D$ 
			\end{itemize}
\end{assumption}
We shall denote $m:=\displaystyle \int_{C(y,l_y,\alpha)\cap B(y,r)}dz$.

We will assume that there exists a sequence $\{M_n,\ n\ge1\}$ of positive numbers such that $M_1>\underline{R}$ and $M_n\rightarrow \infty$, and for all $n\geq 1$, $D_n:=D\cap B(0,M_n)$ satisfies the last property of Assumption \ref{ass2}.

We shall need the following inequality.
\begin{lemma}
\label{rapp}
Let $\delta, x,y>0$, 
\begin{itemize}
\item If $0<\delta \leq 1$: $(x+y)^\delta \leq x^\delta+y^ \delta$
\item If $\delta \geq 1$: for any $\epsilon>0$, $\exists C_{\epsilon,\delta}$ s.t.  $(x+y)^\delta \leq (1+\epsilon) x^{\delta }+ C_{\epsilon, \delta}y^{\delta}$.
\end{itemize}
\end{lemma}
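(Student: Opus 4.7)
The plan is to treat the two cases separately using elementary arguments; neither requires any probabilistic input.

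For the case $0<\delta\le 1$, I would exploit the fact that the map $t\mapsto t^\delta$ is concave on $[0,\infty)$ and vanishes at $0$, which forces subadditivity. Concretely, since $x/(x+y)\in[0,1]$ and $\delta\le 1$, one has $(x/(x+y))^\delta\ge x/(x+y)$. Multiplying by $(x+y)^\delta$ gives $x^\delta\ge x(x+y)^{\delta-1}$, and symmetrically $y^\delta\ge y(x+y)^{\delta-1}$. Adding the two inequalities yields $x^\delta+y^\delta\ge(x+y)(x+y)^{\delta-1}=(x+y)^\delta$, which is the desired bound. (The case $x=0$ or $y=0$ is trivial.)

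For the case $\delta\ge 1$, given $\epsilon>0$ I would set $\epsilon':=(1+\epsilon)^{1/\delta}-1>0$ and split according to the relative size of $x$ and $y$. If $y\le \epsilon'x$, then by monotonicity of $t\mapsto t^\delta$ we get
\begin{equation*}
(x+y)^\delta\le (1+\epsilon')^\delta x^\delta=(1+\epsilon)x^\delta.
\end{equation*}
Otherwise $x<y/\epsilon'$, which gives
\begin{equation*}
(x+y)^\delta\le \bigl(1+\tfrac{1}{\epsilon'}\bigr)^\delta y^\delta.
\end{equation*}
In either case, since both right-hand sides are nonnegative, we obtain
\begin{equation*}
(x+y)^\delta\le(1+\epsilon)x^\delta+C_{\epsilon,\delta}\,y^\delta
\quad\text{with}\quad C_{\epsilon,\delta}:=\bigl(1+\tfrac{1}{\epsilon'}\bigr)^\delta.
\end{equation*}

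There is no genuine obstacle here; the lemma is a standard elementary inequality of the Young type, and the only minor point to watch is the dependence $C_{\epsilon,\delta}\to\infty$ as $\epsilon\to 0^+$ (which is inherent, since for $\delta>1$ the sharp constant in front of $x^\delta$ is $2^{\delta-1}>1$ if one wants $C_{\epsilon,\delta}=2^{\delta-1}$ as well). A slightly slicker alternative, which I would mention as a remark rather than pursue, is to set $t=x/y$ and observe that $t\mapsto(1+t)^\delta-(1+\epsilon)t^\delta$ is continuous on $[0,\infty)$ with limit $-\infty$ as $t\to\infty$, hence attains a finite maximum which can be taken as $C_{\epsilon,\delta}$.
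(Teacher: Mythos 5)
Your proof is correct, and the second half takes a genuinely different route from the paper. For the case $0<\delta\le 1$ both arguments rest on concavity/subadditivity of $t\mapsto t^\delta$; you merely make the standard deduction explicit via $\bigl(x/(x+y)\bigr)^\delta\ge x/(x+y)$, which is fine. For $\delta\ge 1$ the paper proceeds by a doubling argument: it first proves $(x+y)^2\le(1+\theta)x^2+(1+\theta^{-1})y^2$ from the cross-term estimate, iterates to exponents $2^k$, then interpolates down to a general $\delta\in(2^{k-1},2^k]$ by raising to the power $\delta 2^{-k}\le 1$ and invoking the first part, finally choosing $\theta=(1+\epsilon)^{1/\delta}-1$. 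Your dichotomy on whether $y\le\epsilon' x$ or $x<y/\epsilon'$ (with $\epsilon'=(1+\epsilon)^{1/\delta}-1$) reaches the same constant $C_{\epsilon,\delta}=(1+1/\epsilon')^\delta$ in two lines, needs no iteration, and in fact works for every $\delta>0$, not just $\delta\ge1$; it only uses monotonicity of $t\mapsto t^\delta$ and the nonnegativity of the discarded term. The paper's route is more in the spirit of a Young-type interpolation and generalizes naturally to settings where one only has a quadratic-form estimate to start from, but for this lemma your argument is the more economical one. Your closing remark about the compactness alternative (maximizing $t\mapsto(1+t)^\delta-(1+\epsilon)t^\delta$) is also a valid third proof, though it gives a less explicit constant.
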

\begin{proof}
For $0<\delta\le1$, $ x \mapsto f(x)=x^\delta$ is concave on $\mathbb{R}_+$, hence the first part of the result.

If $\delta=2$, we have, for any $\theta>0$,
\[(x+y)^2= x^2+ 2 \sqrt{\theta}x  \frac{y}{\sqrt{\theta}} +y^2\leq (1+ \theta )x^2   +\left(1+\frac{1}{\theta}\right)y^2\,.\]
Iterating this result yields for any $k\in\mathbb{N}$
 \begin{align*}
  (x+y)^{2^k}&\leq (1+\theta )^{2^k-1} x^{2^k}+\left(1+\frac{1}{\theta}\right)^{2^k-1} y^{2^k}\\
  &\le (1+\theta )^{2^k} x^{2^k}+\left(1+\frac{1}{\theta}\right)^{2^k} y^{2^k}
  \end{align*}
  Finally for any $\delta>1$, there exists $k\in\mathbb{N}$ such that $2^{k-1}< \delta \leq 2^k$. Raising the last inequality to the power 
  $\delta 2^{-k}\le1$ and exploiting the first part of the result yields 
  \[(x+y)^\delta\le(1+\theta )^{\delta} x^{\delta}+\left(1+\frac{1}{\theta}\right)^{\delta} y^{\delta},\]
  from which the second part of the result follows by choosing $\theta=(1+\epsilon)^{1/\delta}-1$
\end{proof}

We now state the main result of our paper.
\begin{theorem}
	\label{theo1}
	Under assumptions \ref{ass1} and \ref{ass2}, the sequence $(\overline{\mu}^{S,N}, \overline{\mu}^{\mathfrak{F},N},\overline{\mu}^{I,N},\overline{\mu}^{R,N})_{N \geq 1}$ converges in probability in $\mathfrak{D}_{\mathcal{M}}$  to $(\overline{\mu}^{S}, \overline{\mu}^{\mathfrak{F}},\overline{\mu}^{I},\overline{\mu}^{R})$ such that  $\left\{ \overline{\mu}^S_t, \overline{\mu}^{\mathfrak{F}}_t, \overline{\mu}^I_t, \overline{\mu}^R_t,   t\geq 0\right\}$  satisfies, for all $\varphi \in C_b(D)$:
	\begin{equation}
	\label{sy}
	\left\lbrace
	\begin{aligned}
	(\overline{\mu}_t^{S},\varphi)&= (\overline{\mu}_0^{S},\varphi)- \int_0^t\int_{D}\varphi(x)\overline{\Gamma}(s,x)\overline{\mu}_s^{S}(dx)ds,\\
	(\overline{\mu}_t^{\mathfrak{F}},\varphi) &=\overline{\lambda}^0(t)(\overline{\mu}^I_0,\varphi)+\int_0^t\overline{\lambda}(t-s)\int_{D}\varphi(x)\overline{\Gamma}(s,x)\overline{\mu}_s^{S}(dx)ds,\\
	(\overline{\mu}_t^{I},\varphi)&= (\overline{\mu}_0^{I},\varphi)F_0^c(t)+\int_0^tF^c(t-s)\int_{D} \varphi (x)\overline{\Gamma}(s,x)\overline{\mu}_s^S(dx)ds,\\
	(\overline{\mu}_t^{R},\varphi)&=(\overline{\mu}_0^{R},\varphi)+ (\overline{\mu}_0^{I},\varphi)F_0(t)+\int_0^t F(t-s) \int_{D} \varphi (x)\overline{\Gamma}(s,x)\overline{\mu}_s^S(dx)ds,\\
	\overline{\Gamma}(t,x)&=\int_{D} \dfrac{K(x,y)}{\left[\displaystyle\int_{D}K(z,y)\overline{\mu}(dz) \right]^{\gamma}}\overline{\mu}_t^{\mathfrak{F}}(dy).
	\end{aligned}
	\right.
	\end{equation}	
\end{theorem}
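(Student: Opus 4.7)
The proof splits into three stages: well-posedness of the limit system \eqref{sy}; tightness of the empirical measures in $\mathfrak{D}_{\mathcal{M}}$; and identification of the limit through a martingale decomposition. The pervasive obstacle is the tail behavior of measures on the unbounded domain.

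\textbf{Well-posedness of \eqref{sy}.} I would view \eqref{sy} as a fixed-point problem for $\overline{\mu}^{\mathfrak{F}}$. Given a candidate $\overline{\mu}^{\mathfrak{F}}$, the formula for $\overline{\Gamma}$ and the linear equation for $\overline{\mu}^S$ determine them explicitly; reinserting into the $\overline{\mu}^{\mathfrak{F}}$-line yields the map to iterate. The key a priori estimate is a uniform pointwise bound on $\overline{\Gamma}(t,x)$. The cone assumption combined with $\overline{\mu}(z)\ge c_0 e^{-a\|z\|^{\delta}}$ and $K(z,y)\ge\underline{c}$ on $\|z-y\|\le r$ gives
\[\int_D K(z,y)\overline{\mu}(dz)\ge \underline{c}\,m\,c_0\,e^{-a(\|y\|+r)^{\delta}},\]
so the denominator in $\overline{\Gamma}$ is controlled by $Ce^{a\gamma(\|y\|+r)^{\delta}}$. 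With $\overline{\mu}^{\mathfrak{F}}_t\le\lambda^*\overline{\mu}$, the range restriction $K(x,y)=0$ for $\|x-y\|>\underline{R}$, and Lemma \ref{rapp} applied with $\epsilon$ chosen so that $\gamma(1+\epsilon)<1$, one obtains a bound on $\overline{\Gamma}(t,x)$ uniform in $(t,x)$. This is precisely where $\gamma<1$ is used. A standard contraction argument in $C([0,T],\mathcal{M})$ (for small $T$, then iterated) yields existence and uniqueness.

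\textbf{Tightness and limit identification.} For tightness of the empirical processes in $\mathfrak{D}_{\mathcal{M}}$, it suffices to control the scalar processes $(\overline{\mu}^{\cdot,N}_t,\varphi)$ for $\varphi\in C_c(D)$. The estimates above, applied to $\overline{\mu}^N$ (valid with high probability via Lemma \ref{Lem1} and an empirical cone bound), let Aldous' criterion apply. For limit identification, I would use the Poisson representation
\[A_i^N(t)=\int_0^t\mathbf{1}_{A_i^N(s^-)=0}\,\overline{\Gamma}^N(s,X^i)\,ds+M_i^N(t),\]
where $M_i^N$ is a purely discontinuous martingale whose bracket equals the compensator. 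The aggregated martingale $\frac{1}{N}\sum_{i\in\mathfrak{S}_N}\varphi(X^i)M_i^N(t)$ has variance $O(1/N)$ when $\varphi\in C_c(D)$, hence vanishes in $L^2$. An LLN for the i.i.d. triples $(X^i,\lambda_{\pm i},\eta_{\pm i})$, combined with the convergence $\overline{\Gamma}^N\to\overline{\Gamma}$, then yields the integral equations in \eqref{sy}.

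\textbf{Main obstacle.} The hard part is showing that $\overline{\Gamma}^N(s,x)\to\overline{\Gamma}(s,x)$ strongly enough to pass to the limit despite the unboundedness of $D$. Two empirical tail controls must be combined: a uniform-on-compacts lower bound on $\int_D K(z,y)\overline{\mu}^N(dz)$ (requiring an empirical version of the cone estimate), and exponential decay of the mass of $\overline{\mu}^{\mathfrak{F},N}$ far from the origin, inherited from the density of the $X^i$'s. The truncation $D_n=D\cap B(0,M_n)$ localizes the argument: since $K(x,\cdot)$ has range $\underline{R}$, the dynamics inside $D_n$ depend only on a bounded enlargement of $D_n$, on which the bounded-domain result of \cite{AK-EP} essentially applies; the contribution from $D\setminus D_n$ is controlled uniformly in $N$ by the exponential decay in Assumption \ref{ass2} and the dominating function from the first step. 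Uniqueness from the first step then upgrades tightness to convergence in probability of the full sequence, completing the proof.
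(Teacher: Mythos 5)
Your overall architecture is close to the paper's: well-posedness of \eqref{sy} via the cone lower bound on $\int_D K(z,y)\overline{\mu}(dz)$, Lemma \ref{rapp}, and the condition $\gamma(1+\epsilon)<1$ reproduces the content of Proposition \ref{oo} and Lemmas \ref{lem1}--\ref{lem3}; and the idea of truncating to $D_n=D\cap B(0,M_n)$, invoking the bounded-domain result of \cite{AK-EP} there, letting $n\to\infty$ in the deterministic system, and upgrading subsequential convergence to convergence in probability via uniqueness is exactly the paper's route. The identification of $\gamma<1$ and the exponential decay of $\overline{\mu}$ as the reasons everything stays finite is also correct.

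However, there is a genuine gap at the step you describe as the "main obstacle", and the justification you offer there is wrong. The claim that ``since $K(x,\cdot)$ has range $\underline{R}$, the dynamics inside $D_n$ depend only on a bounded enlargement of $D_n$'' is false: although each interaction has range $\underline{R}$, infection chains propagate influence from arbitrarily far away within any fixed time window $[0,T]$ (an individual outside $B(0,M_n+k\underline{R})$ can infect someone in $D_n$ through $k$ intermediaries), so the full process restricted to $D_n$ is \emph{not} a finite-range perturbation of the truncated process. The heart of the paper's proof is precisely the quantitative comparison of the two stochastic dynamics: one couples $A^N_{n,i}$ and $A^N_i$ through the \emph{same} Poisson measures and runs a Gronwall estimate on $\mathbb{E}\bigl[\limsup_k\frac{1}{N_k}\sum_{i}\sup_{t\le T}|A^{N_k}_{n,i}(t)-A^{N_k}_i(t)|\mathbf{1}_{\|X^i\|\le M_n}\bigr]$ (Lemma \ref{Lem imp2}). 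The inhomogeneous terms of that Gronwall inequality are the boundary errors you gesture at (they decay like $e^{-a(\frac{1}{1+\epsilon}-\gamma)M_n^\delta}$), but the error also feeds back into the dynamics through a term of the form $\frac{1}{N}\sum_j(\cdots)|A^N_{n,j}-A^N_j|$, and closing the loop requires the Gronwall constant to be \emph{uniform in $n$ and $N$}; this is supplied by Lemma \ref{lemEP}, i.e.\ the finiteness of $\sup_n\int_{B(0,M_n)}\bigl(\inf_{k\le q(\|x\|+\underline{R})}\overline{\mu}(\mathfrak{A}_k)\bigr)^{-\gamma}\overline{\mu}^S_0(dx)$, which again rests on $\gamma<1$ and on the partition/cone construction. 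Interchanging the expectation with the limit in $N$ further forces the a.s.\ diagonal subsequence of Corollary \ref{coro}. None of this machinery appears in your proposal, and the phrase ``controlled uniformly in $N$ by the exponential decay'' does not substitute for it. A secondary issue: in your direct tightness/martingale route, a ``uniform-on-compacts'' lower bound on $\int_D K(z,y)\overline{\mu}^N(dz)$ is insufficient; one needs the explicit rate $\inf_{\ell\le q(M_n+\underline{R})}\overline{\mu}^N(\mathfrak{A}_\ell)\ge\tfrac12 c\,e^{-a(M_n+\underline{R})^\delta}$ on events of probability tending to one (the sets $\Omega_0^N,\Omega_1^N$ in Lemma \ref{lem}), since the points $y$ carrying the force of infection range over all of $B(0,M_n+\underline{R})$ with $n$ unbounded.
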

We will first establish the next result 
\begin{propo}
	\label{oo}
	The  system \eqref{sy} admits at most one  solution  $\left(\overline{\mu}^S_t,\overline{\mu}_t^{\mathfrak{F}},\overline{\mu}_t^{I},\overline{\mu}_t^{R}, t \geq 0\right)$   which is absolutely continuous with respect to the measure $\overline{\mu}$, with the densities $\left(S(t,.),\mathfrak{F}(t,.),I(t,.) ,R(t,.), t\geq 0\right)$ satisfying for all $x\in D$
	\begin{equation}
	\label{syste}
	\left\lbrace
	\begin{aligned}
	S(t,x)&= S(0,x)- \int_0^t\int_{D} \Lambda(x,y)\mathfrak{F}(s,y)S(s,x)dyds\\
	\mathfrak{F}(t,x) &=\overline{\lambda}^0(t)I(0,x)+\int_0^t\overline{\lambda}(t-s)\int_{D} \Lambda(x,y)\mathfrak{F}(s,y)S(s,x)dyds.\\
	I(t,x)&= I(0,x)F_0^c(t)+\int_0^t F^c(t-s)\int_{D} \Lambda(x,y)\mathfrak{F}(s,y)S(s,x)ds\\
	R(t,x)&= R(0,x)+I(0,x)F_0(t)+\int_0^tF(t-s) \int_{D} \Lambda(x,y)\mathfrak{F}(s,y)S(s,x)dxdyds\\
\Lambda(x,y)&=\dfrac{K(x,y)\overline{\mu}(y)}{\left[\displaystyle\int_{D}K(z,y)\overline{\mu}(dz) \right]^{\gamma}}.
	\end{aligned}
	\right.
	\end{equation}
\end{propo}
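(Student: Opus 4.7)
The plan is to reduce uniqueness of the quadruple to uniqueness of the pair $(S,\mathfrak{F})$, since the equations in \eqref{syste} for $I$ and $R$ express those densities as explicit linear functionals of the common source term $\int_D\Lambda(x,y)\mathfrak{F}(s,y)\,dy\,S(s,x)$ together with the initial data. Fixing $T>0$ and two candidate solutions $(S_i,\mathfrak{F}_i)$, $i=1,2$, on $[0,T]$, I would subtract the corresponding equations and close a Gronwall inequality for
$H(t):=\sup_{x\in D}|S_1(t,x)-S_2(t,x)|+\sup_{x\in D}|\mathfrak{F}_1(t,x)-\mathfrak{F}_2(t,x)|.$

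The main obstacle --- the ingredient that makes the whole argument work on an unbounded domain --- is the uniform estimate
\[\sup_{x\in D}\int_D\Lambda(x,y)\,dy\le C_1<\infty.\]
To obtain it, I would exploit that $K(x,y)=0$ for $\|x-y\|>\underline{R}$ by Assumption \ref{ass2}, which confines the integration in $y$ to $B(x,\underline{R})$, a set of uniformly bounded Lebesgue measure. It then suffices to show that $\Lambda$ is uniformly bounded. The numerator is controlled by $K(x,y)\overline{\mu}(y)\le C\,C_0\,e^{-a\|y\|^\delta}$. For the denominator, the cone condition gives $C(y,l_y,\alpha)\cap B(y,r)\subset D$; combined with $K(z,y)\ge\underline{c}$ on $B(y,r)$ and $\overline{\mu}(z)\ge c_0\,e^{-a\|z\|^\delta}$, I obtain
\[\int_D K(z,y)\overline{\mu}(dz)\ge \underline{c}\,c_0\,m\,e^{-a(\|y\|+r)^\delta},\]
hence $\Lambda(x,y)\le C'\exp\bigl\{-a\|y\|^\delta+a\gamma(\|y\|+r)^\delta\bigr\}$. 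At this point I would invoke Lemma \ref{rapp}, choosing $\epsilon>0$ with $\gamma(1+\epsilon)<1$ (possible precisely because $\gamma<1$), to bound $(\|y\|+r)^\delta$ from above; the resulting coefficient of $\|y\|^\delta$ in the exponent is strictly negative, so the exponent is uniformly bounded above in $y$. This is exactly where the restriction $\gamma<1$ is indispensable.

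Once $C_1$ is available, the rest is standard. The first line of \eqref{syste} reads $\partial_tS(t,x)=-S(t,x)\overline{\Gamma}(t,x)$, giving $S(t,x)=S(0,x)\exp\bigl(-\int_0^t\overline{\Gamma}(u,x)du\bigr)$, hence $0\le S(t,x)\le S(0,x)\le 1$ because $\overline{S}(0)\pi_S\le\overline{\mu}$; similarly $I(0,x)\le 1$. Substituting into the $\mathfrak{F}$-equation, using $\overline{\lambda}^0,\overline{\lambda}\le\lambda^*$ and $S\le 1$, taking $\sup_{x\in D}$, and applying classical Gronwall with $\int_D\Lambda(x,y)\,dy\le C_1$ yields a uniform bound $M_T:=\sup_{[0,T]\times D}\mathfrak{F}<\infty$. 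Subtracting the two sets of equations, exploiting $S_1\le 1$ and $\mathfrak{F}_2\le M_T$, and taking $\sup_x$ gives
\[H(t)\le \kappa\int_0^t H(s)\,ds,\qquad \kappa:=(1+\lambda^*)\,C_1\,(1+M_T),\]
so $H\equiv 0$ on $[0,T]$ by Gronwall, and $T$ being arbitrary finishes uniqueness of $(S,\mathfrak{F})$; the formulas for $I$ and $R$ in \eqref{syste} then determine them uniquely from $(S,\mathfrak{F})$ and the initial data.
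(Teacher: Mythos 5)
Your proof follows essentially the same route as the paper's: your key uniform bound $\sup_{x\in D}\int_D\Lambda(x,y)\,dy<\infty$ (obtained from the compact support of $K$, the cone condition, the exponential envelope on $\overline{\mu}$, and Lemma \ref{rapp} with $\gamma(1+\epsilon)<1$) is exactly the paper's Lemma \ref{lem1}, your a priori bounds $S(t,\cdot)\le S(0,\cdot)$ and $\sup_{[0,T]\times D}\mathfrak{F}<\infty$ are Lemma \ref{lem3}, and your Gronwall argument for $H(t)$ is the paper's Step 2. The only piece you omit is the paper's short Step 1, which checks that any solution of \eqref{sy} is automatically absolutely continuous with respect to $\overline{\mu}$ with densities satisfying \eqref{syste} (via the monotonicity $\overline{\mu}^S_t\le\overline{\mu}^S_0$ and $\overline{\mu}^I_t\le\overline{\mu}^I_0+\int_0^t\overline{\Gamma}(s,\cdot)\overline{\mu}^S_s\,ds$, etc.), so that uniqueness for \eqref{syste} indeed yields uniqueness for \eqref{sy}.
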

Admitting for a moment the first part of Proposition \ref{oo}, we first establish the following a priori estimates.
\begin{lemma}
	\label{lem1}
For any $\varphi \in L^{\infty} (\mathbb{R}^d)$,  $\left \|\displaystyle \int_{D} \Lambda(.,y)\varphi(y)dy\right \|_{\infty}\leq C\|\varphi\|_{\infty}.$
\end{lemma}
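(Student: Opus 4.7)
Since $\varphi$ enters only as a factor, it suffices to establish $\sup_{x\in D}\int_D \Lambda(x,y)\,dy \leq C$. The strategy is to get a pointwise bound on $\Lambda(x,y)$ with a Gaussian-type tail in $\|y\|$ that is integrable over $\mathbb{R}^d$ uniformly in $x$. For the numerator, Assumption \ref{ass2}(i) gives $K(x,y)\leq C$ and the upper bound in Assumption \ref{ass2} gives $\overline{\mu}(y)\leq C_0 e^{-a\|y\|^{\delta}}$, hence
\[ K(x,y)\overline{\mu}(y) \leq CC_0 e^{-a\|y\|^{\delta}}. \]

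For the denominator, I fix $y\in D$ and use the cone assumption to select $l_y$ with $C(y,l_y,\alpha)\cap B(y,r)\subseteq D$. On this set, $\|z-y\|\leq r$ so Assumption \ref{ass2}(ii) gives $K(z,y)\geq \underline{c}$, while the lower bound on $\overline{\mu}$ gives $\overline{\mu}(z)\geq c_0 e^{-a\|z\|^{\delta}} \geq c_0 e^{-a(\|y\|+r)^{\delta}}$ for all $z$ in this set. Integrating and using the definition of $m$ yields
\[ \int_D K(z,y)\overline{\mu}(dz) \geq \underline{c}\, c_0 \, m \, e^{-a(\|y\|+r)^{\delta}}. \]
Combining these estimates,
\[ \Lambda(x,y) \leq \frac{CC_0}{(\underline{c}\, c_0 \, m)^{\gamma}} \exp\!\bigl(-a\|y\|^{\delta} + a\gamma(\|y\|+r)^{\delta}\bigr). \]

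To make the exponent negative for large $\|y\|$, I invoke Lemma \ref{rapp}. If $\delta\leq 1$, $(\|y\|+r)^{\delta}\leq \|y\|^{\delta}+r^{\delta}$ and the exponent is bounded by $-a(1-\gamma)\|y\|^{\delta}+a\gamma r^{\delta}$. If $\delta>1$, I pick $\eps>0$ small enough so that $\gamma(1+\eps)<1$ (possible because $\gamma<1$), and Lemma \ref{rapp} gives $(\|y\|+r)^{\delta}\leq (1+\eps)\|y\|^{\delta}+C_{\eps,\delta}r^{\delta}$, yielding
\[ \Lambda(x,y) \leq C' \exp\!\bigl(-a(1-\gamma(1+\eps))\|y\|^{\delta}\bigr), \]
with $C'$ independent of $x$ and $y$. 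In both cases the right-hand side is integrable over $\mathbb{R}^d$, so $\int_D \Lambda(x,y)\,dy\leq C''$ uniformly in $x\in D$, and the claim follows from $|\int_D \Lambda(x,y)\varphi(y)\,dy|\leq \|\varphi\|_{\infty}\int_D\Lambda(x,y)\,dy$.

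\textbf{Main obstacle.} The delicate point is the uniform lower bound on $\int_D K(z,y)\overline{\mu}(dz)$ for arbitrary $y\in D$, including boundary points of an unbounded domain; this is exactly what the interior cone condition in Assumption \ref{ass2} is designed to furnish, guaranteeing a region of positive volume $m$ (independent of $y$) inside $D$ on which both $K(z,y)$ and $\overline{\mu}(z)$ admit explicit lower bounds. The assumption $\gamma<1$ is then essential at the exponent-matching step: it is what permits the choice of $\eps$ making $1-\gamma(1+\eps)>0$ and thus preserves the Gaussian-type decay that ensures integrability over the unbounded domain.
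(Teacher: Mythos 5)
Your proof is correct and follows essentially the same route as the paper's: bound the numerator by $CC_0e^{-a\|y\|^{\delta}}$, bound the denominator from below by $\underline{c}\,c_0\,m\,e^{-a(\|y\|+r)^{\delta}}$ on the cone $C(y,l_y,\alpha)\cap B(y,r)$ furnished by Assumption \ref{ass2}, and use Lemma \ref{rapp} together with $\gamma<1$ to control the resulting exponent. The only immaterial difference is that the paper also retains the indicator $\mathbf{1}_{\|x-y\|\leq \underline{R}}$ coming from the compact support of $K$, so it can settle for $\gamma(1+\epsilon)\leq 1$ and bound the last integral by the volume of a ball, whereas you discard that indicator and instead require the strict inequality $\gamma(1+\epsilon)<1$ to make the exponential tail integrable over $\mathbb{R}^d$.
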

\begin{proof} Let $\varphi \in L^{\infty} (\mathbb{R}^d)$.
\begin{align*}
\left\|\displaystyle \int_{D} \Lambda(.,y)\varphi(y)dy\right \|_{\infty}&=\sup_{x\in D}\int_{D} \dfrac{K(x,y)\overline{\mu}(y)}{\left[\displaystyle\int_{D}K(z,y)\overline{\mu}(z)dz\right]^{\gamma}}\varphi(y)dy,\\
&\leq\frac{C_0 C}{(\underline{c}c_0)^\gamma}\sup_{x\in D}\int_{D}  \dfrac{\mathbf{1}_{\|x-y\|\leq \underline{R}}e^{-a\|y\|^{\delta}}}{\left[\displaystyle\int_{D} \mathbf{1}_{\|z-y\|\leq r}e^{-a\|z\|^{\delta}}dz\right]^{\gamma}}dy\|\varphi\|_{\infty},\\
&\leq \frac{C}{e^{-a\gamma C_\epsilon r^{\delta}}}\sup_{x\in D}\int_{D}  \dfrac{\mathbf{1}_{\|x-y\|\leq \underline{R}}e^{-a\|y\|^{\delta}}}{\left[\displaystyle\int_{C(y,l_y,\alpha)\cap B(y,r)}dz\right]^{\gamma}e^{-a\gamma (1+\epsilon)\|y\|^{\delta}}}dy\|\varphi\|_{\infty},\\
&\leq C\sup_{x\in \R^d}\int_{\R^d} \mathbf{1}_{\|x-y\|\leq \underline{R}}e^{-a(1-\gamma (1+\epsilon))\|y\|^{\delta}}dy\|\varphi\|_{\infty},
\end{align*}
where we have used Lemma \ref{rapp}, in the last inequality.
Since $\gamma <1$, we can choose $\epsilon>0$ such that $\gamma(1+\epsilon) \leq 1$, and we obtain
\begin{align*}
\left\|\displaystyle \int_{D} \Lambda(.,y)dy\right \|_{\infty}&\leq C \sup_{x\in \R^d}\int_{\R^d} \mathbf{1}_{\|x-y\|\leq \underline{R}}dy \|\varphi\|_{\infty}\\
&\leq C\|\varphi\|_{\infty}.
\end{align*}
\end{proof}
Let $\Omega$ be defined on $D\times D$,  by $
\Omega(x,y):=\dfrac{K(x,y)\overline{\mu}(x)}{\left[\displaystyle\int_{D}K(z,y)\overline{\mu}(dz) \right]^{\gamma}}.$
\begin{lemma}
	\label{lem2}
	For any $\varphi \in L^{\infty} (\mathbb{R}^d)$, $\left\|\displaystyle \int_{D} \Omega(x,.)\varphi(x)dx\right\|_{\infty}\leq C\|\varphi\|_{\infty}.$
\end{lemma}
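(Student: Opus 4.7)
The plan is to follow closely the proof of Lemma \ref{lem1}, keeping in mind the change of variable roles inside $\Omega$: here $\overline{\mu}$ appears with the integration variable $x$, while the sup is taken over the free variable $y$. Writing
$$\left\|\int_D \Omega(x,\cdot)\varphi(x)\,dx\right\|_\infty = \sup_{y\in D}\int_D \Omega(x,y)\varphi(x)\,dx,$$
I would bound the numerator of $\Omega(x,y)$ above by $C\,\mathbf{1}_{\|x-y\|\le \underline R}\,e^{-a\|x\|^\delta}$, using Assumption \ref{ass2}. For the denominator the argument is identical to that of Lemma \ref{lem1}: restrict the $z$-integration to the cone sector $C(y,l_y,\alpha)\cap B(y,r)\subseteq D$, use $K(z,y)\ge \underline c$, $\overline{\mu}(z)\ge c_0 e^{-a\|z\|^\delta}$, and apply Lemma \ref{rapp} in the form $\|z\|^\delta\le (1+\epsilon)\|y\|^\delta+C_{\epsilon,\delta}\|z-y\|^\delta$ with $\|z-y\|\le r$. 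Raising to the power $\gamma$, this produces
$$\int_D \Omega(x,y)\varphi(x)\,dx \le C\,e^{a\gamma(1+\epsilon)\|y\|^\delta}\int_D \mathbf{1}_{\|x-y\|\le \underline R}\,e^{-a\|x\|^\delta}\,dx\,\|\varphi\|_\infty.$$

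At this point the argument diverges from Lemma \ref{lem1}. There, the residual Gaussian factor was in the integration variable $y$ and disappeared after a trivial integration over $\mathbb{R}^d$; here the Gaussian is in the integration variable $x$ while the compensating exponential factor is in $\|y\|$. The key new step is a second application of Lemma \ref{rapp}, now in the reverse direction: $\|y\|^\delta=\|(y-x)+x\|^\delta\le (1+\epsilon')\|x\|^\delta+C_{\epsilon',\delta}\|x-y\|^\delta$, which on the support $\{\|x-y\|\le \underline R\}$ gives $\|x\|^\delta \ge \tfrac{1}{1+\epsilon'}\|y\|^\delta - C$. Hence $e^{-a\|x\|^\delta}\le C\, e^{-\tfrac{a}{1+\epsilon'}\|y\|^\delta}$ on this support, and since $\int_{\mathbb{R}^d}\mathbf{1}_{\|x-y\|\le \underline R}\,dx = |B(0,\underline R)|<\infty$, one obtains
$$\int_D \Omega(x,y)\varphi(x)\,dx \le C\,e^{-a\left(\tfrac{1}{1+\epsilon'}-\gamma(1+\epsilon)\right)\|y\|^\delta}\,\|\varphi\|_\infty.$$

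Because $\gamma<1$, the two free parameters $\epsilon,\epsilon'>0$ can be chosen simultaneously small enough to guarantee $\gamma(1+\epsilon)(1+\epsilon')\le 1$, which makes the coefficient in the exponent non-negative. The exponential factor is then bounded by $1$ uniformly in $y\in D$, and taking the supremum in $y$ yields the desired $C\|\varphi\|_\infty$ bound. The main obstacle is precisely this asymmetry relative to Lemma \ref{lem1}: the integrand's Gaussian decay is in $\|x\|$ while the compensating factor is in $\|y\|$, so the first lemma's computation cannot be transcribed as-is. The remedy — a second invocation of Lemma \ref{rapp} to transfer Gaussian decay from $\|x\|$ back to $\|y\|$ — succeeds only because the strict inequality $\gamma<1$ leaves a positive margin for both $\epsilon$ and $\epsilon'$, which confirms the role of this standing assumption in the unbounded-domain setting.
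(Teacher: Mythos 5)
Your proposal is correct and follows essentially the same route as the paper: after bounding the kernel and the denominator exactly as in Lemma \ref{lem1}, the paper also applies the inequality $\|y\|^{\delta}\le(1+\epsilon)\|x\|^{\delta}+C_{\epsilon}\|x-y\|^{\delta}$ from Lemma \ref{rapp} a second time to transfer the decay from $\|x\|$ to $\|y\|$, ending with the condition $\gamma(1+\epsilon)^2\le1$ (you use two separate parameters $\epsilon,\epsilon'$ with $\gamma(1+\epsilon)(1+\epsilon')\le1$, which is an immaterial variation). Your identification of the asymmetry with Lemma \ref{lem1} and of the role of $\gamma<1$ matches the paper's argument exactly.
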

\begin{proof}
Let $\varphi \in L^{\infty} (\mathbb{R}^d)$.
\begin{align*}
\left\|\int_{D} \Omega(x,.)\varphi(x)dx\right\|_{\infty}&=\sup_{y\in D} \int_{D}\dfrac{K(x,y)\overline{\mu}(x)}{\left[\displaystyle\int_{D}K(z,y)\overline{\mu}(dz) \right]^{\gamma}}\varphi(x)dx,\\
&\leq \frac{C}{e^{-a\gamma C_\epsilon r^{\delta}}}\sup_{y\in D}\int_{D}  \dfrac{\mathbf{1}_{\|x-y\|\leq \underline{R}}e^{-a\|x\|^{\delta}}}{\left[\displaystyle\int_{C(y,l_y,\alpha)\cap B(y,r)}dz\right]^{\gamma}e^{-a\gamma (1+\epsilon)\|y\|^{\delta}}}dx\|\varphi\|_{\infty},\\
\left\|\int_{D} \Omega(x,.)\varphi(x)dx\right\|_{\infty}&\leq C\sup_{y\in \R^d}\int_{\R^d}  \mathbf{1}_{\|x-y\|\leq \underline{R}}e^{-a\|x\|^{\delta}}e^{a\gamma (1+\epsilon)\|y\|^{\delta}}dx\|\varphi\|_{\infty}.
\end{align*}
Morover, $\|y\|^{\delta} \leq  (1+\epsilon) \|x\|^{\delta} + C_{\epsilon}\|x-y\|^{\delta}$. Then
\begin{align*}
\left\|\int_{D} \Omega(x,.)\varphi(x)dx\right\|_{\infty}&\leq C\sup_{y\in \R^d}\int_{\R^d}  \mathbf{1}_{\|x-y\|\leq \underline{R}}e^{aC_{\epsilon}\|x-y\|^{\delta}}e^{-a\left(1-\gamma (1+\epsilon)^2\right)\|y\|^{\delta}} dx\|\varphi\|_{\infty}\\
&\leq Ce^{aC_{\epsilon}\underline{R}^{\delta}}\sup_{y\in \R^d}  \int_{\R^d} \mathbf{1}_{\|x-y\|\leq \underline{R}}dx\|\varphi\|_{\infty},
\end{align*}
where we have choosen $\epsilon>0$ such that $\gamma(1+\epsilon)^2\leq 1$. The results follows.
\end{proof}
\begin{lemma}
	\label{lem3}
	Let $T>0$, and let $(S,\mathfrak{F} )$  be a solution of the first two equations of \eqref{syste}. Then there exists a positive constant $C$ such that:
	\begin{itemize}
		\item $\forall t\in[0;T],$ $\|S(t)\|_{\infty}\leq C;$
		\item $\forall t\in[0;T]$; $\|\mathfrak{F}(t)\|_{\infty}\leq C.$
	\end{itemize}
\end{lemma}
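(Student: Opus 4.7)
The plan is to first extract a pointwise bound on $S$ by treating the first equation of \eqref{syste} as a scalar linear ODE in $t$ for each fixed $x$, and then to plug this bound into the second equation in order to reduce the estimate on $\mathfrak{F}$ to a classical scalar Gronwall inequality. The two a priori estimates of Lemma \ref{lem1} and the ambient density bound from Lemma \ref{Lem1} are the only external ingredients needed.

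\emph{Bound on $S$.} For fixed $x\in D$, I would set $G(t,x):=\int_D\Lambda(x,y)\mathfrak{F}(t,y)\,dy$, so that the first line of \eqref{syste} reads $\partial_tS(t,x)=-S(t,x)\,G(t,x)$, which integrates explicitly to
\[
S(t,x)=S(0,x)\exp\!\Bigl(-\int_0^t G(s,x)\,ds\Bigr).
\]
Lemma \ref{Lem1} gives $S(0,x)=\overline{S}(0)\pi_S(x)/\overline{\mu}(x)\le 1$ wherever $\overline{\mu}(x)>0$. The solutions of interest will satisfy $\mathfrak{F}\ge 0$ (to be justified either by running a Picard iteration starting from $\mathfrak{F}^{(0)}\equiv 0$, which preserves non-negativity at every step, or by appealing to the empirical-measure interpretation underlying \eqref{sy}), hence $G\ge 0$, and therefore $0\le S(t,x)\le S(0,x)\le 1$ uniformly in $(t,x)$.

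\emph{Bound on $\mathfrak{F}$.} Using $|\overline{\lambda}^0(t)|,|\overline{\lambda}(t-s)|\le\lambda^*$ in the second line of \eqref{syste} together with $S(s,x)\le 1$, I would obtain
\[
\|\mathfrak{F}(t)\|_\infty\le\lambda^*\|I(0)\|_\infty+\lambda^*\int_0^t\Bigl\|\int_D\Lambda(\cdot,y)\mathfrak{F}(s,y)\,dy\Bigr\|_\infty ds.
\]
Applying Lemma \ref{lem1} with $\varphi=\mathfrak{F}(s,\cdot)$ bounds the inner norm by $C\|\mathfrak{F}(s)\|_\infty$, while $\|I(0)\|_\infty\le 1$ follows exactly as for $S(0)$ via Lemma \ref{Lem1}. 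Consequently
\[
\|\mathfrak{F}(t)\|_\infty\le\lambda^*+C\lambda^*\int_0^t\|\mathfrak{F}(s)\|_\infty\,ds,
\]
and Gronwall's lemma yields $\|\mathfrak{F}(t)\|_\infty\le\lambda^* e^{C\lambda^* T}$ on $[0,T]$, which together with $\|S(t)\|_\infty\le 1$ gives the two sought bounds.

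The main obstacle will be justifying the non-negativity of $\mathfrak{F}$, on which the clean estimate $S\le 1$ hinges. Without it, combining the two $L^\infty$ estimates only produces a quadratic Gronwall-type inequality of the form $\psi'(t)\le a+b\,e^{C\psi(t)}$, whose solution may blow up in finite time, so no $T$-uniform bound can be recovered. Non-negativity is natural in the epidemiological setting and is inherited from the empirical measures, but it should be made rigorous by a monotone Picard construction (preserving $S\ge 0$ and $\mathfrak{F}\ge 0$ at every iterate) before running the argument above.
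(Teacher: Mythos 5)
Your proposal is correct and follows essentially the same route as the paper: the paper also obtains $S(t,x)\le S(0,x)$ by discarding the (implicitly non-negative) integral term, and then closes the bound on $\mathfrak{F}$ via Lemma \ref{lem1} and Gronwall exactly as you do. The non-negativity of $\mathfrak{F}$ that you rightly flag is used silently in the paper's first step as well, so your remark identifies a point the paper leaves implicit rather than a divergence in method.
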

\begin{proof}
Let $t\in [0;T]$ and $\forall x \in D$.
\begin{align*}
S(t,x)&\leq S(0,x)\nonumber\\
\|S(t)\|_{\infty}&\leq\|S(0)\|_{\infty}\nonumber\\
&\leq C.
\end{align*}
\begin{align*}
\mathfrak{F}(t,x) &=\overline{\lambda}^0(t)I(0,x)+\int_0^t\overline{\lambda}(t-s)S(s,x)\int_{D} \Lambda(x,y)\mathfrak{F}(s,y)dyds\\
\|\mathfrak{F}(t)\|_{\infty} &\leq\lambda^*\|I(0)\|_{\infty}+\lambda^*\int_0^t \|S(s)\|_{\infty}\left\|\int_{D} \Lambda(.,y)\mathfrak{F}(s,y)dy\right\|_{\infty}ds\\
&\leq \lambda^*\|I(0)\|_{\infty}+\lambda^*C\int_0^t \left\|\int_{D} \Lambda(.,y)\mathfrak{F}(s,y)dy\right\|_{\infty}ds
\end{align*}
Combining with the result of Lemma \ref{lem1}, we obtain
\begin{align*}
\|\mathfrak{F}(t)\|_{\infty} &\leq\lambda^*\|I(0)\|_{\infty}+\lambda^*C\int_0^t \|\mathfrak{F}(s)\|_{\infty}ds
\end{align*}
Applying Gronwall's inequality, we deduce that
\begin{align*}
\|\mathfrak{F}(t)\|_{\infty} &\leq C,\quad  C:=\lambda^*\|I(0)\|_{\infty}e^{\lambda^*CT}, \quad t\leq T
\end{align*}
\end{proof}
\begin{proof}\textit{of Proposition \ref{oo}}.
Step 1: We first show that for all $t\geq 0$ any solution of \eqref{sy}, $\left(\overline{\mu}^S_t,\overline{\mu}_t^{\mathfrak{F}},\overline{\mu}_t^{I},\overline{\mu}_t^{R}\right)$ is absolutely continuous with respect to the measure $\overline{\mu}$, and the densities\\ $\left(S(t,.), \mathfrak{F}(t,.), I(t,.) ,R(t,.)\right)$ verify \eqref{syste}.
 From the first equation of \eqref{sy}, $\overline{\mu}^S_t\leq \overline{\mu}^S_0$. Since $\overline{\mu}^S_0$ is absolutely continuous,  $\overline{\mu}^S_t$ has the same property, and we denote its density by $\overline{\mu}^S(t,x)$. By setting $\overline{\mu}^S(t,x)= S(t,x)\overline{\mu}(x)$, thus $S(t,)$ is the density of $\overline{\mu}^S_t$ with respect to the measure $\overline{\mu}$.\\ 
From the third equation of \eqref{sy}, $\displaystyle \overline{\mu}_t^{I}\leq  \overline{\mu}_0^{I}+ \int_0^t \overline{\Gamma}(s,.)\overline{\mu}^{S}_sds$, thus $\overline{\mu}_t^{I}$ is absolutely continuous , since $\overline{\mu}^I_0$ is absolutely continuous, as well as $\overline{\mu}^{S}_s$ for all s. 
A similar argument applies  to $ \overline{\mu}_t^{\mathfrak{F}}$ and $ \overline{\mu}_t^{R}$.The system of equation \eqref{syste} now follows readily from \eqref{sy}.\\
\smallskip

Step 2: We will verify that  equation \eqref{syste} has at most one solution, which implies that \eqref{sy} has at most one solution.  For that sake, it suffices to show that the system made of the first two equations of \eqref{syste}  has most one solution. The first two equations of the system \eqref{syste} constitute the following system
	\begin{equation}
	\label{sys}
	\left\lbrace
	\begin{aligned}
		S(t,x)&= S(0,x)- \int_0^t\int_{D} \Lambda(x,y)\mathfrak{F}(s,y)S(s,x)dyds,\\
	\mathfrak{F}(t,x) &=\overline{\lambda}^0(t)I(0,x)+\int_0^t\overline{\lambda}(t-s)\int_{D} \Lambda(x,y)\mathfrak{F}(s,y)S(s,x)dyds,\\
 \Lambda(x,y)&=\dfrac{K(x,y)\overline{\mu}(y)}{\left[\displaystyle\int_{D}K(z,y)\overline{\mu}(dz) \right]^{\gamma}}\,.
	\end{aligned}
	\right.
	\end{equation}
	Let $\left(f_1(t,.), g_1(t,.)\right)$ and $\left(f_2(t,.), g_2(t,.)\right)$ be two solutions of the above system with the same initial condition.\\
	On the one hand	
	\begin{align}
	f_1(t,x)-f_2(t,x)&= \int_0^t  \left(f_2(s,x)-f_1(s,x)\right)\int_{D}\Lambda(x,y)g_2(s,y)dyds\nonumber\\
	&+\int_0^tf_1(s,x)\int_{D}\Lambda(x,y)\left( g_2(s,y)-g_1(s,y)\right)dyds\nonumber
	\end{align}
Using the result of Lemma \ref{lem1}, we obtain
	\begin{align}
	\|f_1(t,.)-f_2(t,.)\|_{\infty}&\leq C\int_0^t \|f_2(s,.)-f_1(s,.)\|_{\infty}\|g_2(s,.)\|_{\infty}ds\nonumber\\
	&+ C\int_0^t\| g_2(s,.)-g_1(s,.)\|_{\infty} \|f_1(s,.)\|_{\infty}ds\nonumber
	\end{align}
This combined with Lemma \ref{lem3}, implies that
	\begin{align}
	\label{equa5}
	\|f_1(t,.)-f_2(t,.)\|_{\infty}&\leq C\int_0^t \left(\|f_2(s,.)-f_1(s,.)\|_{\infty}+\| g_2(s,.)-g_1(s,.)\|_{\infty}\right) ds
	\end{align}
	Moreover,
	\begin{align}
	g_1(t,x)-g_2(t,x)&=\int_0^t\overline{\lambda}(t-s) (f_1(s,x)-f_2(s,x))\int_{D}\Lambda(x,y)g_1(s,y)dyds\nonumber\\
	&+\int_0^t \overline{\lambda}(t-s)f_1(s,x)\int_{D}\Lambda(x,y) (g_1(s,y)-g_2(s,y))dyds\nonumber\\
	|g_1(t,x)-g_2(t,x)|&\leq C\int_0^t \| f_1(s,.)-f_2(s,.)\|_{\infty}\left\|\int_{D}\Lambda(x,y)g_1(s,y)dy\right\|_{\infty}ds\nonumber\\
	&+C \int_0^t \left\|\int_{D}\Lambda(x,y) (g_1(s,y)-g_2(s,y))dy\right\|_{\infty}ds\nonumber
	\end{align}
	Using the results of Lemma \ref{lem1} and \ref{lem3}, we obtain
	\begin{align}
	\label{equa6}
\|g_1(t,.)-g_2(t,.)\|_{\infty}	&\leq C\int_0^t \left(\| f_1(s,.)-f_2(s,.)\|_{\infty}+\|g_1(s,.)-g_2(s,.)\|_{\infty}\right)ds
	\end{align}
	From \eqref{equa5} and \eqref{equa6}, we infer that
	\begin{align*}
	\|g_1(t,.)-g_2(t,.)\|_{\infty}+\|f_1(t,.)-f_2(t,.)\|_{\infty}&\leq C \int_0^t  \left(\|g_1(s,.)-g_2(s,.)\|_{\infty}+\|f_1(s,.)-f_2(s,.)\|_{\infty}\right)ds
	\end{align*}
	Using Gronwall's inequality, we conclude that 
	\begin{align*}
	\|g_1(t,.)-g_2(t,.)\|_{\infty}+\|f_1(t,.)-f_2(t,.)\|_{\infty}&=0. 
	\end{align*}
\end{proof}

$M_n$ and $D_n$ being defined as below Assumption  \ref{ass2}, we study the dynamics of the disease in this domain . A susceptible individual $i$ located in $D_{n}$, becomes infected  at time $t$ at rate
\begin{equation}
\label{2}
\begin{aligned}
\overline{\Gamma}^N_{n}(t,X^i)&=\frac{1}{N}\displaystyle  \sum_{j\in\mathfrak{I}_N} \frac{K(X^{i},X^{j})}{\left[\displaystyle \frac{1}{N} \sum_{\ell=1}^N K(X^{\ell},X^{j})\mathbf{1}_{\|X^{\ell}\|\leq M_n}\right]^{\gamma}} \lambda_{-j}(t) \mathbf{1}_{\|X^j\|\leq M_n} \\
&+ \frac{1}{N}\sum_{j\in\mathfrak{S}_N} \frac{K(X^{i},X^{j})}{\left[\displaystyle \frac{1}{N} \sum_{\ell=1}^N K(X^{\ell},X^{j})\mathbf{1}_{\|X^{\ell}\|\leq M_n}\right]^{\gamma}} \lambda_{j}(t- \tau_{n}^N(j))\mathbf{1}_{\|X^j\|\leq M_n}.
\end{aligned}
\end{equation}
where $\tau_{n}^N(j)$ is the infection time of the initially susceptible individual $j$ . Like in section \ref{k}, at time $t\geq 0$ we define the following measures:
\begin{equation}
\left\lbrace
\begin{aligned}
\overline{\mu}_{n,t}^{S,N}&=\frac{1}{N}\sum_{i\in\mathfrak{S}_N} \delta_{X^{i}}\mathbf{1}_{\|X^i\|\leq M_n}-\frac{1}{N} \sum_{i\in\mathfrak{S}_N} A^N_{n,i}(t)\delta_{X^{i}}\mathbf{1}_{\|X^i\|\leq M_n}\\\
\overline{\mu}_{n,t}^{I,N}&=\frac{1}{N}\sum_{i\in\mathfrak{I}_N} \delta_{X^{i}}\mathbf{1}_{\|X^i\|\leq M_n}- \frac{1}{N}\sum_{i\in\mathfrak{I}_N} \mathbf{1}_{\eta_{-i}\leq  t}\delta_{X^{i}}\mathbf{1}_{\|X^i\|\leq M_n}\\
&+\frac{1}{N}\sum_{i\in\mathfrak{S}_N}A^N_{n,i}(t)\delta_{X^{i}}\mathbf{1}_{\|X^i\|\leq M_n}-\frac{1}{N}
\sum_{i\in\mathfrak{S}_N}\int_0^t\mathbf{1}_{\eta_{i}\leq t-s}dA^N_{n,i}(s)\delta_{X^{i}}\mathbf{1}_{\|X^i\|\leq M_n}\\
\overline{\mu}_{n,t}^{R,N}&=\frac{1}{N}\sum_{i\in\mathfrak{R}_N} \delta_{X^{i}}\mathbf{1}_{\|X^i\|\leq M_n}+\frac{1}{N}\sum_{i\in\mathfrak{I}_N} \mathbf{1}_{\eta_{-i}\leq t}\delta_{X^{i}}\mathbf{1}_{\|X^i\|\leq M_n}\\
&+\frac{1}{N}\sum_{i\in\mathfrak{S}_N} \int_0^t\mathbf{1}_{\eta_{i}\leq t-s}dA^N_{n,i}(s)\delta_{X^{i}}\mathbf{1}_{\|X^i\|\leq M_n}\\\
\overline{\mu}_{n,t}^{N}
&=\frac{1}{N}\sum_{i\in\mathfrak{S}_N} \delta_{X^{i}}\mathbf{1}_{\|X^i\|\leq M_n}+ \frac{1}{N}\sum_{i\in\mathfrak{I}_N} \delta_{X^{i}}\mathbf{1}_{\|X^i\|\leq M_n}+ \frac{1}{N}\sum_{i\in\mathfrak{R}_N} \delta_{X^{i}}\mathbf{1}_{\|X^i\|\leq M_n}\\
\overline{\mu}_{n,t}^{\mathfrak{F},N} &=  \frac{1}{N}\sum_{i\in\mathfrak{I}_N} \lambda_{-i}(t)\delta_{X^{i}}\mathbf{1}_{\|X^i\|\leq M_n} +  \frac{1}{N}\sum_{i\in\mathfrak{S}_N} \int_0^t \lambda_i(t-s)dA^N_{n,i}(s)\delta_{X^{i}}\mathbf{1}_{\|X^i\|\leq M_n},
\end{aligned}
\right.
\end{equation}
 with $\tau_{n}^N(i):= \inf\{t > 0,\;A_{n,i}^N(t) = 1\}$, where
\begin{align*}
A_{n,i}^N(t)&=	 \int_{0}^{t}\int_{0}^{\infty}\mathbf{1}_{A^N_{n,i}(s^{-}) = 0}\mathbf{1}_{u\leq \overline{\Gamma}_{n}^{N}(s^-,X^i)}Q^{i}(ds,du),\quad \text{where}\\
\overline{\Gamma}_{n}^{N}(t,x)&=\int_{D} \dfrac{K(x,y)}{\left[\displaystyle \int_{D_{n}}K(z,y)\overline{\mu}^{N}(dz) \right]^{\gamma}}\overline{\mu}_{n,t}^{\mathfrak{F},N}(dy).
\end{align*}
$\{Q^{i},\;i \geq 1\}$ are standard Poisson random measures on $\mathbb{R}_{+}^2$ which are mutually independent and do not depend upon n. $D_{n}$ is a bounded set, thus from the above work (see Theorem 3.1 of \cite{AK-EP}), we deduce that $\left(\overline{\mu}_{n}^{S,N},\overline{\mu}_{n}^{\mathfrak{F},N},\overline{\mu}_{n}^{I,N},\overline{\mu}_{n}^{R,N} \right)$ converges, as $N \rightarrow \infty$ in probability to $(\overline{\mu}_{n}^{S}, \overline{\mu}_{n}^{\mathfrak{F}},\overline{\mu}_{n}^{I},\overline{\mu}_{n}^{R})$ such that for all $\varphi \in C_b(D)$, $ t\geq 0,$ 
\begin{equation}
\label{gg}
\left\lbrace
\begin{aligned}
(\overline{\mu}_{n,t}^{S},\varphi)&= (\overline{\mu}_{n,0}^{S},\varphi)- \int_0^t\int_{D}\varphi(x)\overline{\Gamma}_{n}(s,x)\overline{\mu}_{n,s}^{S}(dx)ds,\\
(\overline{\mu}_{n,t}^{\mathfrak{F}},\varphi) &=\overline{\lambda}^0(t)(\overline{\mu}^I_{n,0},\varphi)+\int_0^t\overline{\lambda}(t-s)\int_{D}\varphi(x)\overline{\Gamma}_{n}(s,x)\overline{\mu}_{n,s}^{S}(dx)ds,\\
(\overline{\mu}_{n,t}^{I},\varphi)&= (\overline{\mu}_{n,0}^{I},\varphi)F_0^c(t)+\int_0^tF^c(t-s)\int_{D} \varphi (x)\overline{\Gamma}_{n}(s,x)\overline{\mu}_{n,s}^S(dx)ds,\\
(\overline{\mu}_{n,t}^{R},\varphi)&=(\overline{\mu}_{n,0}^{R},\varphi)+ (\overline{\mu}_{n,0}^{I},\varphi)F_0(t)+\int_0^t F(t-s) \int_{D} \varphi (x)\overline{\Gamma}_{n}(s,x)\overline{\mu}_{n,s}^S(dx)ds,\\
\overline{\Gamma}_{n}(t,x)&=\int_{D} \dfrac{K(x,y)}{\left[\displaystyle\int_{D_{n}}K(z,y)\overline{\mu}(dz) \right]^{\gamma}}\overline{\mu}_{n,t}^{\mathfrak{F}}(dy).
\end{aligned}
\right.
\end{equation}
A proof similar to that of Proposition 3.1  \label{AK-EP} shows that, the system \eqref{gg} admits at most one solution  $\left\{\overline{\mu}^S_{n,t},\overline{\mu}_{n,t}^{\mathfrak{F}},\overline{\mu}_{n,t}^{I},\overline{\mu}_{n,t}^{R}, t \geq 0\right\}$   which is absolutely continuous with respect to the measure $\overline{\mu}$, with the densities $\left\{ S_{n}(t,.),\mathfrak{F}_{n}(t,.),I_{n}(t,.) ,R_{n}(t,.), t\geq 0\right\}$ satisfying for all $x\in D$
\begin{equation}
\label{syste1}
\left\lbrace
\begin{aligned}
S_{n}(t,x)&= S_{n}(0,x)- \int_0^t\int_{D} \Lambda_{n}(x,y)\mathfrak{F}_{n}(t,y)S_{n}(s,x)dyds\\
\mathfrak{F}_{n}(t,x) &=\overline{\lambda}^0(t)I_{n}(0,x)+\int_0^t\overline{\lambda}(t-s)\int_{D} \Lambda_{n}(x,y)\mathfrak{F}_{n}(s,y)S_{n}(s,x)dyds.\\
I_{n}(t,x)&= I_{n}(0,x)F_0^c(t)+\int_0^t F^c(t-s)\int_{D} \Lambda_{n}(x,y)\mathfrak{F}_{n}(s,y)S_{n}(s,x)dyds\\
R_{n}(t,x)&= R_{n}(0,x)+I_{n}(0,x)F_0(t)\\
&+\int_0^tF(t-s) \int_{D} \Lambda_{n}(x,y)\mathfrak{F}_{n}(s,y)S_{n}(s,x)dxdyds\\
\Lambda_{n}(x,y)&=\dfrac{K(x,y)\overline{\mu}(y)}{\left[\displaystyle\int_{D_{n}}K(z,y)\overline{\mu}(dz) \right]^{\gamma}}.\end{aligned}
\right.
\end{equation}

\bigskip

We shall use below the following construction. Consider the partition of $\R^d$ made of translates of the hypercube $(0,\frac{a}{\sqrt{d}}]^d$, where the value of $a$ will be specified in the proof of the next Lemma.
Let $(\mathfrak{A}_1, \mathfrak{A}_2,\ldots)$ denote the subset of the elements of that partition which are contained in $D$. We assume that the numbering is chosen in such a way that the sequence of the distances from the center of $\mathfrak{A}_k$ to $0$ is a non decreasing function of $k$. 

We next define the mapping $q:[M_1\vee a,+\infty)\mapsto \mathbb{N}$  which to each $u\ge M_1\vee a$ associates 
\[ q(u)=\inf\{k\ge1,\ \mathfrak{A}_{k+1}\cap B(0,u)^c\not=\emptyset\}\,.\]
In other words, $(\mathfrak{A}_1,\ldots,\mathfrak{A}_{q(u)})$ is the list of all elements of our partition which are included in $D\cap B(0,u)$.
\begin{lemma}
\label{impor}
For all $n\ge 1$,  $y\in D_{n}$, there exists $  \alpha ,r>0$ and $1\leq k \leq q(M_n)$ such that $\mathfrak{A}_k \subset C(y,l_y, \alpha)\cap B(y,r)$.
\end{lemma}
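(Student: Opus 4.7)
The plan is to exhibit $\mathfrak{A}_k$ concretely, by first inscribing a Euclidean ball in $C(y,l_y,\alpha)\cap B(y,r)$ and then using the freedom in the partition scale $a$ (which is to be specified in this very proof) to force a single element of the partition to lie inside that ball.

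I would start from the hypothesis just stated before the lemma: each $D_n$ satisfies the last property of Assumption \ref{ass2}. Thus there exist $\alpha, r>0$ (which I would take uniform in $n$, as the statement permits them to depend on $n$) such that for every $y\in D_n$ one can choose a unit vector $l_y$ with
\[
C(y,l_y,\alpha)\cap B(y,r)\subseteq D_n = D\cap B(0,M_n).
\]
This is the crucial strengthening over the original assumption on $D$: near the boundary of $B(0,M_n)$ the direction given by the original cone condition on $D$ may point outward and produce a cone that leaves $B(0,M_n)$; working with the cone condition on $D_n$ instead forces $l_y$ to be chosen so that the cone-ball stays inside $B(0,M_n)$.

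Next I would perform the elementary geometry. Setting $z_0:=y+\tfrac{r}{2}l_y$ and $\rho:=\tfrac{r}{2}\sin\alpha$, I would verify that $B(z_0,\rho)\subseteq C(y,l_y,\alpha)\cap B(y,r)$: the inclusion in $B(y,r)$ is the triangle inequality, and for the cone, writing $w=z_0+v$ with $\|v\|\le\rho$ and decomposing $v=v_\parallel l_y+v_\perp$ with $v_\perp\perp l_y$, a short computation shows
\[
\bigl(\tfrac{r}{2}+v_\parallel\bigr)^2\tan^2\alpha + v_\parallel^2 \;\ge\; \tfrac{r^2}{4}\sin^2\alpha \;\ge\;\|v\|^2,
\]
which is equivalent to $\langle w-y,l_y\rangle\ge\cos\alpha\,\|w-y\|$. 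Now I specify the partition scale by demanding $a<\tfrac{r}{2}\sin\alpha$. Since the partition cubes have side $a/\sqrt{d}$ and hence diameter $a$, the unique cube $\mathfrak{A}_k$ containing the point $z_0$ satisfies $\mathfrak{A}_k\subseteq\overline{B(z_0,a)}\subseteq B(z_0,\rho)\subseteq C(y,l_y,\alpha)\cap B(y,r)$.

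Finally, the inclusion $\mathfrak{A}_k\subseteq C(y,l_y,\alpha)\cap B(y,r)\subseteq D_n\subseteq D\cap B(0,M_n)$ shows that $\mathfrak{A}_k$ belongs to the enumeration and lies in $B(0,M_n)$, so the description of $q(M_n)$ as the number of partition cubes included in $D\cap B(0,M_n)$ gives $1\le k\le q(M_n)$. The main obstacle I expect is the first step: deriving $l_y$ that keeps the cone inside $B(0,M_n)$ rather than merely inside $D$, which is why the statement reduces to a quantitative inscribed-ball computation only once one invokes the cone property on $D_n$ itself.
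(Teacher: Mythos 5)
Your proposal is correct and follows essentially the same route as the paper: both proofs inscribe a ball centred at a point $y+c\,l_y$ on the cone's axis inside $C(y,l_y,\alpha)\cap B(y,r)$ (the paper takes $c=\tfrac{r}{1+\sin\alpha}$ with inscribed radius $a=\tfrac{r\sin\alpha}{1+\sin\alpha}$, you take $c=\tfrac r2$ with radius $\tfrac r2\sin\alpha$ and require $a<\tfrac r2\sin\alpha$), use that the cone condition holds for $D_n$ itself so the cone--ball set stays in $D\cap B(0,M_n)$, and then observe that the partition cube of diameter $a$ containing the centre lies inside the inscribed ball. The only differences are cosmetic choices of constants, so there is nothing further to flag.
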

\begin{proof}
Let $n\ge1$ and $y\in D_{n}$. According to the hypothesis, $\exists \alpha,r>0$ such that $C(y,l_y, \alpha)\cap B(y,r) \subset D_{n}$.
 Let now $u(y):=y+\frac{r}{1+\sin(\alpha)}l_y$ and $a:=\frac{r\sin(\alpha)}{1+\sin(\alpha)}.$ Then,  $B(u(y),a)\subset C(y,l_y, \alpha)\cap B(y,r)  $. In fact,  $\forall z\in B(u(y),a)$ we have
 \begin{itemize}
 \item $ \|y-z\|\leq \|y-u(y)\|+\|u(y)-z\| \leq \frac{r}{1+\sin(\alpha)}+\frac{r\sin(\alpha)}{1+\sin(\alpha)}=r\implies z \in B(y,r)$ .
\item The minimum distance from $u(y)$ to the boundary of the cone $C(y,l_y,\alpha)$ is\\ $\|u(y)-y\|\sin (\alpha)= \frac{r\sin (\alpha)}{1+\sin(\alpha)}\geq \|z-u(y)\|\implies z \in C(y,l_y, \alpha)$.
\end{itemize}
Let $(e_1,\cdots,e_d)$ denote the orthonormal basis of $\R^d$ which has been implicitly used in the definition of the sequence $(\mathfrak{A}_k,\ k\ge1)$. The hypercube $H:=\left\{ u(y)+\displaystyle \sum_{i=1}^d t_i e_i, |t_i|\leq \frac{a}{\sqrt{d}}\right\}$ is contained in $B(u(y),a)$. 
It is plain that there exists $1\le k\le q(M_n)$ such that $\mathfrak{A}_k\subset H$: we can choose the unique $k$ s.t. $\mathfrak{A}_k$  contains $u(y)$.
\end{proof}
The same argument as that used in the proof of Lemma \ref{lem3} yields the following result
\begin{lemma}
	\label{lem4}
	Let $T>0$, and let $(S_{n},\mathfrak{F}_{n} , n\in \mathbb{N})$  be a solution of the first two equations of \eqref{syste1}. Then there exists a positive constant $C$ such that: $\forall n \in \mathbb{N}^*$,
	\begin{itemize}
		\item $\forall t\in[0;T],$ $\|S_{n}(t)\|_{\infty}\leq C;$
		\item $\forall t\in[0;T]$; $\|\mathfrak{F}_{n}(t)\|_{\infty}\leq C.$
	\end{itemize}
\end{lemma}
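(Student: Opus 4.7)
The plan is to transport the proof of Lemma \ref{lem3} verbatim, checking at each step that every constant can be chosen independently of $n$. The bound on $S_n$ is immediate: from the first equation of \eqref{syste1} and the non-negativity of $\mathfrak{F}_n$ and $S_n$, the map $t\mapsto S_n(t,x)$ is nonincreasing, hence $\|S_n(t)\|_\infty\le\|S_n(0)\|_\infty\le\|S(0)\|_\infty$, which is a constant that manifestly does not depend on $n$. The only nontrivial task is therefore to bound $\|\mathfrak{F}_n(t)\|_\infty$ uniformly in $n$, and for this I need an analogue of Lemma \ref{lem1} for the kernel $\Lambda_n$ with $n$-independent constants.

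The crux of the argument is the following claim: for every $\varphi\in L^\infty(\R^d)$, $\|\int_{D}\Lambda_n(\cdot,y)\varphi(y)\,dy\|_\infty\le C\|\varphi\|_\infty$, with $C$ independent of $n$. Inspecting the proof of Lemma \ref{lem1}, the only place $n$ could possibly enter is the lower bound on the denominator $\int_{D_n}K(z,y)\overline{\mu}(dz)$, since this integration set is the strictly smaller $D_n$ rather than $D$. This is precisely where Lemma \ref{impor} intervenes: for any $y\in D_n$ it produces a hypercube $\mathfrak{A}_k$ with $k\le q(M_n)$, hence $\mathfrak{A}_k\subset D_n$, satisfying $\mathfrak{A}_k\subset C(y,l_y,\alpha)\cap B(y,r)$, whose Lebesgue volume $(a/\sqrt d)^d$ is a positive constant independent of $n$ and $y$. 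Combining this with $K(z,y)\ge\underline{c}$ on $B(y,r)$, the lower bound $\overline{\mu}(z)\ge c_0 e^{-a\|z\|^\delta}$, and Lemma \ref{rapp} applied to $\|z\|^\delta\le(\|y\|+r)^\delta\le(1+\epsilon)\|y\|^\delta+C_\epsilon r^\delta$, I obtain
\begin{equation*}
\int_{D_n}K(z,y)\overline{\mu}(dz)\ge c_1 \, e^{-a(1+\epsilon)\|y\|^\delta}
\end{equation*}
with $c_1>0$ independent of $n$. From this point the estimate proceeds exactly as in Lemma \ref{lem1}: using $K(x,y)\le C\mathbf{1}_{\|x-y\|\le\underline{R}}$, $\overline{\mu}(y)\le C_0 e^{-a\|y\|^\delta}$, and choosing $\epsilon>0$ so that $\gamma(1+\epsilon)\le 1$, which is possible because $\gamma<1$, the exponential in $\|y\|$ cancels favorably and the integral against $\varphi$ is bounded by $C\|\varphi\|_\infty$.

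With the $n$-uniform kernel estimate in hand, the proof finishes exactly as in Lemma \ref{lem3}: inserting $\varphi=\mathfrak{F}_n(s,\cdot)$ into the estimate, using $\lambda^*$ as an upper bound for $\overline{\lambda}^0$ and $\overline{\lambda}$, and using the uniform bound on $\|S_n(s)\|_\infty$ established above, the second equation of \eqref{syste1} yields $\|\mathfrak{F}_n(t)\|_\infty\le\lambda^*\|I(0)\|_\infty+C\int_0^t\|\mathfrak{F}_n(s)\|_\infty\,ds$, and Gronwall's inequality delivers a bound depending only on $T$ and the initial data. I expect the main obstacle to be precisely the verification that the lower bound on $\int_{D_n}K(z,y)\overline{\mu}(dz)$ is $n$-uniform; this is essentially the reason Lemma \ref{impor} was introduced, since the hypercubes $\mathfrak{A}_k$ furnish a single geometric object of fixed size that fits inside every $D_n$ near every $y\in D_n$. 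Everything else is a mechanical transcription of Lemma \ref{lem3}.
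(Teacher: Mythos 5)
Your proposal is correct and follows exactly the route the paper intends, since the paper's own proof of Lemma \ref{lem4} is simply the remark that the argument of Lemma \ref{lem3} carries over; your added verification that the lower bound on $\int_{D_n}K(z,y)\overline{\mu}(dz)$ is uniform in $n$ is precisely the point that needs checking. (A minor remark: you do not strictly need Lemma \ref{impor} here, since the standing assumption that each $D_n$ satisfies the cone property already gives $C(y,l_y,\alpha)\cap B(y,r)\subseteq D_n$ with $n$-independent Lebesgue measure $m$; the hypercubes $\mathfrak{A}_k$ are really needed only for the empirical-measure estimates later on.)
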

We now prove that the deterministic model in $D$ is well approximate by the deterministic model in $D_n$
\begin{theorem}
\label{Thé1}
	$(\overline{\mu}_{n}^{S}, \overline{\mu}_{n}^{\mathfrak{F}},\overline{\mu}_{n}^{I},\overline{\mu}_{n}^{R})$ converges in  $\mathfrak{D}^4_{\mathcal{M}}$ to $(\overline{\mu}^{S}, \overline{\mu}^{\mathfrak{F}},\overline{\mu}^{I},\overline{\mu}^{R})$, as $n \rightarrow \infty$.
\end{theorem}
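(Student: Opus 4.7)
The plan is to compare the two deterministic systems \eqref{syste} and \eqref{syste1}, whose only differences are the truncated initial conditions ($S_n(0,\cdot)$ and $I_n(0,\cdot)$ vanish outside $D_n$) and the replacement of $\Lambda$ by $\Lambda_n$, which truncates the integral in the denominator to $D_n$. Because $\overline{\mu}$ has exponential decay, the natural working norm is the $L^1(\overline{\mu}\,dx)$ norm on the densities: initial-condition discrepancies vanish by dominated convergence, while the kernel discrepancy $\Lambda_n-\Lambda$ is supported on $\{\|y\|>M_n-\underline{R}\}\cap D_n$ and will contribute $o_n(1)$ once integrated against $\overline{\mu}$. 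A coupled Gronwall inequality for $(S_n-S,\mathfrak{F}_n-\mathfrak{F})$ then closes the argument, and the analogous bounds for $I_n-I$ and $R_n-R$ follow immediately from the integral representations in \eqref{syste1}. For any $\varphi\in C_b(D)$ one has $|(\overline{\mu}_{n,t}^{S}-\overline{\mu}_t^{S},\varphi)|\leq\|\varphi\|_\infty\|S_n(t)-S(t)\|_{L^1(\overline{\mu})}$, and similarly for the other three components, so uniform-in-$t$ convergence of the densities in $L^1(\overline{\mu})$ yields uniform-in-$t$ weak convergence of the measures, hence \emph{a fortiori} convergence in $\mathfrak{D}^{4}_{\mathcal{M}}$.

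The key analytic ingredient is the observation that $\Lambda_n(x,y)=\Lambda(x,y)$ whenever $B(y,\underline{R})\cap D\subset D_n$, in particular whenever $y\in D_n$ with $\|y\|\leq M_n-\underline{R}$; this follows from Assumption \ref{ass2}(iii). On the thin shell $\{M_n-\underline{R}<\|y\|\leq M_n\}$, the geometric Lemma \ref{impor} together with the exponential lower bound on $\overline{\mu}$ provides, for $\epsilon>0$ chosen so that $\gamma(1+\epsilon)<1$, a bound $\int_{D_n}K(z,y)\overline{\mu}(dz)\geq c\,e^{-a(1+\epsilon)\|y\|^{\delta}}$ uniform in $y\in D_n$. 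Consequently both $\Lambda_n(x,y)$ and $\Lambda(x,y)$ are dominated by $C\mathbf{1}_{\|x-y\|\leq\underline{R}}\,e^{-a(1-\gamma(1+\epsilon))\|y\|^{\delta}}$, so after Fubini the shell's contribution to any relevant double integral is controlled by $\int_{\|y\|>M_n-\underline{R}}\overline{\mu}(y)\,dy$, which is $o_n(1)$ by the exponential upper bound of Assumption \ref{ass2}.

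For the Gronwall step, exploit the closed form $S(t,x)=S(0,x)\exp\!\bigl(-\int_0^t H(s,x)\,ds\bigr)$, where $H(s,x):=\int_D\Lambda(x,y)\mathfrak{F}(s,y)\,dy$, and its analogue for $S_n$. Using $|e^{-a}-e^{-b}|\leq|a-b|$ for $a,b\geq 0$ and integrating in $x$ against $\overline{\mu}$ yields
\begin{equation*}
\|S_n(t)-S(t)\|_{L^1(\overline{\mu})} \leq \|S_n(0)-S(0)\|_{L^1(\overline{\mu})} + \int_0^t\!\!\int_D S(0,x)\,|H_n(s,x)-H(s,x)|\,\overline{\mu}(x)\,dx\,ds.
\end{equation*}
Splitting $|H_n-H|\leq\int_D|\Lambda_n-\Lambda|\mathfrak{F}_n\,dy+\int_D\Lambda|\mathfrak{F}_n-\mathfrak{F}|\,dy$ and swapping integrals, the first term yields an $o_n(1)$ contribution by the shell estimate and Lemma \ref{lem4}, while the second is bounded by $C\|\mathfrak{F}_n(s)-\mathfrak{F}(s)\|_{L^1(\overline{\mu})}$, once one uses $\int_D\Lambda(x,y)S(0,x)\overline{\mu}(x)\,dx=\overline{\mu}(y)\int_D\Omega(x,y)S(0,x)\,dx\leq C\,\overline{\mu}(y)$ from Lemma \ref{lem2}. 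A parallel treatment of $\mathfrak{F}_n-\mathfrak{F}$ (using Lemmas \ref{lem1}, \ref{lem3}, \ref{lem4} to dominate $H_n$ and $S$) gives a coupled integral inequality, and standard Gronwall then delivers $\sup_{t\in[0,T]}\bigl(\|S_n(t)-S(t)\|_{L^1(\overline{\mu})}+\|\mathfrak{F}_n(t)-\mathfrak{F}(t)\|_{L^1(\overline{\mu})}\bigr)\to 0$.

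The principal obstacle is the kernel-tail estimate: obtaining the uniform-in-$y\in D_n$ lower bound on the denominator of $\Lambda_n$ of the correct exponential order requires the cone-ball property of Assumption \ref{ass2}, the hypercube construction of Lemma \ref{impor}, and Lemma \ref{rapp} to transfer the exponential weight between $z$ and $y$. Once these three geometric/analytic ingredients are combined to show that the $|\Lambda_n-\Lambda|$ contribution is genuinely $o_n(1)$ uniformly in the other variables, the remaining Duhamel/Gronwall mechanics is routine.
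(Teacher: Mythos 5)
Your proposal is correct and follows essentially the same route as the paper: compare the densities in $L^1(\overline{\mu})$, isolate the kernel discrepancy $\Lambda-\Lambda_n$ and the initial-condition discrepancy, and close with a coupled Gronwall inequality built on Lemmas \ref{lem1}--\ref{lem4}. The only substantive difference is local: where you observe that $\Lambda_n=\Lambda$ off the shell $\{M_n-\underline{R}<\|y\|\le M_n\}$ and dominate both kernels there, the paper instead applies the mean value theorem to $c\mapsto c^{-\gamma}$ to bound $|\Lambda-\Lambda_n|$ directly; both yield the same $o_n(1)$ estimate (and your exact-cancellation observation, which exploits the compact support of $K$, is arguably the cleaner of the two).
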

\begin{proof}
To prove this result, we prove that   $\left\{\left(S_{n}(t,.), \mathfrak{F}_{n}((t,.), I_{n}((t,.), R_{n}((t,.)\right),  t\geq 0\right\}$ converges in $L^1 (\overline{\mu})$ locally uniformly in t to $\displaystyle \left\{\left(S(t,.), \mathfrak{F}(t,.), I(t,.), R(t,.)\right),  t\geq 0\right\}$, as $n\rightarrow \infty$. We will prove that, locally uniformly in t ,    $(S_{n}(t,.), \mathfrak{F}_{n}(t,.))$ converges in $L^1 (\overline{\mu})$ to $(S(t,.), \mathfrak{F}(t,.))$ as $n\rightarrow \infty$. The rest of the result then follows easily.\\
Let $T>0$.  For all $t\in[0;T]$, we have, exploiting Lemma \ref{lem1}, \ref{lem2} and \ref{lem3}
\begin{align}
	\left|S(t,x)-S_{n}(t,x)\right|&\leq \left|S(0,x)- S_{n}(0,x)\right|\nonumber\\
	&+\left|\int_0^t\int_{D}\left( \Lambda_{n}(x,y)\mathfrak{F}_{n}(s,y)S_{M_n}(s,x)dyds- \Lambda(x,y)\mathfrak{F}(s,y)S(s,x)\right)dyds\right|\nonumber,\\
	&\leq \left\|S(0)-S_{n}0)\right\|_{L^1 (\overline{\mu})}+\int_0^t\left\| (S(s)-S_{n}s))\int_{D}\Lambda(.,y)\mathfrak{F}(s,y)dy\right\|_{L^1 (\overline{\mu})}ds\nonumber\\
&+\int_0^t \left\|S_{n}(s)\int_{D} \Lambda(.,y)(\mathfrak{F}(s,y)-\mathfrak{F}_{n}(s,y))dy\right\|_{L^1 (\overline{\mu})}ds\nonumber\\
&+ \int_0^t\left\| S_{n}(s)\int_{D}\mathfrak{F}_{n}(s,y)\left(\Lambda(.,y)-\Lambda_{n}(.,y)\right)dy\right\|_{L^1 (\overline{\mu})}ds,\nonumber\\
\left\|S(t)-S_{n}(t)\right\|_{L^1 (\overline{\mu})}&\leq \left\|S(0)-S_{n}(0)\right\|_{L^1 (\overline{\mu})}+\int_0^t\left\| (S(s)-S_{n}(s))\right\|_{L^1 (\overline{\mu})}\left\|\int_{D}\Lambda(.,y)\mathfrak{F}(s,y)dy\right\|_{\infty}ds\nonumber\\
&+\int_0^t \left\|S_{n}(s)\right\|_{\infty}\left\|\int_{D} \Lambda(.,y)(\mathfrak{F}(s,y)-\mathfrak{F}_{n}(s,y))dy\right\|_{L^1 (\overline{\mu})}ds\nonumber\\
&+ \int_0^t\left\| S_n(s)\right\|_{\infty}\left\|\int_{D_{n}}\left(\Lambda(.,y)-\Lambda_{n}(.,y)\right)\mathfrak{F}_{n}(s,y)dy\right\|_{L^1 (\overline{\mu})}ds,\nonumber\\
&\leq \left\|S(0)-S_{n}(0)\right\|_{L^1 (\overline{\mu})}+C\int_0^t\left\| (S(s)-S_{n}(s))\right\|_{L^1 (\overline{\mu})}\left\|\mathfrak{F}(s)\right\|_{\infty}ds\nonumber\\
&+\int_0^t \left\|S_{n}(s)\right\|_{\infty}\left\|\mathfrak{F}(s)-\mathfrak{F}_{n}(s)\right\|_{L^1 (\overline{\mu})}\left\|\int_{D} \Omega(x,.) dx\right\|_{\infty}ds\nonumber\\
&+ \int_0^t\left\| S_{n}(s)\right\|_{\infty}\left\| \mathfrak{F}_{n}(s)\right\|_{\infty}\left\|\int_{D}\left(\Lambda(.,y)-\Lambda_{n}(.,y)\right)\mathbf{1}_{\|y\|\leq M_n}dy\right\|_{L^1 (\overline{\mu})}ds,\nonumber\\
&\leq \left\|S(0)-S_{n}(0)\right\|_{L^1 (\overline{\mu})}\nonumber \\
&+C\int_0^t\left(\left\| (S(s)-S_{n}(s))\right\|_{L^1 (\overline{\mu})}+\left\|\mathfrak{F}(s)-\mathfrak{F}_{n}(s)\right\|_{L^1 (\overline{\mu})}\right)ds\nonumber\\
\label{ineq1}
&+CT\left\|\int_{D}\left(\Lambda(.,y)-\Lambda_{n}(.,y)\right)\mathbf{1}_{\|y\|\leq M_n}dy\right\|_{L^1 (\overline{\mu})},
\end{align}
\begin{align*}
\left|\mathfrak{F}(t,x)-\mathfrak{F}_{n}(t,x)\right|&\leq \lambda^*\left|I(0,x)- I_{n}(0,x)\right|\\
&+\lambda^*\left|\int_0^t\int_{D}\left( \Lambda_{n}(x,y)\mathfrak{F}_{n}(s,y)S_{n}(s,x)dyds- \Lambda(x,y)\mathfrak{F}(s,y)S(s,x)\right)dyds\right|.
\end{align*}
Using the reasoning leading to \eqref{ineq1}, we deduce that
\begin{align}
\left\|\mathfrak{F}(t)-\mathfrak{F}_{n}(t)\right\|_{L^1 (\overline{\mu})}&\leq \lambda^*\left\|I(0)- I_{M_n}(0)\right\|_{L^1 (\overline{\mu})}\nonumber\\
&+C\int_0^t\left(\left\| S(s)-S_{n}(s)\right\|_{L^1 (\overline{\mu})}+\left\|\mathfrak{F}(s)-\mathfrak{F}_{n}(s)\right\|_{L^1 (\overline{\mu})}\right)ds\nonumber\\
\label{ineq2}
&+CT\left\|\int_{D}\left(\Lambda(.,y)-\Lambda_{n}(.,y)\right)\mathbf{1}_{\|y\|\leq M_n}dy\right\|_{L^1 (\overline{\mu})}.
\end{align}
From \eqref{ineq1} and \eqref{ineq2}, we have
\begin{align}
\left\|S(t)-S_{n}(t)\right\|_{L^1 (\overline{\mu})}+\left\|\mathfrak{F}(t)-\mathfrak{F}_{n}(t)\right\|_{L^1 (\overline{\mu})}&\leq \lambda^*\left\|I(0)- I_{n}(0)\right\|_{L^1 (\overline{\mu})}+\left\|S(0)- S_{n}(0)\right\|_{L^1 (\overline{\mu})}\nonumber\\
\label{ineq3}
&+C\int_0^t\left\| S(s)-S_{n}(s)\right\|_{L^1 (\overline{\mu})}+\left\|\mathfrak{F}(s)-\mathfrak{F}_{n}(s)\right\|_{L^1 (\overline{\mu})}ds\\
&+C\left\|\int_{D}\left(\Lambda(.,y)-\Lambda_{n}(.,y)\right)\mathbf{1}_{\|y\|\leq _{M_n}}dy\right\|_{L^1 (\overline{\mu})}\nonumber.
\end{align}
Applying Gronwall's inequality to \eqref{ineq3},  since $\left\|I(0)- I_{n}(0)\right\|_{L^1 (\overline{\mu})} \xrightarrow[n \to \infty]{} 0$, as well as \\$\left\|S(0)- S_{n}(0)\right\|_{L^1 (\overline{\mu})} \xrightarrow[n \to \infty]{} 0$, it remains to show that   $$\displaystyle \Pi_{n}=\left\|\int_{D}\left(\Lambda(.,y)-\Lambda_{n}(.,y)\right)\mathbf{1}_{\|y\|\leq n} dy\right\|_{L^1 (\overline{\mu})}\xrightarrow[n \to \infty]{} 0$$.
We have,
\begin{align}
\Pi_{M_n}&=\left\|\int_{D_n}\left(\Lambda(.,y)-\Lambda_{n}(.,y)\right)dy\right\|_{L^1 (\overline{\mu})}= \int_{D}\left|\int_{D} \left(\Lambda(x,y)-\Lambda_{n}(x,y)\right)\mathbf{1}_{\|y\|\leq _{M_n}}dy\right|\overline{\mu}(x)dx,\nonumber\\
\Pi_{n}&\leq \int_{D}\int_{D} G_{n} (x,y) dydx,\quad \text{where } \quad G_{n}(x,y):= \left|\Lambda(x,y)-\Lambda_{n}(x,y)\right|\mathbf{1}_{\|y\|\leq _{M_n}}\overline{\mu}(x)\nonumber.
\end{align}
Morever, by using inequality  $\displaystyle |\phi(a)-\phi(b)|\leq |a-b|\sup_{c\in [a;b]} |\phi'(z)|$ with $\phi(c)=\frac{1}{c^{\gamma}}$,  with \\$a=\displaystyle\int_{D}K(z,y)\mathbf{1}_{\|z\|\leq M_n}\overline{\mu}(dz)$ and $b=\displaystyle\int_{D}K(z,y)\overline{\mu}(dz)$ , we obtain:
\begin{align}
G_{n}(x,y)&\leq \gamma  K(x,y)\overline{\mu}(y)\overline{\mu}(x) \dfrac{\displaystyle\int_{D}K(z,y)\mathbf{1}_{\|z\|>M_n}\overline{\mu}(dz)}{\left[\displaystyle\int_{D}K(z,y)\mathbf{1}_{\|z\|\leq M_n}\overline{\mu}(dz) \right]^{\gamma+1}}\mathbf{1}_{\|y\|\leq M_n},\nonumber\\
&\leq C^2 C_0^3 e^{-a\|y\|^{\delta}}e^{-a\|x\|^{\delta}} \mathbf{1}_{\|x-y\|\leq \underline{R}}\dfrac{\displaystyle\int_{D}\mathbf{1}_{\|z-y\|\leq \underline{R}}\mathbf{1}_{\|z\|>M_n}e^{-a\|z\|^{\delta}}dz}{(\underline {c}c_0)^{\gamma+1}\left[\displaystyle\int_{D}\mathbf{1}_{\|z-y\|\leq r}\mathbf{1}_{\|z\|\leq M_n}e^{-a\|z\|^{\delta}}dz\right]^{\gamma+1}}\mathbf{1}_{\|y\|\leq M_n},\nonumber\\
&\leq Ce^{a\left(\gamma M_n^{\delta}-( M_n-\underline{R})^{\delta}\right)} e^{-a\|x\|^{\delta}} \dfrac{\mathbf{1}_{\|x-y\|\leq \underline{R}} }{\left[\displaystyle\int_{C(y,l_y,\alpha)\cap B(y,r)}dz\right]^{\gamma+1}},\nonumber\\
&\leq  Ce^{a\left(\gamma M_n^{\delta}-( M_n-\underline{R})^{\delta}\right)} e^{-a\|x\|^{\delta}} \mathbf{1}_{\|x-y\|\leq \underline{R}}\nonumber.
\end{align}
Now $\displaystyle \int_{\R^d} \int_{\R^d} e^{-a\|x\|^{\delta}} \mathbf{1}_{\|x-y\|\leq \underline{R}}dxdy<\infty,$ and Lemma \ref{rapp} combined with $M_n=M_n-\underline{R}+\underline{R}$ yields that $\gamma M_n^{\delta}-( M_n-\underline{R})^{\delta}\leq (\gamma-\frac{1}{(1+\epsilon)})M_n^{\delta}+C_{\epsilon,\delta}\underline{R}^{\delta}$. Choosing  $\epsilon>0$ small enough such that $\gamma(1+\epsilon)< 1$,  we conclude that $\Pi_{n} \rightarrow 0$, as  $n\rightarrow \infty$.
\end{proof}
The next step in the proof of Theorem \ref{theo1} will be to establish the convergence result along a subsequence $N_k$. For that sake, we will choose such a subsequence so that 
\[(\overline{\mu}^{S,N_k}_{n,t}, \overline{\mu}_{n,t}^{\mathfrak{F},N_k},\overline{\mu}_{n,t}^{I,N_k},\overline{\mu}_{n,t}^{R,N_k})\Rightarrow
 (\overline{\mu}^{S,N_k}_t, \overline{\mu}_t^{\mathfrak{F},N_k},\overline{\mu}_t^{I,N_k},\overline{\mu}_t^{R,N_k})\]
 locally uniformly in $t$ and uniformly in $k$.
Lemma \ref{Lem imp2} below will be the essential step in our argument. Two other Lemmas and one Corollary will serve as a preparation.
Define\\
$\displaystyle Z_n^N(t)= \displaystyle \frac{1}{N}\sum_{i\in\mathfrak{S}_N}  \int_{0}^{t}\int_0^{\infty} \mathbf{1}_{\min \left(\overline{\Gamma}_{n}^{N}(v,X^i);\overline{\Gamma}^{N}(v,X^i)\right)\leq u\leq \max \left(\overline{\Gamma}_{n}^{N}(v,X^i);\overline{\Gamma}^{N}(v,X^i)\right)} \mathbf{1}_{\|X^i\|\leq M_n}\overline{P}^{i}(dv,du)$,\\
$\displaystyle Y^N_n(t)=\displaystyle\frac{1}{N}\sum_{i\in\mathfrak{S}_N}  \int_{0}^{t}\int_{0}^{\infty}\left|\mathbf{1}_{A^N_{n,i}(v^{-}) = 0}-\mathbf{1}_{A^N_{i}(v^{-}) = 0}\right|\mathbf{1}_{u\leq \overline{\Gamma}_{n}^{N}(v,X^i)}\mathbf{1}_{\|X^i\|\leq M_n}\overline{P}^{i}(dv,du)$.
\begin{lemma}
\label{lem}
Let $T>0.$ For any $n\geq 1$, as $N\rightarrow \infty$,  $\displaystyle \sup_{t\in[0;T]}Z_n^N(t)$  and $\displaystyle \sup_{t\in[0;T]}Y_n^N(t)$ converge in probability  to 0.
\end{lemma}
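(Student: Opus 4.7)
Both $Z_n^N$ and $Y_n^N$ are integrals of bounded $\{0,1\}$-valued integrands against the Poisson random measure $\overline{P}^i$ of intensity $dv\,du$, hence they admit Doob-Meyer decompositions with explicit predictable compensators
\begin{align*}
A_Z(t) &= \frac{1}{N}\sum_{i\in\mathfrak{S}_N}\int_0^t\bigl|\overline{\Gamma}^N(v,X^i)-\overline{\Gamma}_n^N(v,X^i)\bigr|\,\mathbf{1}_{\|X^i\|\le M_n}\,dv,\\
A_Y(t) &= \frac{1}{N}\sum_{i\in\mathfrak{S}_N}\int_0^t \bigl|A^N_{n,i}(v^-)-A^N_i(v^-)\bigr|\,\overline{\Gamma}_n^N(v,X^i)\,\mathbf{1}_{\|X^i\|\le M_n}\,dv,
\end{align*}
where we use the identity $|\mathbf{1}_{A=0}-\mathbf{1}_{B=0}|=|A-B|$ for $\{0,1\}$-valued $A,B$. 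Doob's $L^2$-inequality applied to the martingale parts yields $\mathbb{E}[\sup_{t\le T}(Z_n^N-A_Z)^2]\le (4/N)\mathbb{E}[A_Z(T)]$, and similarly for $Y_n^N-A_Y$, because of the $1/N$ prefactor in the definition of each process. These two bounds vanish as $N\to\infty$ once we control $\mathbb{E}[A_Z(T)]$ and $\mathbb{E}[A_Y(T)]$ uniformly in $N$, which follows from $L^\infty$-estimates on $\overline{\Gamma}^N$ and $\overline{\Gamma}_n^N$ obtained exactly as in the proofs of Lemmas~\ref{lem1}-\ref{lem3}, combined with $\lambda_j\le\lambda^\ast$.

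The crux is to prove that the compensators themselves vanish. I would decompose $\overline{\Gamma}^N-\overline{\Gamma}_n^N$ into three pieces: (a) the contribution from infectives $j$ with $\|X^j\|>M_n$, supported on the $\underline{R}$-shell of $\partial D_n$ since $K$ has finite range $\underline{R}$; (b) the discrepancy between the two denominators, likewise supported on that boundary shell; (c) the infectivity-time difference $\lambda_j(v-\tau^N(j))-\lambda_j(v-\tau_n^N(j))$, pointwise bounded by $2\lambda^\ast\mathbf{1}_{A^N_j\neq A^N_{n,j}}$ and hence controlled by the disagreement process $\Delta_n^N(v):=\tfrac{1}{N}\sum_j|A^N_j(v)-A^N_{n,j}(v)|\mathbf{1}_{\|X^j\|\le M_n}$. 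Pieces (a) and (b) yield a deterministic boundary error $\varepsilon(n)$ controlled by Assumption~\ref{ass2}, Lemma~\ref{rapp}, Lemma~\ref{impor} and the $L^\infty$-estimates of Lemmas~\ref{lem1}-\ref{lem2}, along the same lines as the proof of Theorem~\ref{Thé1}. The pathwise inequality $\Delta_n^N(t)\le Z_n^N(t)+Y_n^N(t)$ then holds because each disagreement between $A^N_i$ and $A^N_{n,i}$ is created by a Poisson point of $\overline{P}^i$ that either lies inside the wedge $[\min(\overline{\Gamma}^N,\overline{\Gamma}_n^N),\max(\overline{\Gamma}^N,\overline{\Gamma}_n^N)]$ (counted by $Z_n^N$) or below $\min(\overline{\Gamma}^N,\overline{\Gamma}_n^N)$ at an instant when the two indicators $\mathbf{1}_{A^N_i=0},\mathbf{1}_{A^N_{n,i}=0}$ already disagree (counted by $Y_n^N$).

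Plugging (a)-(c) into $A_Z$ and $A_Y$, together with $\Delta_n^N\le Z_n^N+Y_n^N$, closes a Gronwall-type inequality of the form
\[
\mathbb{E}[A_Z(t)+A_Y(t)] \le C\,\varepsilon(n) + C\int_0^t \mathbb{E}[A_Z(s)+A_Y(s)]\,ds,
\]
from which $\mathbb{E}[A_Z(T)+A_Y(T)]\le C\,\varepsilon(n)\,e^{CT}$. The main obstacle will be the quantitative control of the boundary error $\varepsilon(n)$: one must combine the LLN for $\overline{\mu}^N$ over the shell $\{M_n<\|z\|\le M_n+\underline{R}\}$ with the two-sided exponential bounds $c_0 e^{-a\|z\|^\delta}\le \overline{\mu}(z)\le C_0 e^{-a\|z\|^\delta}$ of Assumption~\ref{ass2} and the estimate $\gamma M_n^\delta-(M_n-\underline{R})^\delta\le (\gamma-\tfrac{1}{1+\epsilon})M_n^\delta+C_{\epsilon,\delta}\underline{R}^\delta$ supplied by Lemma~\ref{rapp} (choosing $\epsilon$ so that $\gamma(1+\epsilon)<1$), in precisely the form required for the subsequence extraction announced in the proof strategy of Theorem~\ref{theo1}. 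Markov's inequality combined with the martingale bound of the first paragraph then delivers the claimed convergence in probability of $\sup_{t\in[0,T]}Z_n^N(t)$ and $\sup_{t\in[0,T]}Y_n^N(t)$.
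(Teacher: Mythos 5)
There is a genuine gap, and it starts with the identification of $\overline{P}^i$. In the paper $\overline{P}^i$ is the \emph{compensated} Poisson measure $P^i(dv,du)-dv\,du$, so $Z_n^N$ and $Y_n^N$ are themselves mean-zero martingales with predictable quadratic variation $\frac{1}{N^2}\sum_{i}\int_0^{T}\bigl|\overline{\Gamma}_n^N(v,X^i)-\overline{\Gamma}^N(v,X^i)\bigr|\mathbf{1}_{\|X^i\|\le M_n}\,dv$ (resp.\ the analogous expression for $Y_n^N$); Doob's inequality then gives a bound of order $C(n)T/N$, which vanishes as $N\to\infty$ \emph{for fixed $n$} --- and that is the entire proof. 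You instead read $\overline{P}^i$ as the raw Poisson measure, split $Z_n^N=(Z_n^N-A_Z)+A_Z$, and then try to show that the compensator $A_Z$ is small. But under your reading the lemma would actually be false: for fixed $n$, $\mathbb{E}[A_Z(T)]=\frac{1}{N}\sum_i\int_0^T\mathbb{E}\bigl[|\overline{\Gamma}^N-\overline{\Gamma}_n^N|\mathbf{1}_{\|X^i\|\le M_n}\bigr]dv$ converges as $N\to\infty$ to a strictly positive limit (the boundary discrepancy between the dynamics in $D$ and in $D_n$ does not disappear when $N$ grows), and your Gronwall bound $\mathbb{E}[A_Z(T)+A_Y(T)]\le C\,\varepsilon(n)e^{CT}$ is small only in the limit $n\to\infty$, which is not the limit the lemma is about. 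So the bulk of your argument (the decomposition (a)--(c), the inequality $\Delta_n^N\le Z_n^N+Y_n^N$, the boundary estimate $\varepsilon(n)$) is aimed at the wrong target; that material belongs to the later Lemma on $\sup_t|A^{N_k}_{n,i}(t)-A^{N_k}_i(t)|$, where $n\to\infty$ is indeed the relevant limit, not here.

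A secondary but real issue is your claim that $\mathbb{E}[A_Z(T)]$ and $\mathbb{E}[A_Y(T)]$ are controlled uniformly in $N$ ``exactly as in Lemmas~\ref{lem1}--\ref{lem3}''. Those lemmas bound the deterministic kernel $\Lambda$ using the pointwise lower bound on $\overline{\mu}$ together with the cone condition; the empirical rates $\overline{\Gamma}^N,\overline{\Gamma}_n^N$ have the random denominator $\bigl[\int K(z,y)\overline{\mu}^N(dz)\bigr]^{\gamma}$, which can be arbitrarily small on an event of small probability, so no deterministic $L^\infty$ bound is available. The paper handles this by introducing the events $\Omega_0^N,\Omega_1^N$ on which $\inf_{\ell\le q(M_n)}\overline{\mu}^N(\mathfrak{A}_\ell)\ge a(n)/2$ (resp.\ $\inf_{\ell\le q(M_n+\underline{R})}\overline{\mu}^N(\mathfrak{A}_\ell)\ge b(n)/2$), whose complements have vanishing probability, and by estimating $\mathbb{E}\bigl[(\sup_t Z_n^N)^2\wedge 1\bigr]$ rather than a plain second moment. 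Your proposal needs this localization step (or an equivalent one) before any of the expectations you invoke are even finite-looking; without it the ``uniform control'' in your first paragraph is unjustified.
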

\begin{proof} We have
\begin{align*}
 \inf_{1\leq \ell\leq q(M_n)}\overline{\mu}(\mathfrak{A}_\ell)=  \inf_{1\leq k\leq q(M_n)} \int_{\mathfrak{A}_\ell} e^{-a\|u\|^\delta}du&\geq c \inf_{1\leq \ell\leq q(M_n); u\in\mathfrak{A}_\ell } e^{-a\|u\|^\delta}, \\
 &\geq ce^{-aM_n^\delta}=:a(n).\\
  \inf_{1\leq \ell\leq q(M_n+\underline{R})}\overline{\mu}(\mathfrak{A}_\ell)=  \inf_{1\leq \ell\leq q(M_n+\underline{R})} \int_{\mathfrak{A}_\ell} e^{-a\|u\|^\delta}du&\geq c \inf_{1\leq \ell\leq q(M_n+\underline{R}); u\in\mathfrak{A}_\ell } e^{-a\|u\|^\delta}\\
 &\geq ce^{-a(\underline{R}+M_n)^\delta}=: b(n).
\end{align*}
Let us consider:
\begin{align*}
\Omega_0^N:= \left\{ \inf_{1\leq \ell\leq q(M_n)}\overline{\mu}^N(\mathfrak{A}_\ell)\geq \frac{a(n)}{2} \right\} ,\quad \text{and}\quad \Omega_1^N:= \left\{ \inf_{1\leq \ell\leq q(M_n+\underline{R})}\overline{\mu}^N(\mathfrak{A}_\ell)\geq \frac{b(n)}{2} \right\} .
\end{align*}
\begin{align*}
\forall \ell\in\{1;\cdots; q(M_n)\}, \quad \overline{\mu}^N(\mathfrak{A}_\ell ) \xrightarrow[N \to \infty]{} \overline{\mu}(\mathfrak{A}_\ell )&\implies  \inf_{1\leq k\leq q(M_n)}\overline{\mu}^N(\mathfrak{A}_\ell ) \xrightarrow[N \to \infty]{}  \inf_{1\leq \ell\leq q(M_n)}\overline{\mu}(\mathfrak{A}_\ell ),\\
\forall \ell\in\{1;\cdots; q(M_n+\underline{R})\}, \quad \overline{\mu}^N(\mathfrak{A}_\ell ) \xrightarrow[N \to \infty]{} \overline{\mu}(\mathfrak{A}_\ell )&\implies  \inf_{1\leq \ell\leq q(M_n+\underline{R})}\overline{\mu}^N(\mathfrak{A}_\ell ) \xrightarrow[N \to \infty]{}  \inf_{1\leq \ell\leq q(M_n+\underline{R})}\overline{\mu}(\mathfrak{A}_\ell ).
\end{align*}
Consequently, $\displaystyle\mathbb{P}\left[\left( \Omega_0^N\right)^c\right] \xrightarrow[N \to \infty]{} 0$ and $\displaystyle\mathbb{P}\left[\left( \Omega_1^N\right)^c\right] \xrightarrow[N \to \infty]{} 0$.\\
\\
On the one hand,
\begin{align*}
\mathbb{E}\left[\left(\sup_{t\in[0;T]}Z^N_n(t))\right)^2 \wedge 1\right]&\leq \mathbb{P}\left[\left( \Omega_0^N \cap \Omega_1^N\right)^c\right]\\
&+ 4 \mathbb{E}\left[\frac{1}{N^2}\sum_{i\in\mathfrak{S}_N}\int_{0}^{T}\mathbf{1}_{\Omega_0^N \cap \Omega_1^N}\left|\overline{\Gamma}_{n}^{N}(s,X^i)-\overline{\Gamma}^{N}(s,X^i)\right|\mathbf{1}_{\|X^i\|\leq M_n}ds\right],\\
&\leq \mathbb{P}\left[\left( \Omega_0^N \right)^c\right]+\mathbb{P}\left[\left( \Omega_1^N \right)^c\right]\\
&+\frac{4}{N^2}\sum_{i\in\mathfrak{S}_N}\int_{0}^{T} \mathbb{E}\left[ \left(\mathbf{1}_{\Omega_0^N \cap \Omega_1^N}\overline{\Gamma}_{n}^{N}(s,X^i)+\mathbf{1}_{\Omega_0^N \cap \Omega_1^N} \overline{\Gamma}^{N}(s,X^i)\right)\mathbf{1}_{\|X^i\|\leq M_n}\right]ds.
\end{align*}
\begin{align*}
\overline{\Gamma}_{n}^{N}(s,X^i)&= \frac{1}{N} \displaystyle \sum_{j\in\mathfrak{I}_N} \frac{K(X^{i},X^{j})}{\left[\displaystyle \int_{D_n} K(z,X^j)\overline{\mu}^N(dz)\right]^{\gamma}} \lambda_{-j}(s) \mathbf{1}_{\|X^j\|\leq M_n}\mathbf{1}_{\|X^i\|\leq M_n}\\
&+\frac{1}{N} \sum_{j\in\mathfrak{S}_N} \frac{K(X^{i},X^{j})}{\left[\displaystyle \int_D K(z,X^j)\mathbf{1}_{\|z\|\leq M_n}\overline{\mu}^N(dz)\right]^{\gamma}}\lambda_{j}(s- \tau_{n}^N(j)) \mathbf{1}_{\|X^j\|\leq M_n}\mathbf{1}_{\|X^i\|\leq M_n},\\
&\leq \frac{C}{N} \displaystyle \sum_{j\in\mathfrak{I}_N} \frac{\mathbf{1}_{\|X^i-X^j\|\leq \underline{R}}}{\left[\displaystyle \int_{C(X^j,l_{X^j},\alpha)\cap B(X^j,r)} \overline{\mu}^N(dz)\right]^{\gamma}}  \mathbf{1}_{\|X^j\|\leq M_n}\\
&+\frac{C}{N} \displaystyle \sum_{j\in\mathfrak{S}_N} \frac{\mathbf{1}_{\|X^i-X^j\|\leq \underline{R}}}{\left[\displaystyle \int_{C(X^j,l_{X^j},\alpha) \cap B(X^j,r)} \overline{\mu}^N(dz)\right]^{\gamma}}  \mathbf{1}_{\|X^j\|\leq M_n},\\
\overline{\Gamma}_{n}^{N}(s,X^i)&\leq  \frac{C}{\left[\displaystyle  \inf_{1\leq k\leq q(M_n)}\overline{\mu}^N(\mathfrak{A}_\ell)\right]^\gamma} \\
\overline{\Gamma}^{N}(s,X^i)\mathbf{1}_{\|X^i\|\leq M_n}&= \frac{1}{N} \displaystyle \sum_{j\in\mathfrak{I}_N} \frac{K(X^{i},X^{j})}{\left[\displaystyle \int_{D} K(z,X^j)\overline{\mu}^N(dz)\right]^{\gamma}} \lambda_{-j}(s)\mathbf{1}_{\|X^i\|\leq M_n} \\
&+\frac{1}{N} \sum_{j\in\mathfrak{S}_N} \frac{K(X^{i},X^{j})}{\left[\displaystyle \int_D K(z,X^j)\overline{\mu}^N(dz)\right]^{\gamma}}\lambda_{j}(s- \tau^N(j)) \mathbf{1}_{\|X^i\|\leq M_n},\\
&\leq \frac{C}{N} \displaystyle \sum_{j\in\mathfrak{I}_N} \frac{\mathbf{1}_{\|X^j\|\leq \underline{R}+M_n}}{\left[\displaystyle \int_{B(X^j,l_{X^j},\alpha) \cap B(X^j,r)} \overline{\mu}^N(dz)\right]^{\gamma}} \mathbf{1}_{\|X^i\|\leq M_n} \\
&+\frac{C}{N} \displaystyle \sum_{j\in\mathfrak{S}_N} \frac{\mathbf{1}_{\|X^j\|\leq \underline{R}+M_n}}{\left[\displaystyle \int_{B(X^j,l_{X^j},\alpha) \cap B(X^j,r)} \overline{\mu}^N(dz)\right]^{\gamma}}  \mathbf{1}_{\|X^i\|\leq M_n},\\
\overline{\Gamma}^{N}(s,X^i)\mathbf{1}_{\|X^i\|\leq M_n}&\leq  \frac{C}{\left[\displaystyle  \inf_{1\leq \ell\leq q(M_n+\underline{R})}\overline{\mu}^N(\mathfrak{A}_\ell)\right]^\gamma} 
\end{align*}

\begin{align*}
\mathbb{E}\left[ \left(\mathbf{1}_{\Omega_0^N \cap \Omega_1^N}\overline{\Gamma}_{n}^{N}(s,X^i)+\mathbf{1}_{\Omega_0^N \cap \Omega_1^N} \overline{\Gamma}^{N}(s,X^i)\right)\mathbf{1}_{\|X^i\|\leq M_n}\right]&\leq C\left(\frac{a(n)}{2}\right)^{-\gamma}+C\left(\frac{b(n)}{2}\right)^{-\gamma}\\
&=C(n).
\end{align*}
\begin{align*}
\mathbb{E}\left[\left(\sup_{t\in[0;T]}Z^N_n(t)\right)^2 \wedge 1\right]&\leq  \mathbb{P}\left[\left( \Omega_0^N \right)^c\right]+\mathbb{P}\left[\left( \Omega_1^N \right)^c\right]+ \frac{C(n)T}{N} \xrightarrow[N \to \infty]{} 0
\end{align*}
On the other hand,
\begin{align*}
\mathbb{E}\left[\left(\sup_{t\in[0;T]}Y^N_n(t)\right)^2 \wedge 1\right]&\leq \frac{4}{N^2}\sum_{i\in\mathfrak{S}_N} \int_{0}^{T}\mathbb{E}\left[\mathbf{1}_{ \Omega_0^N}\left|\mathbf{1}_{A^N_{n,i}(s^{-}) = 0}-\mathbf{1}_{A^N_{i}(s^{-}) = 0}\right| \overline{\Gamma}_{n}^{N}(s,X^i)\mathbf{1}_{\|X^i\|\leq M_n}ds\right]\\
&+ \mathbb{P}\left[\left( \Omega_0^N\right)^c\right]\\
&\leq  \mathbb{P}\left[\left( \Omega_0^N\right)^c\right]+ \frac{C\times (a(n))^{-\gamma} T}{N}\xrightarrow[N \to \infty]{} 0.
\end{align*}
We conclude that  $\displaystyle \sup_{t\in[0;T]} Y^N_n(t)$ and $\displaystyle \sup_{t\in[0;T]}Z^N_n(t)$ converge in probability to 0.
\end{proof}
\begin{coro}
\label{coro}
Given $T>0$, for any subsequence of the sequence $\{N=1,2,\ldots\}$, we can extract a further subsequence $\{N_k,\ k\ge1\}$ such that 
for any $n\ge1$, $(Y^{N_k}_n(t),Z^{N_k}_n(t))\to(0,0)$ as $k\to\infty$ a.s., uniformly for $t\in[0,T]$.
\end{coro}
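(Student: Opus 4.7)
The plan is to deduce the corollary from Lemma \ref{lem} by a standard Cantor diagonal extraction. Recall that if a sequence of nonnegative random variables converges to $0$ in probability, then any subsequence admits a further a.s. convergent subsequence. Since Lemma \ref{lem} gives, for every fixed $n\ge1$, the convergence in probability to $0$ of $\sup_{t\in[0,T]}Y^N_n(t)$ and $\sup_{t\in[0,T]}Z^N_n(t)$, this will be exactly the input we need.

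Let me describe the steps. Given an arbitrary subsequence, call it $\{N^{(0)}_k,\ k\ge1\}$. By Lemma \ref{lem} applied with $n=1$, the quantity $\sup_{t\in[0,T]}\bigl(Y^{N^{(0)}_k}_1(t)+Z^{N^{(0)}_k}_1(t)\bigr)$ converges in probability to $0$, so I can extract a further subsequence $\{N^{(1)}_k\}\subset\{N^{(0)}_k\}$ along which it converges a.s.\ to $0$. Applying Lemma \ref{lem} with $n=2$ to the sequence $\{N^{(1)}_k\}$, I extract $\{N^{(2)}_k\}\subset\{N^{(1)}_k\}$ along which $\sup_{t\in[0,T]}(Y^{N^{(2)}_k}_2(t)+Z^{N^{(2)}_k}_2(t))\to 0$ a.s. Iterating, I obtain for every $n\ge1$ a subsequence $\{N^{(n)}_k\}_k\subset\{N^{(n-1)}_k\}_k$ along which the $n$-th quantity vanishes a.s., uniformly in $t\in[0,T]$.

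I then set $N_k:=N^{(k)}_k$, the diagonal subsequence. For any fixed $n\ge1$, the tail $\{N_k,\ k\ge n\}$ is a subsequence of $\{N^{(n)}_k\}_k$, and therefore $\sup_{t\in[0,T]}\bigl(Y^{N_k}_n(t)+Z^{N_k}_n(t)\bigr)\to 0$ almost surely as $k\to\infty$. Since the sum of two nonnegative quantities tends to $0$ iff each does, this yields the desired joint a.s.\ convergence $(Y^{N_k}_n(t),Z^{N_k}_n(t))\to(0,0)$ uniformly on $[0,T]$, for every $n\ge1$.

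There is essentially no obstacle here: the only thing to verify is that the diagonal subsequence works simultaneously for all $n$, which is immediate from the nested construction. The nontrivial probabilistic content has already been absorbed into Lemma \ref{lem}; the present corollary is purely a repackaging step that will be convenient when, in the next stage of the proof of Theorem \ref{theo1}, one needs a single subsequence along which the approximation by the truncated system $D_n$ holds a.s.\ for every truncation level $n$ simultaneously.
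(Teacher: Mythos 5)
Your proof is correct and follows essentially the same route as the paper: apply the standard fact that convergence in probability yields an a.s.\ convergent further subsequence, iterate over $n$ to obtain nested subsequences, and conclude by the Cantor diagonal argument, noting that for $k\ge n$ the diagonal sequence is a subsequence of the $n$-th extracted one. No issues.
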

\begin{proof}
Thanks to Lemma \ref{lem}, we  first extract a subsequence $\{N_{1,k},\ k\ge1\}$ such that, as $k\to\infty$, $(Y^{N_{1,k}}_1(t),Z^{N_{1,k}}_1(t))\to(0,0)$ a.s., uniformly for $t\in[0,T]$.
We next extract from $\{N_{1,k},\ k\ge1\}$ a second subsequence $\{N_{2,k},\ k\ge1\}$ such that, as $k\to\infty$, $(Y^{N_{2,k}}_2(t),Z^{N_{2,k}}_2(t))\to(0,0)$  a.s., uniformly for $t\in[0,T]$. We do that successively for $n=3, 4,\ldots$, so that along the subsequence $\{N_{n,k},\ k\ge1\}$
(extracted from the subsequence $\{N_{n-1,k},\ k\ge1\}$), as $k\to\infty$, $(Y^{N_{n,k}}_n(t),Z^{N_{n,k}}_n(t))\to(0,0)$  a.s., uniformly for $t\in[0,T]$.

Finally we define the diagonal sequence $\{N_k:=N_{k,k},\ k\ge1\}$, along which it is clear that for any $n\ge1$, $(Y^{N_k}_n(t),Z^{N_k}_n(t))\to(0,0)$ as $k\to\infty$ a.s., uniformly for $t\in[0,T]$. Indeed, for $k\ge n$, the sequence $N_k$ is a subsequence of $N_{n,k}$.
\end{proof}
\bigskip

We consider the quantity
\[\frac{1}{N}\sum_{i\in\mathfrak{S}_N}\left(\inf_{k\le q(\|X_i\|+\underline{R})}\bar{\mu}^N(\mathfrak{A}_k)\right)^{-\gamma}
{\bf1}_{\|X_i\|\le M_n}=\int_{B(0,M_n)}\left(\inf_{k\le q(\|x\|+\underline{R})}\bar{\mu}^N(\mathfrak{A}_k)\right)^{-\gamma}\bar{\mu}^{S,N}_{0}(dx)\,.\]

We have the following result.
\begin{lemma}
\label{lemEP}
As $N\to\infty$,
\[\int_{B(0,M_n)}\left(\inf_{k\le q(\|x\|+\underline{R})}\bar{\mu}^N(\mathfrak{A}_k)\right)^{-\gamma}\bar{\mu}^{S,N}_0(dx)
\to\int_{B(0,M_n)}\left(\inf_{k\le q(\|x\|+\underline{R})}\bar{\mu}(\mathfrak{A}_k)\right)^{-\gamma}\bar{\mu}^{S}_0(dx)\ \text{ a.s.}\]
Moreover, 
$\displaystyle \sup_n\int_{B(0,M_n)}\left(\inf_{k\le q(\|x\|+\underline{R})}\bar{\mu}(\mathfrak{A}_k)\right)^{-\gamma}\bar{\mu}^{S}_0(dx)<\infty$.
\end{lemma}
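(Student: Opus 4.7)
For the almost sure convergence, the key observation is that with $n$ fixed, the map $k_0(x):=q(\|x\|+\underline{R})$ is a radial step function on $B(0,M_n)$ taking at most $q(M_n+\underline{R})$ distinct values, and only the cubes $\mathfrak{A}_1,\ldots,\mathfrak{A}_{q(M_n+\underline{R})}$ ever enter the infimum. Since the $(X^i)_{i\ge1}$ are i.i.d.\ with density $\bar{\mu}$ (Assumption \ref{ass1} combined with Lemma \ref{Lem1}), the classical SLLN yields, for each of these finitely many $k$,
\[\bar{\mu}^N(\mathfrak{A}_k)=\frac{1}{N}\sum_{i=1}^N\mathbf{1}_{X^i\in\mathfrak{A}_k}\xrightarrow[N\to\infty]{\text{a.s.}}\bar{\mu}(\mathfrak{A}_k)>0.\]
On a common a.s.\ event, taking the minimum over any subset of $\{1,\ldots,q(M_n+\underline{R})\}$ preserves convergence, and since $k_0$ realizes only finitely many such subsets, one infers
\[\sup_{x\in B(0,M_n)}\bigl|f_N(x)-f(x)\bigr|\xrightarrow[N\to\infty]{\text{a.s.}}0,\qquad f_N(x):=\Bigl(\inf_{k\le k_0(x)}\bar{\mu}^N(\mathfrak{A}_k)\Bigr)^{-\gamma},\]
with $f$ defined analogously using $\bar{\mu}$.

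I then decompose
\[\int_{B(0,M_n)}f_N\,d\bar{\mu}_0^{S,N}-\int_{B(0,M_n)}f\,d\bar{\mu}_0^S=\int(f_N-f)\,d\bar{\mu}_0^{S,N}+\Bigl[\int f\,d\bar{\mu}_0^{S,N}-\int f\,d\bar{\mu}_0^S\Bigr].\]
The first summand is bounded by $\sup|f_N-f|\cdot\bar{\mu}_0^{S,N}(B(0,M_n))\le\sup|f_N-f|\to 0$ a.s. For the second, $f$ is a bounded deterministic measurable function on $B(0,M_n)$ (since each $\bar{\mu}(\mathfrak{A}_k)$ is a strictly positive deterministic constant), and since $(\mathbf{1}_{E_0^i=0}f(X^i)\mathbf{1}_{\|X^i\|\le M_n})_i$ is an i.i.d.\ sequence, the SLLN gives $\int f\,d\bar{\mu}_0^{S,N}\to\int f\,d\bar{\mu}_0^S$ a.s.

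For the uniform bound in $n$, I use that every $\mathfrak{A}_k$ entering the infimum satisfies $\mathfrak{A}_k\subset B(0,\|x\|+\underline{R})$, so by Assumption \ref{ass2},
\[\bar{\mu}(\mathfrak{A}_k)\ge c_0\int_{\mathfrak{A}_k}e^{-a\|y\|^\delta}dy\ge c\,e^{-a(\|x\|+\underline{R})^\delta},\]
where $c>0$ depends only on $c_0$ and on the common cube volume. Hence
\[\Bigl(\inf_{k\le q(\|x\|+\underline{R})}\bar{\mu}(\mathfrak{A}_k)\Bigr)^{-\gamma}\le C e^{a\gamma(\|x\|+\underline{R})^\delta}.\]
Combined with the majoration $\bar{\mu}_0^S(dx)\le(C_0/\bar{S}(0))e^{-a\|x\|^\delta}dx$ (since $\bar{S}(0)\pi_S\le\bar{\mu}$) and with Lemma \ref{rapp} giving $(\|x\|+\underline{R})^\delta\le(1+\epsilon)\|x\|^\delta+C_{\epsilon,\delta}\underline{R}^\delta$, this yields
\[\int_{B(0,M_n)}\Bigl(\inf_{k\le q(\|x\|+\underline{R})}\bar{\mu}(\mathfrak{A}_k)\Bigr)^{-\gamma}d\bar{\mu}_0^S(x)\le C e^{a\gamma C_{\epsilon,\delta}\underline{R}^\delta}\int_{\R^d}e^{a(\gamma(1+\epsilon)-1)\|x\|^\delta}dx.\]
Since $\gamma<1$, I may choose $\epsilon>0$ with $\gamma(1+\epsilon)<1$, so the right-hand side is finite and independent of $n$, which gives the claimed supremum bound.

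The main obstacle is the uniform convergence step, because $f$ possesses jump discontinuities on the spherical shells where $q(\|\cdot\|+\underline{R})$ changes value, so a naive continuous mapping argument does not apply; what rescues it is the finiteness of the range of $k_0$ on $B(0,M_n)$, which reduces the supremum over $x$ to a maximum over finitely many deterministic events and thus follows from the joint SLLN on the finitely many $\bar{\mu}^N(\mathfrak{A}_k)$.
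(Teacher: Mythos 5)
Your proof is correct and follows essentially the same route as the paper: the same two-term decomposition, the same reduction of $\sup_{x}|f_N(x)-f(x)|$ to the a.s.\ convergence of the finitely many masses $\bar{\mu}^N(\mathfrak{A}_k)$, and the same exponential bound $f(x)\le Ce^{a\gamma(\|x\|+\underline{R})^{\delta}}$ integrated against $\bar{\mu}^{S}_0(dx)\le Ce^{-a\|x\|^{\delta}}dx$ for the uniform-in-$n$ estimate. The only (harmless) variations are that you justify $\int f\,d\bar{\mu}^{S,N}_0\to\int f\,d\bar{\mu}^{S}_0$ by a direct SLLN on the i.i.d.\ bounded summands $\mathbf{1}_{E_0^i=0}f(X^i)\mathbf{1}_{\|X^i\|\le M_n}$ --- which neatly sidesteps the discontinuities of $f$ that the paper instead handles via weak convergence together with boundedness and $\bar{\mu}^{S}_0$-a.e.\ continuity --- and that you conclude integrability from Lemma \ref{rapp} with $\gamma(1+\epsilon)<1$ rather than the paper's choice of $\gamma'\in(\gamma,1)$ outside a large ball.
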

\begin{proof}
We have
\begin{align*}
&\left|\int_{B(0,M_n)}\left(\inf_{k\le q(\|x\|+\underline{R})}\bar{\mu}(\mathfrak{A}_k)\right)^{-\gamma}\bar{\mu}^{S}_0(dx)-
\int_{B(0,M_n)}\left(\inf_{k\le q(\|x\|+\underline{R})}\bar{\mu}^N(\mathfrak{A}_k)\right)^{-\gamma}\bar{\mu}^{S,N}_0(dx)\right|\\
&\quad \le\left|\int_{B(0,M_n)}\left(\inf_{k\le q(\|x\|+\underline{R})}\bar{\mu}(\mathfrak{A}_k)\right)^{-\gamma}\left[\bar{\mu}^{S}_0(dx)-
\bar{\mu}^{S,N}_0(dx)\right]\right|\\
&\quad+\left|\int_{B(0,M_n)}\left[\left(\inf_{k\le q(\|x\|+\underline{R})}\bar{\mu}(\mathfrak{A}_k)\right)^{-\gamma}-
\left(\inf_{k\le q(\|x\|+\underline{R})}\bar{\mu}^N(\mathfrak{A}_k)\right)^{-\gamma}\right]\bar{\mu}^{S,N}_0(dx)\right|
\end{align*}
Since  as $N\to\infty$, $\bar{\mu}^{S,N}_0(\cdot)\Rightarrow\bar{\mu}^{S}_0(\cdot)$ and 
$x\mapsto\left(\inf_{k\le q(\|x\|+\underline{R})}\bar{\mu}(\mathfrak{A}_k)\right)^{-\gamma}$ is bounded on $B(0,M_n)$ and 
$\bar{\mu}^{S}_0(\cdot)$ a.e. continuous, the first term on the right tends to $0$ as $N\to\infty$. The convergence to 
$0$ of the second term follows from the facts that 
\[ \sup_{\|x\|\le M_n}\left|\left(\inf_{k\le q(\|x\|+\underline{R})}\bar{\mu}(\mathfrak{A}_k)\right)^{-\gamma}-
\left(\inf_{k\le q(\|x\|+\underline{R})}\bar{\mu}^N(\mathfrak{A}_k)\right)^{-\gamma}\right|\to0,\]
as $N\to\infty$ (this is a consequence of the fact that $(\bar{\mu}^N(\mathfrak{A}_1),\ldots,\bar{\mu}^N(\mathfrak{A}_{q(M_n)}))\to
(\bar{\mu}(\mathfrak{A}_1),\ldots,\bar{\mu}(\mathfrak{A}_{q(M_n)}))$ a.s. as $N\to\infty$), while the total mass of $\bar{\mu}^{S,N}_0(\cdot)$ is less than or equal to $1$ a.s. for all $N$.

We finally prove the second part of the statement.
\[ \left(\inf_{k\le q(\|x\|+\underline{R})}\bar{\mu}(\mathfrak{A}_k)\right)^{-\gamma}\le Cc^{-\gamma}e^{a\gamma(\|x\|+\underline{R})^{\delta}},\]
while
\[\bar{\mu}^{S,N}_n(t,dx)\le \bar{S}(0,x)\bar{\mu}(x)dx\le Ce^{-a\|x\|^\delta}dx\,.\]
Consequently
\begin{align*}
\sup_n\int_{B(0,M_n)}\left(\inf_{k\le q(\|x\|+\underline{R})}\bar{\mu}(\mathfrak{A}_k)\right)^{-\gamma}\bar{\mu}^{S}_0(dx)
&\le C\int_{\R^d}e^{a\gamma(\|x\|+\underline{R})^{\delta}}e^{-a\|x\|^\delta}dx
\end{align*}
It remains to show that the right hand side is finite, which follows from the fact that $\gamma<1$. 
Indeed, the integrand is locally bounded, and choosing any $\gamma'\in(\gamma,1)$, on the complement of the ball centered at $0$ with radius
$R\left((\gamma'/\gamma)^{1/\delta}-1\right)^{-1}$, the integrand is upper bounded by $\exp(-a(1-\gamma')\|x\|^\delta)$, which clearly is integrable on $\R^d$.
\end{proof}

We can now establish our main technical result.

\begin{lemma}
	\label{Lem imp2}
	For any $T>0 $, as $n \rightarrow \infty$,
	\[\mathbb{E}\left[\displaystyle\limsup_{k}\left(\displaystyle\frac{1}{N_k}\sum_{i\in\mathfrak{S}_N}\sup_{t\in[0;T]}\left| A^{N_k}_{n,i}(t)-A^{N_k}_{i}(t)\right|\mathbf{1}_{\|X^i\|\leq M_n}\right)\right]\rightarrow 0\,.\]
\end{lemma}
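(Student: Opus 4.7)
The plan is to exploit the fact that (as the definitions of $Z_n^N$ and $Y_n^N$ tacitly require) the counting processes $A^N_i$ and $A^N_{n,i}$ are driven by the \emph{same} Poisson random measure $\overline{P}^i$, so that their difference admits a pathwise decomposition whose absolute value, summed over $i\in\mathfrak{S}_N$, is controlled exactly by $Z_n^N(T)+Y_n^N(T)$. Then I would apply Corollary \ref{coro} to get a.s. vanishing along the diagonal subsequence and conclude by bounded convergence.

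First, adding and subtracting $\mathbf{1}_{A^N_i(s^-)=0}\mathbf{1}_{u\le\overline{\Gamma}_n^N(s,X^i)}$ in the integrand of $A^N_{n,i}-A^N_i$ yields
\begin{align*}
A^N_{n,i}(t)-A^N_i(t)&=\int_0^t\!\!\int_0^\infty \mathbf{1}_{A^N_i(s^-)=0}\bigl[\mathbf{1}_{u\le\overline{\Gamma}_n^N(s,X^i)}-\mathbf{1}_{u\le\overline{\Gamma}^N(s,X^i)}\bigr]\overline{P}^i(ds,du)\\
&\quad+\int_0^t\!\!\int_0^\infty \bigl[\mathbf{1}_{A^N_{n,i}(s^-)=0}-\mathbf{1}_{A^N_i(s^-)=0}\bigr]\mathbf{1}_{u\le\overline{\Gamma}_n^N(s,X^i)}\overline{P}^i(ds,du).
\end{align*}
Using the elementary identity $|\mathbf{1}_{u\le a}-\mathbf{1}_{u\le b}|=\mathbf{1}_{\min(a,b)<u\le\max(a,b)}$, taking absolute values, and noting that the two resulting nonnegative integrals are nondecreasing in $t$, I get for each $i$ with $\|X^i\|\le M_n$
\[\sup_{t\in[0,T]}\bigl|A^N_{n,i}(t)-A^N_i(t)\bigr|\le \int_0^T\!\!\int_0^\infty \mathbf{1}_{\min\le u\le\max}\,\overline{P}^i(ds,du)+\int_0^T\!\!\int_0^\infty \bigl|\mathbf{1}_{A^N_{n,i}(s^-)=0}-\mathbf{1}_{A^N_i(s^-)=0}\bigr|\mathbf{1}_{u\le\overline{\Gamma}_n^N(s,X^i)}\,\overline{P}^i(ds,du).\]

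Second, I would multiply this inequality by $N^{-1}\mathbf{1}_{\|X^i\|\le M_n}$ and sum over $i\in\mathfrak{S}_N$; by inspection of the definitions, the right-hand side is exactly $Z_n^N(T)+Y_n^N(T)$, so
\[\frac{1}{N}\sum_{i\in\mathfrak{S}_N}\sup_{t\in[0,T]}\bigl|A^N_{n,i}(t)-A^N_i(t)\bigr|\mathbf{1}_{\|X^i\|\le M_n}\le Z_n^N(T)+Y_n^N(T).\]

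Third, Corollary \ref{coro} provides the diagonal subsequence $\{N_k\}$ along which, for every fixed $n$, $Z_n^{N_k}(T)+Y_n^{N_k}(T)\to 0$ almost surely as $k\to\infty$. Hence $\limsup_k$ of the left-hand side of the previous display is $0$ a.s. for every $n$. Since that left-hand side is deterministically bounded by $1$ (being an average over $\mathfrak{S}_N$ of terms in $\{0,1\}$), its $\limsup$ is a bounded a.s.-null random variable, and its expectation is $0$ for every $n$; the stated convergence as $n\to\infty$ is then immediate. The only genuinely nontrivial step is identifying the add-and-subtract decomposition so that $Z_n^N(T)$ and $Y_n^N(T)$ appear cleanly after summation; everything else is mechanical.
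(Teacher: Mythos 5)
Your first step (the add-and-subtract decomposition of $A^{N}_{n,i}-A^{N}_{i}$ and the pathwise bound on the supremum) matches the paper's. The gap is in your second step: the right-hand side of your summed inequality is \emph{not} $Z_n^N(T)+Y_n^N(T)$. Those two quantities are defined with the \emph{compensated} Poisson measures $\overline{P}^i(dv,du)=P^i(dv,du)-dv\,du$ --- this is forced by the proof of Lemma \ref{lem}, which controls them through $\mathbb{E}[(\sup_t Z_n^N(t))^2\wedge1]$ via a Doob/quadratic-variation estimate of order $C(n)T/N$, a computation that only makes sense for martingales. Your raw Poisson integrals therefore equal $Z_n^N(T)+Y_n^N(T)$ \emph{plus} the two compensator terms
\[\int_0^T\frac{1}{N}\sum_{i\in\mathfrak{S}_N}\bigl|\overline{\Gamma}_n^N(s,X^i)-\overline{\Gamma}^N(s,X^i)\bigr|\mathbf{1}_{\|X^i\|\le M_n}\,ds
\quad\text{and}\quad
\int_0^T\frac{1}{N}\sum_{i\in\mathfrak{S}_N}\bigl|A^N_{n,i}(s)-A^N_i(s)\bigr|\,\overline{\Gamma}_n^N(s,X^i)\mathbf{1}_{\|X^i\|\le M_n}\,ds,\]
which do \emph{not} vanish as $N\to\infty$ for fixed $n$. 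These are the terms $\Psi_1^{N_k}$ and $\Psi_2^{N_k}$ in the paper, and handling them is the entire substance of the proof: $\Psi_1$ requires the decomposition $\overline{B}_n^{N_k}\le\Sigma_1+\Sigma_2+\Sigma_3$, where $\Sigma_1,\Sigma_2$ are bounded by quantities of the form $Ce^{-a(\frac{1}{1+\epsilon}-\gamma)M_n^\delta}$ (using Lemma \ref{impor} and the cone condition) that go to zero only as $n\to\infty$, while $\Sigma_3$ and $\Psi_2$ feed the difference $|A_{n,j}^{N_k}-A_j^{N_k}|$ back into the estimate (with Lemma \ref{lemEP} controlling the weights $(\inf_{\ell}\overline{\mu}(\mathfrak{A}_\ell))^{-\gamma}$), so that Gronwall's inequality closes the loop.

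A sanity check would have flagged the problem: your argument concludes that the expectation in the lemma equals $0$ for \emph{every fixed} $n$, i.e.\ that the dynamics localized to $D_n$ coincides with the full dynamics in the large-population limit for any finite $n$. That cannot be true, since $\overline{\Gamma}_n^N$ and $\overline{\Gamma}^N$ genuinely differ (infectious individuals outside $B(0,M_n)$ contribute to the latter but not to the former), and it would render the ``$\to0$ as $n\to\infty$'' formulation pointless. The lemma is an $n\to\infty$ statement precisely because the compensator contributions decay only in $n$, not in $N$.
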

\begin{proof}	
We fix $T>0$ arbitrary. For any $t\in[0;T]$,
	\begin{align}
	A^{N_k}_{n,i}(s)-A^{N_k}_{i}(s)&=\int_{0}^{s}\int_{0}^{\infty}\left(\mathbf{1}_{A^{N_k}_{n,i}(v^{-}) = 0}\mathbf{1}_{u\leq \overline{\Gamma}_{n}^{N_k}(v,X^i)}-\mathbf{1}_{A^{N_k}_{i}(v^{-}) = 0}\mathbf{1}_{u\leq \overline{\Gamma}^{N_k}(v,X^i)}\right)P^{i}(dv,du)\nonumber\\
	&=\int_{0}^{s}\int_{0}^{\infty}\mathbf{1}_{A^{N_k}_{i}(v^{-}) = 0}\left(\mathbf{1}_{u\leq \overline{\Gamma}_{n}^{N_k}(v,X^i)}- \mathbf{1}_{u\leq \overline{\Gamma}^{N_k}(v,X^i)}\right)P^{i}(dv,du)\nonumber\\ 
	&+\int_{0}^{s}\int_{0}^{\infty}\left(\mathbf{1}_{A^{N_k}_{n,i}(v^{-}) = 0}-\mathbf{1}_{A^{N_k}_{i}(v^{-}) = 0}\right)\mathbf{1}_{u\leq \overline{\Gamma}_{n}^{N_k}(v,X^i)}P^{i}(dv,du)\nonumber\\
\sup_{s\in[0;t]}\left|A^{N_k}_{n,i}(s)-A^{N_k}_{i}(s) \right|&\leq \int_{0}^{t}\int_{\min \left(\overline{\Gamma}_{n}^{N_k}(v,X^i);\overline{\Gamma}^{N_k}(v,X^i)\right)}^{ \max \left(\overline{\Gamma}_{n}^{N_k}(v,X^i);\overline{\Gamma}^{N_k}(v,X^i)\right)}P^{i}(dv,du)\nonumber\\
	&+\int_{0}^{t}\int_{0}^{\infty} \left| A^{N_k}_{n,i}(v)-A^{N_k}_{i}(v) \right|\mathbf{1}_{u\leq \overline{\Gamma}_{n}^{N_k}(v,X^i)}P^{i}(dv,du)\nonumber
	\end{align}
	\begin{align}
		\label{lem7eq1}
	\mathbb{E}\left[\displaystyle\limsup_{k}\left(\displaystyle\frac{1}{N_k}\sum_{i\in\mathfrak{S}_{N_k}}\sup_{s\in[0;t]}\left| A^{N_k}_{n,i}(s)-A^{N_k}_{i}(s)\right|\mathbf{1}_{\|X^i\|\leq M_n}\right)\right]&\leq  \Psi^{N_k}_1(n,t)+ \Psi^{N_k}_2(n,t)+\Psi^{N_k}_3(n,t)+\Psi^{N_k}_4(n,t);
	\end{align}
	where 
	
	\begin{align*}
\Psi^{N_k}_1(n,t)&= \int_0^t \mathbb{E}\left[\displaystyle\limsup_{k}\left(\displaystyle\frac{1}{N_k}\sum_{i\in\mathfrak{S}_{N_k}}\left| \overline{\Gamma}_{n}^{N_k}(s,X^i)-\overline{\Gamma}^{N_k}(s,X^i)\right|\mathbf{1}_{\|X^i\|\leq M_n}\right)\right]ds,\\
\Psi^{N_k}_2(n,t)&=\int_0^t \mathbb{E}\left[\displaystyle\limsup_{k}\left(\displaystyle\frac{1}{N_k}\sum_{i\in\mathfrak{S}_{N_k}}\sup_{v\in[0;s]}\left| A^{N_k}_{n,i}(v)-A^{N_k}_{i}(v) \right| \overline{\Gamma}_{n}^{N_k}(v,X^i)\mathbf{1}_{\|X^i\|\leq M_n}\right)\right]ds,\\
\Psi^{N_k}_3(n,t)&=\mathbb{E}\left[\displaystyle\limsup_{k}\left(\displaystyle\frac{1}{N_k}\!\!\sum_{i\in\mathfrak{S}_{N_k}} \!\! \int_{0}^{t}\!\!\int_0^{\infty}\!\!\!\! \mathbf{1}_{\min \left(\overline{\Gamma}_{n}^{N_k}(v,X^i);\overline{\Gamma}^{N_k}(v,X^i)\right)\leq u\leq \max \left(\overline{\Gamma}_{n}^{N_k}(v,X^i);\overline{\Gamma}^{N_k}(v,X^i)\right)} \mathbf{1}_{\|X^i\|\leq M_n}\overline{P}^{i}(dv,du)\right)\right],\\
\Psi^{N_k}_4(n,t)&=\mathbb{E}\left[\displaystyle\limsup_{k}\left(\displaystyle\frac{1}{N_k}\sum_{i\in\mathfrak{S}_{N_k}}  \int_{0}^{t}\int_{0}^{\infty}\left|\mathbf{1}_{A^{N_k}_{n,i}(v^{-}) = 0}-\mathbf{1}_{A^{N_k}_{i}(v^{-}) = 0}\right|\mathbf{1}_{u\leq \overline{\Gamma}_{n}^{N_k}(v,X^i)}\mathbf{1}_{\|X^i\|\leq M_n}\overline{P}^{i}(dv,du)\right)\right].
\end{align*}
According to Corollary \ref{coro}, for any $n\ge1$,
  \begin{align}
\label{lem7eq10}
  \Psi^{N_k}_3(n,t)= \Psi^{N_k}_4(n,t)=0\,.
  \end{align}
\\

Let $\displaystyle \overline{B}_n^{N_k}(s,X^i):= \left| \overline{\Gamma}_{n}^{N_k}(s,X^i)-\overline{\Gamma}^{N_k}(s,X^i)\right|$ and  $\displaystyle \overline{\Upsilon}_n^{i,N_k}(s,X^i):=\left| A^{N_k}_{n,i}(s)-A^{N_k}_{i}(s) \right| \overline{\Gamma}_{n}^{N_k}(s,X^i)$.
On the one hand,
\begin{align}
\overline{B}_n^{N_k}(s,X^i )&\leq  \frac{1}{N_k} \displaystyle  \sum_{j\in\mathfrak{I}_{N_k}} \left|\frac{K(X^{i},X^{j})\lambda_{-j}(s)\mathbf{1}_{\|X^j\|\leq M_n}}{\left[\displaystyle \int_{D_n} K(z,X^j)\overline{\mu}^{N_k}(dz)\right]^{\gamma}}- \frac{K(X^{i},X^{j})\lambda_{-j}(s)}{\left[\displaystyle \int_D K(z,X^j)\overline{\mu}^{N_k}(dz)\right]^{\gamma}}\right| \nonumber\\
&+\frac{1}{N_k}\sum_{j\in\mathfrak{S}_{N_k}} \left|\frac{K(X^{i},X^{j})\lambda_{j}(s- \tau_{M_n}^{N_k}(j))\mathbf{1}_{\|X^j\|\leq M_n}}{\left[\displaystyle \int_{D_n} K(z,X^j)\overline{\mu}^{N_k}(dz)\right]^{\gamma}}-\frac{K(X^{i},X^{j})\lambda_{j}(s- \tau^{N_k}(j))}{\left[\displaystyle \int_D K(z,X^j)\overline{\mu}^{N_k}(dz)\right]^{\gamma}}\right|\nonumber\\
&\leq \frac{\lambda^*}{N_k} \displaystyle  \sum_{j\in\mathfrak{I}_{N_k}} \frac{K(X^{i},X^{j})\displaystyle \int_D K(z,X^j)\mathbf{1}_{\|z\|> M_n}\overline{\mu}^{N_k}(dz)}{\left[\displaystyle \int_{D_n} K(z,X^j)\overline{\mu}^{N_k}(dz)\right]^{\gamma+1}}\mathbf{1}_{\|X^j\|\leq M_n}\nonumber\\
&+\frac{\lambda^*}{N_k} \displaystyle  \sum_{j\in\mathfrak{S}_{N_k}} \frac{K(X^{i},X^{j})\displaystyle \int_D K(z,X^j)\mathbf{1}_{\|z\|> M_n}\overline{\mu}^{N_k}(dz)}{\left[\displaystyle \int_{D_n} K(z,X^j)\overline{\mu}^{N_k}(dz)\right]^{\gamma+1}}\mathbf{1}_{\|X^j\|\leq M_n}\nonumber\\
&+\frac{\lambda^*}{N_k} \displaystyle  \sum_{j\in\mathfrak{I}_{N_k}}\frac{K(X^{i},X^{j})}{\left[\displaystyle \int_D K(z,X^j)\overline{\mu}^{N_k}(dz)\right]^{\gamma}}\mathbf{1}_{\|X^j\|>M_n}\nonumber\\
&+\frac{\lambda^*}{N_k} \displaystyle  \sum_{j\in\mathfrak{S}_{N_k}}\frac{K(X^{i},X^{j})}{\left[\displaystyle \int_D K(z,X^j)\overline{\mu}^{N_k}(dz)\right]^{\gamma}}\mathbf{1}_{\|X^j\|>M_n}\nonumber\\
&+\frac{\lambda^*}{N_k}\sum_{j\in\mathfrak{S}_{N_k}} \frac{K(X^{i},X^{j})\mathbf{1}_{\|X^j\|\leq M_n}}{\left[\displaystyle \int_{D_n} K(z,X^j)\overline{\mu}^{N_k}(dz)\right]^{\gamma}}\left| A_{n,j}^{N_k}(s)-A_{j}^{N_k}(s)\right|\nonumber\\
&\leq \lambda^* \displaystyle \int_D K(X^{i},y)\frac{\displaystyle \int_D K(z,y)\mathbf{1}_{\|z\|> M_n}\overline{\mu}^{N_k}(dz)}{\left[\displaystyle \int_{D_n} K(z,y) \overline{\mu}^{N_k}(dz)\right]^{\gamma+1}}\mathbf{1}_{\|y\|\leq M_n}\overline{\mu}^{N_k}(dy)\nonumber\\
&+ \lambda^* \displaystyle \int_D \frac{K(X^{i},y)}{\left[\displaystyle \int_D K(z,y)\overline{\mu}^{N_k}(dz)\right]^{\gamma}}\mathbf{1}_{\|y\|>M_n}\overline{\mu}^{N_k}(dy)\nonumber\\
&+\frac{\lambda^*}{N_k}\sum_{j\in\mathfrak{S}_{N_k}} \frac{K(X^{i},X^{j}) \mathbf{1}_{\|X^j\|\leq M_n}}{\left[\displaystyle \int_{D_n} K(z,X^j)\overline{\mu}^{N_k}(dz)\right]^{\gamma}}\left| A_{n,j}^{N_k}(s)-A_{j}^{N_k}(s)\right|\nonumber\\
&\leq C \displaystyle \int_D \mathbf{1}_{\|X^i-y\|\leq \underline{R}}\frac{\displaystyle \int_D \mathbf{1}_{\|z-y\|\leq \underline{R}}\mathbf{1}_{\|z\|> M_n}\overline{\mu}^{N_k}(dz)}{\left[\displaystyle \int_{C(y,l_y,\alpha)\cap B(y,r)} \overline{\mu}^{N_k}(dz)\right]^{\gamma+1}}\mathbf{1}_{\|y\|\leq M_n}\overline{\mu}^{N_k}(dy)\nonumber\\
&+C \displaystyle \int_D \frac{\mathbf{1}_{\|X^i-y\|\leq \underline{R}}}{\left[\displaystyle \int_{C(y,l_y,\alpha)\cap B(y,r)} \overline{\mu}^{N_k}(dz)\right]^{\gamma}}\mathbf{1}_{\|y\|>M_n}\overline{\mu}^{N_k}(dy)\nonumber\\
&+ C\frac{1}{N_k}\sum_{j\in\mathfrak{S}_{N_k}} \frac{\mathbf{1}_{\|X^i-X^j\|\leq \underline{R} }\mathbf{1}_{\|X^j\|\leq M_n}}{\left[\displaystyle \int_{C(X^j,l_{X^j},\alpha)\cap B(X^j,r)} \overline{\mu}^{N_k}(dz)\right]^{\gamma}}\left| A_{n,j}^{N_k}(s)-A_{j}^{N_k}(s)\right|\nonumber\\
\overline{B}_n^{N_k}(s,X^i )\mathbf{1}_{\|X^i\|\leq M_n}&\leq C \displaystyle \int_D \mathbf{1}_{\|X^i-y\|\leq \underline{R}}\frac{\displaystyle \int_D \mathbf{1}_{\|z-y\|\leq \underline{R}}\mathbf{1}_{\|z\|> M_n}\overline{\mu}^{N_k}(dz)}{\left[\displaystyle \int_{C(y,l_y,\alpha)\cap B(y,r)} \overline{\mu}^{N_k}(dz)\right]^{\gamma+1}}\mathbf{1}_{\|y\|\leq M_n}\overline{\mu}^{N_k}(dy) \nonumber\\
&+C \displaystyle \int_D \frac{\mathbf{1}_{\|X^i-y\|\leq \underline{R}}\mathbf{1}_{\|y\|\leq M_n +\underline{R}}}{\left[\displaystyle \int_{C(y,l_y,\alpha)\cap B(y,r)} \overline{\mu}^{N_k}(dz)\right]^{\gamma}}\mathbf{1}_{\|y\|>M_n}\overline{\mu}^{N_k}(dy)\nonumber\\
&+ C\frac{1}{N_k}\sum_{j\in\mathfrak{S}_{N_k}} \frac{\mathbf{1}_{\|X^j\|\leq \underline{R}+\|X^i\|} \mathbf{1}_{\|X^i\|\leq M_n}}{\left[\displaystyle \int_{C(X^j,l_{X^j},\alpha)\cap B(X^j,r)} \overline{\mu}^{N_k}(dz)\right]^{\gamma}}\left| A_{n,j}^{N_k}(s)-A_{j}^{N_k}(s)\right|\mathbf{1}_{\|X^j\|\leq M_n}\nonumber\\
&=\displaystyle \Sigma_1^{N_k}(X^i,n)+\Sigma_2^{N_k}(X^i,n)+\Sigma_3^{N_k}(X^i,n,s)\nonumber
\end{align}
\bigskip
We need to estimate 
\[ \E\left[\limsup_k\left(\frac{1}{N_k}\sum_{i\in\mathfrak{S}_{N_k}}\overline{B}^{N_k}_n(s,X^i){\bf1}_{\|X^i\|\le M_n}\right)\right]\,.\]
For that sake, we shall estimate
\[\E\left[\limsup_k\left(\frac{1}{N_k}\sum_{i\in\mathfrak{S}_{N_k}}\Sigma_j^{N_k}(X^i,n)\mathbf{1}_{\|X^i\|\leq M_n}\right)\right],\]
successively for $j=1, 2, 3$.

By using Lemma \ref{impor},
\begin{align*}
\Sigma_1^{N_k}(X^i,n)&\leq\frac{C}{(\displaystyle \inf_{1\leq \ell\leq q(M_n)}\overline{\mu}^N(\mathfrak{A}_\ell))^{\gamma+1}}\displaystyle \int_{D}\int_{D}\displaystyle \mathbf{1}_{\|X^i-y\|\leq \underline{R}}\mathbf{1}_{\|z-y\|\leq \underline{R}}\mathbf{1}_{\|z\|> M_n}\overline{\mu}^{N_k}(dz)\overline{\mu}^{N_k}(dy),\\
\frac{1}{N_k}\sum_{i\in\mathfrak{S}_{N_k}}\Sigma_1^{N_k}(X^i,n)\mathbf{1}_{\|X^i\|\leq M_n}&\leq \frac{C}{(\displaystyle \inf_{1\leq \ell \leq q(M_n)}\overline{\mu}^{N_k}(\mathfrak{A}_\ell))^{\gamma+1}}\displaystyle \int_{D^3}\displaystyle \mathbf{1}_{\|x-y\|\leq \underline{R}}\mathbf{1}_{\|z-y\|\leq \underline{R}}\mathbf{1}_{\|z\|> M_n}\overline{\mu}^{N_k}(dz)\overline{\mu}^{N_k}(dy) \overline{\mu}_0^{S,N_k}(dx)
\end{align*}
\begin{align}
 \mathbb{E}\left[\displaystyle\limsup_{k}\left(\displaystyle\frac{1}{N_k}\sum_{i\in\mathfrak{S}_{N_k}}\Sigma_1^{N_k}(X^i,n)\mathbf{1}_{\|X^i\|\leq M_n}\right)\right]& \leq\frac{Ce^{-aM_n^{\delta}}e^{-a(M_n-\underline{R})^{\delta}}}{(\displaystyle \inf_{1\leq \ell\leq q(M_n)}\overline{\mu}(\mathfrak{A}_\ell))^{\gamma+1}}\displaystyle \int_{\R^{3d}}\displaystyle \mathbf{1}_{\|x-y\|\leq \underline{R}}\mathbf{1}_{\|z-y\|\leq \underline{R}}e^{-a\|x\|^{\delta}}dzdydx\nonumber\\
 &\leq \frac{Ce^{-aM_n^{\delta}}e^{-a(M_n-\underline{R})^{\delta}}}{(\displaystyle  \inf_{1\leq \ell \leq q(M_n);u\in\mathfrak{A}_\ell} c e^{-a\|u\|^{\delta}})^{\gamma+1}}\displaystyle \left(\int_{B(0;\underline{R})}du\right)^2 \displaystyle \int_{\R^d}e^{-a\|x\|^{\delta}}dx\nonumber\\
 &\leq C e^{a\gamma M_n^{\delta}}e^{-a(M_n-\underline{R})^{\delta}}\nonumber\\
 \label{lem7eq3}
  \mathbb{E}\left[\displaystyle\limsup_{k}\left(\displaystyle\frac{1}{N_k}\sum_{i\in\mathfrak{S}_{N_k}}\Sigma_1^{N_k}(X^i,n)\mathbf{1}_{\|X^i\|\leq M_n}\right)\right]&\leq C e^{-a(\frac{1}{1+\epsilon}-\gamma)M_n^{\delta}},
\end{align}
which tends to $0$ as $n\to\infty$, provided $1+\epsilon<\gamma^{-1}$.
\begin{align*}
\Sigma_2^{N_k}(X^i,n)&\leq \frac{C}{\left(\displaystyle \inf_{1\leq \ell \leq q(M_n+\underline{R})}\overline{\mu}^N(\mathfrak{A}_\ell)\right)^{\gamma}}\displaystyle \int_D\mathbf{1}_{\|X^i-y\|\leq \underline{R}}\mathbf{1}_{\|y\|\leq M_n +\underline{R}}\mathbf{1}_{\|y\|>M_n}\overline{\mu}^{N_k}(dy),\\
 \frac{1}{N_k}\sum_{i\in\mathfrak{S}_{N_k}}\Sigma_2^{N_k}(X^i,n)\mathbf{1}_{\|X^i\|\leq M_n}&\leq  \frac{C}{\left(\displaystyle \inf_{1\leq \ell \leq q(M_n+\underline{R})}\overline{\mu}^{N_k}(\mathfrak{A}_\ell)\right)^{\gamma}}\displaystyle \int_D \int_D\mathbf{1}_{\|x-y\|\leq \underline{R}}\mathbf{1}_{\|y\|>M_n}\overline{\mu}^{N_k}(dy)\overline{\mu}_0^{S,N_k}(dx)
 \end{align*}

\begin{align}
       \mathbb{E}\left[\displaystyle\limsup_{k}\left(\displaystyle\frac{1}{N_k}\sum_{i\in\mathfrak{S}_{N_k}}\Sigma_2^{N_k}(X^i,n)\mathbf{1}_{\|X^i\|\leq M_n}\right)\right]&\leq \frac{Ce^{-aM_n^{\delta}}}{\left(\displaystyle \inf_{1\leq \ell \leq q(M_n+\underline{R})}\overline{\mu}(\mathfrak{A}_\ell)\right)^{\gamma}}\displaystyle \int_{\R^{2d}} \mathbf{1}_{\|x-y\|\leq \underline{R}}e^{-a\|x\|^{\delta}}dydx\nonumber\\
&\leq \frac{Ce^{-aM_n^{\delta}}}{\left(\displaystyle  \inf_{1\leq \ell\leq q(M_n+\underline{R});u\in\mathfrak{B}_\ell} c e^{-a\|u\|^{\delta}}\right)^{\gamma}}\ \displaystyle\left( \int_{B(0;\underline{R}} du \right)\int_{\R^{d}} e^{-a\|x\|^{\delta}}dx\nonumber\\
&\leq C e^{-a(1-\gamma(1+\epsilon))M_n^{\delta}} \nonumber\\
\label{lem7eq4}
 \mathbb{E}\left[\displaystyle\limsup_{k}\left(\displaystyle\frac{1}{N_k}\sum_{i\in\mathfrak{S}_{N_k}}\Sigma_2^{N_k}(X^i,n)\mathbf{1}_{\|X^i\|\leq M_n}\right)\right] &\leq C e^{-a(1-\gamma(1+\epsilon))M_n^{\delta}}.
\end{align}
\begin{align*}
\Sigma_3^{N_k}(X^i,n,s)\mathbf{1}_{\|X^i\|\leq M_n}&\leq \frac{C\mathbf{1}_{\|X^i\|\leq M_n}}{(\displaystyle \inf_{1\leq \ell \leq q (\|X^i\|+ \underline{R})}\overline{\mu}^{N_k}(\mathfrak{A}_\ell))^{\gamma}}\frac{1}{N_k}\sum_{j\in\mathfrak{S}_{N_k}}\left| A_{n,j}^{N_k}(s)-A_{j}^{N_k}(s)\right|\mathbf{1}_{\|X^j\|\leq M_n}\\
\displaystyle\frac{1}{N_k}\sum_{i\in\mathfrak{S}_{N_k}} \Sigma_3^{N_k}(X^i,n,s)\mathbf{1}_{\|X^i\|\leq M_n}&\leq C \displaystyle\frac{1}{N_k}\sum_{i\in\mathfrak{S}_{N_k}}\left(\displaystyle \inf_{1\leq \ell \leq q(\|X^i\|+ \underline{R})}\overline{\mu}^{N_k}(\mathfrak{A}_\ell)\right)^{-\gamma}\mathbf{1}_{\|X^i\|\leq M_n}\\
&\times \frac{1}{N_k}\sum_{j\in\mathfrak{S}_{N_k}}\left| A_{n,j}^{N_k}(s)-A_{j}^{N_k}(s)\right|\mathbf{1}_{\|X^j\|\leq M_n}\\
\displaystyle\limsup_{k}\left(\displaystyle\frac{1}{N_k}\sum_{i\in\mathfrak{S}_{N_k}}\Sigma_3^{N_k}(X^i,n,s)\mathbf{1}_{\|X^i\|\leq M_n}\right)&\leq C \limsup_{k} \left( \displaystyle \int_{B(0, M_n)} \left(\displaystyle \inf_{1\leq \ell \leq q (\|x\|+ \underline{R})}\overline{\mu}^{N_k}(\mathfrak{A}_\ell)\right)^{-\gamma}\overline{\mu}^{S,N_k}_0 (dx)\right)\\
&\times \limsup_{k} \left(\frac{1}{N_k}\sum_{j\in\mathfrak{S}_{N_k}}\sup_{v\in[0;s]}\left| A_{n,j}^{N_k}(s)-A_{j}^{N_k}(s)\right|\mathbf{1}_{\|X^j\|\leq M_n}\right).
\end{align*}
Exploiting Lemma \ref{lemEP}, we deduce that
\begin{align}
 \mathbb{E}&\left[\displaystyle\limsup_{k}\left(\displaystyle\frac{1}{N_k}\sum_{i\in\mathfrak{S}_{N_k}}\Sigma_3^{N_k}(X^i,n,s)\mathbf{1}_{\|X^i\|\leq M_n} \right)\right]\nonumber\\
 &\quad\leq  C\int_{B(0, M_n)}\left(\displaystyle \inf_{1\leq \ell \leq q(\|x\|+ \underline{R})}\overline{\mu}(\mathfrak{A}_\ell)\right)^{-\gamma}\overline{\mu}^{S}_0 (dx)\nonumber\\
 &\quad\times  \mathbb{E}\left[\displaystyle \limsup_{k}\left(\frac{1}{N_k}\sum_{j\in\mathfrak{S}_{N_k}}\sup_{v\in[0;s]}\left| A_{n,j}^{N_k}(v)-A_{j}^{N_k}(v)\right|\mathbf{1}_{\|X^j\|\leq M_n}\right)\right]\nonumber\\
  \label{lem7eq5}
  &\quad\leq C\mathbb{E}\left[\displaystyle \limsup_{k}\left(\frac{1}{N_k}\sum_{j\in\mathfrak{S}_{N_k}}\sup_{v\in[0;s]}\left| A_{n,j}^{N_k}(v)-A_{j}^{N_k}(v)\right|\mathbf{1}_{\|X^j\|\leq M_n}\right)\right]
\end{align}
From  \eqref{lem7eq3},  \eqref{lem7eq4} and \eqref{lem7eq5}, we have, with $\epsilon$ chosen such that $(1+\epsilon)\gamma<1$,
\begin{align}
\Psi^{N_k}_1(n,t)&\leq  C e^{-a(\frac{1}{1+\epsilon}-\gamma)M_n^{\delta}} t+ C e^{-a(1-\gamma(1+\epsilon))M_n^{\delta}}t\nonumber\\
\label{lem7eq8}
&+C \int_0^t \mathbb{E}\left[\displaystyle \limsup_{k}\left(\frac{1}{N_k}\sum_{i\in\mathfrak{S}_{N_k}}\sup_{v\in[0;s]}\left| A_{n,i}^{N_k}(v)-A_{i}^{N_k}(v)\right|\mathbf{1}_{\|X^i\|\leq M_n}\right)\right] ds
\end{align}
On the other hand, we apply Lemma \ref{lemEP} once again,
\begin{align*}
\overline{\Upsilon}_n^{i,N_k}(s,X^i )&=\left| A^{N_k}_{n,i}(s)-A^{N_k}_{i}(s) \right| \overline{\Gamma}_{n}^{N_k}(s,X^i)\\
&\leq  \frac{C\mathbf{1}_{\|X^j\|\leq M_n}}{\left(\displaystyle \inf_{1\leq \ell\leq q(\|X^j\|+r)}\overline{\mu}^{N_k}(\mathfrak{A}_\ell)\right)^{\gamma}}\left| A_{n,i}^{N_k}(s)-A_{i}^{N_k}(s)\right|\\
\overline{\Upsilon}^{i,N_k}(s,X^i,n )\mathbf{1}_{\|X^i\|\leq M_n}&\leq C\frac{1}{N_k}\sum_{j\in\mathfrak{S}_{N_k}}\displaystyle\left( \inf_{1\leq \ell \leq q(\|X^j\|+r)}\overline{\mu}^N(\mathfrak{A}_\ell)\right)^{-\gamma}\mathbf{1}_{\|X^j\|\leq M_n}\left| A_{n,i}^{N_k}(s)-A_{i}^{N_k}(s)\right|\mathbf{1}_{\|X^i\|\leq M_n} 
\end{align*}
\begin{align*}
\frac{1}{N_k}\sum_{i\in\mathfrak{S}_{N_k}}\sup_{v\in[0;s]} \overline{\Upsilon}^{i,N_k}(v,X^i,n )\mathbf{1}_{\|X^i\|\leq M_n}&\leq C\int_{B(0,M_n)} \displaystyle\left( \inf_{1\leq \ell \leq q(\|x\|+r)}\overline{\mu}^{N_k}(\mathfrak{A}_\ell)\right)^{-\gamma} \overline{\mu}^{S,N_k}_0 (dx)\\
&\times \frac{1}{N_k}\sum_{i\in\mathfrak{S}_{N_k}}\sup_{v\in[0;s]}\left| A_{n,i}^{N_k}(v)-A_{i}^{N_k}(v)\right|\mathbf{1}_{\|X^i\|\leq M_n} 
\end{align*}

$\displaystyle \mathbb{E}\left[\limsup_{k}\left(\frac{1}{N_k}\sum_{i\in\mathfrak{S}_{N_k}}\sup_{v\in[0;s]} \overline{\Upsilon}_n^{i,N_k}(v,X^i )\mathbf{1}_{\|X^i\|\leq M_n}\right)\right]\leq  C\int_{B(0,M_n)} \displaystyle\left( \inf_{1\leq \ell \leq q(\|x\|+r)}\overline{\mu}(\mathfrak{A}_\ell)\right)^{-\gamma} \overline{\mu}^{S}_0 (dx)\\
\displaystyle  \times \mathbb{E}\left[\displaystyle \limsup_{k}\left(\frac{1}{N_k}\sum_{i\in\mathfrak{S}_{N_k}}\sup_{v\in[0;s]}\left| A_{n,i}^{N_k}(v)-A_{i}^{N_k}(v)\right|\mathbf{1}_{\|X^i\|\leq M_n}\right)\right].$
 \begin{align}
  \label{lem7eq9}
\Psi^{N_k}_2(n,t))&\leq C\int_0^t \mathbb{E}\left[\displaystyle \limsup_{k}\left(\frac{1}{N_k}\sum_{i\in\mathfrak{S}_{N_k}}\sup_{v\in[0;s]}\left| A_{n,i}^{N_k}(v)-A_{i}^{N_k}(v)\right|\mathbf{1}_{\|X^i\|\leq M_n}\right)\right]ds 
\end{align} 
From \eqref{lem7eq1}, \eqref{lem7eq10}, \eqref{lem7eq8} and  \eqref{lem7eq9}, we have
\begin{align*}  
\mathbb{E}&\left[\displaystyle\limsup_{k}\left(\displaystyle\frac{1}{N_k}\sum_{i\in\mathfrak{S}_{N_k}}\sup_{s\in[0;t]}\left| A^{N_k}_{n,i}(s)-A^{N_k}_{i}(s)\right|\mathbf{1}_{\|X^i\|\leq M_n}\right)\right]\leq C e^{-a(\frac{1}{1+\epsilon}-\gamma)M_n^{\delta}} t+ C e^{-a(1-\gamma(1+\epsilon))M_n^{\delta}}t\\
 &\quad\quad\quad\quad\quad\quad+C\int_0^t \mathbb{E} \left[\displaystyle \limsup_{k}\left(\frac{1}{N_k}\sum_{i\in\mathfrak{S}_{N_k}}\sup_{v\in[0;s]}\left| A_{n,i}^{N_k}(v)-A_{i}^{N_k}(v)\right|\mathbf{1}_{\|X^i\|\leq M_n}\right)\right]ds\,.
 \end{align*}
 Using Gronwall's inequality, we obtain
 \begin{align}
  \mathbb{E}\left[\displaystyle\limsup_{k}\left(\displaystyle\frac{1}{N_k}\sum_{i\in\mathfrak{S}_{N_k}}\sup_{s\in[0;t]}\left| A^{N_k}_{n,i}(s)-A^{N_k}_{i}(s)\right|\mathbf{1}_{\|X^i\|\leq M_n}\right)\right]&\leq\left( C e^{-a(\frac{1}{1+\epsilon}-\gamma)M_n^{\delta}} t+ C e^{-a(1-\gamma(1+\epsilon))M_n^{\delta}}t\right)e^{Ct}\nonumber.
 \end{align}
 Thus, since $\epsilon$ has been chosen such that $(1+\epsilon)\gamma<1$,
\begin{align*}
\mathbb{E}\left[\displaystyle\limsup_{k}\left(\displaystyle\frac{1}{N_k}\sum_{i\in\mathfrak{S}_{N_k}}\sup_{s\in[0;t]}\left| A^{N_k}_{n,i}(s)-A^{N_k}_{i}(s)\right|\mathbf{1}_{\|X^i\|\leq M_n}\right)\right] \rightarrow 0,\quad \text{as} \quad n\rightarrow \infty.
\end{align*}
\end{proof}

\vfill
\eject

{\it Completing the proof of Theorem \ref{theo1}}

\smallskip

We  first  show that   $(\overline{\mu}^{S,N_k}, \overline{\mu}_t^{\mathfrak{F},N_k},\overline{\mu}_t^{I,N_k},\overline{\mu}_t^{R,N_k},  t\in \R_+)_{k \geq 1}$ converges in probability in $\mathfrak{D}^4_{\mathcal{M}},$ uniformly in $t$   to  $(\overline{\mu}_t^{S}, \overline{\mu}_t^{\mathfrak{F}},\overline{\mu}_t^{I},\overline{\mu}_t^{R}, t\in \R_+)$.\\
For all $t\in[0;T]$, we have, exploiting Lemma \ref{Lem imp2}
\begin{align}
\left|( \overline{\mu}_{t}^{S,N_k},\varphi )-( \overline{\mu}_{t}^{S},\varphi )\right|&\leq\left|( \overline{\mu}_{t}^{S,N_k}- \overline{\mu}_{t}^{S},\varphi \mathbf{1}_{B(0;M_n) })\right|+\left|( \overline{\mu}_{t}^{S,N_k}- \overline{\mu}_{t}^{S},\varphi \mathbf{1}_{B^c(0;M_n) })\right|\nonumber\\
&\leq \left|( \overline{\mu}_{t}^{S,N_k}- \overline{\mu}_{t}^{S},\varphi \mathbf{1}_{B(0;M_n) })\right|+\|\varphi\|_{\infty}\left|( \overline{\mu}^{N_k}, \mathbf{1}_{B^c(0;M_n) })\right|+\|\varphi\|_{\infty}\left|( \overline{\mu}, \mathbf{1}_{B^c(0;M_n) })\right|\nonumber\\
\label{pr1eq1}
\left|( \overline{\mu}_{t}^{S,N_k},\varphi )-( \overline{\mu}_{t}^{S},\varphi )\right|&\leq  \Lambda (t,N_k,n)+\|\varphi\|_{\infty}\left|( \overline{\mu}^{N_k}, \mathbf{1}_{B^c(0;M_n) })\right|+\|\varphi\|_{\infty}\left|( \overline{\mu}, \mathbf{1}_{B^c(0;M_n) })\right|
\end{align}
\begin{align}
  \Lambda (t,N_k,n) &\leq \left|( \overline{\mu}_{t}^{S,N_k}- \overline{\mu}_{n,t}^{S,N_k},\varphi \mathbf{1}_{B(0;M_n) })\right|+\left|(  \overline{\mu}_{n,t}^{S,N_k}- \overline{\mu}_{n,t}^{S},\varphi \mathbf{1}_{B(0;M_n) })\right|+\left|(  \overline{\mu}_{n,t}^{S}- \overline{\mu}_{t}^{S},\varphi \mathbf{1}_{B(0;M_n) })\right|\nonumber\\
  &\leq \|\varphi\|_{\infty}\left|( \overline{\mu}_{t}^{S,N_k}- \overline{\mu}_{n,t}^{S,N_k},\mathbf{1}_{B(0;M_n) })\right|+\left|(  \overline{\mu}_{n,t}^{S,N_k}- \overline{\mu}_{n,t}^{S},\varphi )\right|+\left|(  \overline{\mu}_{n,t}^{S}- \overline{\mu}_{t}^{S},\varphi )\right|\nonumber\\
  &\leq \frac{\|\varphi\|_{\infty}}{N_k}\sum_{i\in \mathfrak{S}_{N_k}}\mathbf{1}_{\|X^i\|>M_n}+\frac{\|\varphi\|_{\infty}}{N_k}\sum_{i\in\mathfrak{S}_{N_k}}\left|A_{n,i}^{N_k}(t)-A_{i}^{N_k}(t)\right|\mathbf{1}_{\|X^i\|\leq M_n}\nonumber\\
  \label{pr1eq2}
  &\quad+\left|(  \overline{\mu}_{n,t}^{S,N_k}- \overline{\mu}_{n,t}^{S},\varphi )\right|+\left|(  \overline{\mu}_{n,t}^{S}- \overline{\mu}_{t}^{S},\varphi )\right|
\end{align}
By combining \eqref{pr1eq1} and \eqref{pr1eq2}, we obtain
\begin{align*}
\left|( \overline{\mu}_{t}^{S,N_k},\varphi )-( \overline{\mu}_{t}^{S},\varphi )\right|&\leq \frac{\|\varphi\|_{\infty}}{N_k}\sum_{i\in \mathfrak{S}_{N_k}}\mathbf{1}_{\|X^i\|>M_n}+\frac{\|\varphi\|_{\infty}}{N_k}\sum_{i\in\mathfrak{S}_{N_k}}\left|A_{n,i}^{N_k}(t)-A_{i}^{N_k}(t)\right|\mathbf{1}_{\|X^i\|\leq M_n}\\
&\quad+\left|(  \overline{\mu}_{n,t}^{S,N_k}- \overline{\mu}_{n,t}^{S},\varphi )\right|+\left|(  \overline{\mu}_{n,t}^{S}- \overline{\mu}_{t}^{S},\varphi )\right|\\
&\quad+\|\varphi\|_{\infty}\left|( \overline{\mu}^{N_k}, \mathbf{1}_{B^c(0;M_n) })\right|+\|\varphi\|_{\infty}\left|( \overline{\mu}, \mathbf{1}_{B^c(0;M_n) })\right|
\end{align*}
From Lemma \ref{Lem imp2} and Theorem \ref{Thé1}, we deduce that
\begin{align}
\label{pr1eq3}
\mathbb{E}&\left[\limsup_{k}\left(\sup_{t\in[0;T]}\left|( \overline{\mu}_{t}^{S,N_k}-\overline{\mu}_{t}^{S},\varphi ) \right|\right)\right]\leq \sup_{t\in[0;T]}\left|(  \overline{\mu}_{n,t}^{S}- \overline{\mu}_{t}^{S},\varphi )\right|+3\|\varphi\|_{\infty}\left|( \overline{\mu}, \mathbf{1}_{B^c(0;M_n) })\right|\nonumber\\
&\quad+ \mathbb{E}\left[\displaystyle\limsup_{k}\left(\displaystyle\frac{1}{N_k}\sum_{i\in\mathfrak{S}_{N_k}}\sup_{t\in[0;T]}\left| A^{N_k}_{n,i}(s)-A^{N_k}_{i}(s)\right|\mathbf{1}_{\|X^i\|\leq M_n}\right)\right]\xrightarrow[n \to \infty]{} 0
\end{align}
We next consider the other measures.
\begin{align*}
\left|( \overline{\mu}_{t}^{I,N_k},\varphi )-( \overline{\mu}_{t}^{I},\varphi )\right|&\leq \frac{\|\varphi\|_{\infty}}{N_k}\sum_{i\in \mathfrak{I}_{N_k}}\mathbf{1}_{\|X^i\|>M_n}+\frac{\|\varphi\|_{\infty}}{N_k}\sum_{i\in\mathfrak{S}_{N_k}}\left|A_{n,i}^{N_k}(t)-A_{i}^{N_k}(t)\right|\mathbf{1}_{\|X^i\|\leq M_n}\\
&\quad+\left|(  \overline{\mu}_{n,t}^{I,N_k}- \overline{\mu}_{n,t}^{I},\varphi \mathbf{1}_{B(0;M_n)})\right|
  +\left|(  \overline{\mu}_{n,t}^{I}- \overline{\mu}_{t}^{I},\varphi \mathbf{1}_{B(0;M_n)})\right|\\
  &\quad+\|\varphi\|_{\infty}\left|( \overline{\mu}^{N_k}, \mathbf{1}_{B^c(0;M_n) })\right|+\|\varphi\|_{\infty}\left|( \overline{\mu}, \mathbf{1}_{B^c(0;M_n) })\right|\\
  \left|( \overline{\mu}_{t}^{R,N_k},\varphi )-( \overline{\mu}_{t}^{R},\varphi )\right|&\leq \frac{\|\varphi\|_{\infty}}{N_k}\sum_{i\in \mathfrak{R}_{N_k}}\mathbf{1}_{\|X^i\|>M_n}+\frac{\|\varphi\|_{\infty}}{N_k}\sum_{i\in \mathfrak{I}_{N_k}}\mathbf{1}_{\|X^i\|>M_n}\\
  &\quad+\frac{\|\varphi\|_{\infty}}{N_k}\sum_{i\in\mathfrak{S}_N}\left|A_{n,i}^{N_k}(t)-A_{i}^{N_k}(t)\right|\mathbf{1}_{\|X^i\|\leq M_n}\nonumber\\
  &\quad+\left|(  \overline{\mu}_{n,t}^{R,N_k}- \overline{\mu}_{n,t}^{R},\varphi  \mathbf{1}_{B(0;M_n)})\right|
  +\left|(  \overline{\mu}_{n,t}^{R}- \overline{\mu}_{t}^{R},\varphi \mathbf{1}_{B(0;M_n)})\right|\\
  &\quad+\|\varphi\|_{\infty}\left|( \overline{\mu}^{N_k}, \mathbf{1}_{B^c(0;M_n)})\right|
  +\|\varphi\|_{\infty}\left|( \overline{\mu}, \mathbf{1}_{B^c(0;M_n) })\right|\\
  \left|( \overline{\mu}_{t}^{\mathfrak{F},N_k}-\overline{\mu}_{t}^{\mathfrak{F}},\varphi ) \right|&\leq\frac{\lambda^*\|\varphi\|_{\infty}}{N_k}\sum_{i\in \mathfrak{I}_{N_k}}\mathbf{1}_{\|X^i\|>M_n}+\frac{\lambda^*\|\varphi\|_{\infty}}{N_k}\sum_{i\in\mathfrak{S}_{N_k}}\left|A_{n,i}^{N_k}(t)-A_{i}^{N_k}(t)\right|\mathbf{1}_{\|X^i\|\leq M_n}\\
  &\quad+\left|(  \overline{\mu}_{n,t}^{\mathfrak{F},N_k}- \overline{\mu}_{n,t}^{\mathfrak{F}},\varphi \mathbf{1}_{B(0;M_n)})\right|
  +\left|(  \overline{\mu}_{n,t}^{\mathfrak{F}}- \overline{\mu}_{t}^{\mathfrak{F}},\varphi \mathbf{1}_{B(0;M_n)})\right|\\
  &\quad+\lambda^* \|\varphi\|_{\infty}\left|( \overline{\mu}^{N_k}, \mathbf{1}_{B^c(0;M_n) })\right|+\lambda^*\ \|\varphi\|_{\infty}\left|( \overline{\mu}, \mathbf{1}_{B^c(0;M_n) })\right|
\end{align*}

Thus \eqref{pr1eq3} holds with $\overline{\mu}_{t}^{S,N_k}-\overline{\mu}_{t}^{S}$ replaced by 
$\overline{\mu}_{t}^{\mathfrak{F},N_k}-\overline{\mu}_{t}^{\mathfrak{F}}$, $\overline{\mu}_{t}^{I,N_k}-\overline{\mu}_{t}^{I}$ and $\overline{\mu}_{t}^{R,N_k}-\overline{\mu}_{t}^{R}$, and all those quantities tend to $0$ in probability in $\mathfrak{D}_{\mathcal{M}}$.

The above arguments show that from any subsequence of the original sequence, we can further extract a subsequence $\{N_k,\ k\ge1\}$ such that  $( \overline{\mu}^{S,N_k}, \overline{\mu}^{\mathfrak{F},N_k},  \overline{\mu}^{I,N_k},  \overline{\mu}^{R,N_k} )$ converges in probability in $\mathfrak{D}_{\mathcal{M}}^4$ to $(  \overline{\mu}^S,  \overline{\mu}^{\mathfrak{F}} ,  \overline{\mu}^I,  \overline{\mu}^R)$. This clearly implies that the whole sequence converges, hence Theorem \ref{theo1}  is established.

\end{document}